\newtheorem{theorem}{Theorem}[section]
\newtheorem{lemma}[theorem]{Lemma}
\newtheorem{proposition}[theorem]{Proposition}
\newtheorem{claim}[theorem]{Claim}
\newtheorem{corollary}[theorem]{Corollary}
\numberwithin{equation}{section}
\theoremstyle{definition}
\theoremstyle{example}
\theoremstyle{remark}
\newtheorem{remark}[theorem]{Remark}
\DeclareMathOperator{\sys}{sys}
\DeclareMathOperator{\arccosh}{arccosh}
\DeclareMathOperator{\pmodo}{PMod}
\DeclareMathOperator{\modo}{Mod}
\definecolor{mygreen}{RGB}{28,172,0} 
\definecolor{mylilas}{RGB}{170,55,241}
\begin{document}

\lstset{language=Matlab,
    breaklines=true,
    morekeywords={matlab2tikz},
    keywordstyle=\color{blue},
    morekeywords=[2]{1}, keywordstyle=[2]{\color{black}},
    identifierstyle=\color{blue},
    stringstyle=\color{mylilas},
    commentstyle=\color{mygreen},
    showstringspaces=false,
    numbers=left,
    numberstyle={\tiny \color{black}},
    numbersep=9pt, 
	xleftmargin=1cm
}

\title{Thurston spine in a Teichm\"uller curve}
\author{Yue Gao}
\address{School of Mathematics and Statistics, Anhui Normal University, Wuhu, Anhui, China}
\email{yuegao@ahnu.edu.cn}

\author{Zhongzi Wang}
\address{School of Mathematical Sciences, Peking University, Beijing, China}
\email{wangzz22@stu.pku.edu.cn}

\date{}

\maketitle

\begin{abstract}
 To study the Thurston spine $\mathcal{P}_g \subseteq \mathcal{T}_g$, we construct a Teichm\"uller curve $V \subseteq \mathcal{T}_g$. Then we characterize $V \cap \mathcal{P}_g$. More specifically, we show it is a trivalent tree and is an equivariant deformation retract of $V$. 
 Moreover, by our construction, a lot of essential loops in the Thurston spine, both reducible and pseudo-Anosov, are obtained. 
\end{abstract}

\tableofcontents

\addtocontents{toc}{\protect\setcounter{tocdepth}{1}}

\section{Introduction}

On a hyperbolic surface $X$, the shortest closed geodesic is called the \emph{systole} of $X$. 
Systole plays a role not only in the geometry of $X$, but also in the geometry and topology of the spaces of all the genus $g$ hyperbolic surfaces, namely the moduli space of hyperbolic surfaces $\mathcal{M}_g$ and the Teichm\"uller space $\mathcal{T}_g$, see \emph{e.g.} \cite{mumford1971remark, schaller1999systoles, akrout2003singularites, bourque2021local, wu2024systole}. 
In particular, surfaces with filling systole are of specific interest. A set of geodesics in $X$ is \emph{filling} if the geodesics cut the surface into disks. In an unpublished manuscript \cite{thurston1986spine}, Thurston conjectured that in the moduli space $\mathcal{M}_g$, the subspace of all the surfaces with filling systole is a deformation retract of $\mathcal{M}_g$. In other words, in the Teichm\"uller space $\mathcal{T}_g$, the subspace of all the surfaces with filling systole is an equivariant deformation retract of $\mathcal{T}_g$ with respect to the action of the mapping class group. This subspace was later called \emph{Thurston spine}. One may denote the Thurston spine in $\mathcal{T}_g$ as $\mathcal{P}_g$. 
Today, Thurston's conjecture remains open. Over the years, a few results on the topology and geometry of Thurston spine were obtained, for example, a codimension-$2$ deformation retract containing $\mathcal{P}_g$ \cite{ji2014well}; local properties of $\mathcal{P}_g$ \cite{irmer2025morse}; cells in $\mathcal{P}_g$ with high dimensions \cite{bourque2020hyperbolic, fortier2024dimension, irmer2023small, mathieu2023estimating}. For the geometric aspect, see \emph{e.g.} \cite{anderson2016relative, gao2024shape}.

We consider Thurston's conjecture in a specific Teichm\"uller curve. A Teichm\"uller curve is a totally-geodesic embedding $i:\mathbb{H}^2\to \mathcal{T}_g$ such that $i(\mathbb{H}^2)$ covers an algebraic curve in $\mathcal{M}_g$. 
The Teichm\"uller curve we consider consists of the surfaces admitting an order-$(g+1)$ rotation. 
Induced by the rotation, it is treated as the embedding of the Teichm\"uller space of $4$-coned spheres $\mathcal{T}_{0,4} \cong \mathbb{H}^2$ into $\mathcal{T}_g$. One may denote this embedding as $\pi^{*}:\mathcal{T}_{0,4} \to \mathcal{T}_g$. 
For $\mathcal{T}_{0,4}$, the moduli space $\mathcal{M}_{0,4}$ is a $3$-punctured sphere (see Figure \ref{fig:spine}).
The image of $\pi^{*}(\mathcal{T}_{0,4})$ in $\mathcal{M}_g$ is double-branched covered by $\mathcal{M}_{0,4}$, by an order-$2$ rotation exchanging two of the three punctures and fixing the last one (see Lemma \ref{lem:modo}). 
Our main theorem (Theorem \ref{thm:main}) describes $\pi^{*}(\mathcal{T}_{0,4}) \cap \mathcal{P}_g$, the intersection between the Teichm\"uller curve and the Thurston spine. Denote by $q$ the covering $\mathcal{T}_g \to \mathcal{M}_g$. We have that
\begin{theorem}[= Theorem \ref{thm:main}]
In $\mathcal{M}_g$, $q(\pi^{*}(\mathcal{T}_{0,4}) \cap \mathcal{P}_g)$ consists of a circle around a puncture and a segment joining the circle and the singular point of $q(\pi^{*}(\mathcal{T}_{0,4}))$. 
The preimage of $q(\pi^{*}(\mathcal{T}_{0,4}) \cap \mathcal{P}_g)$ in $\mathcal{M}_{0,4}$ consists of two circles and a segment joining them, as is illustrated in Figure \ref{fig:spine}. 

 \label{thm:main_intro}
\end{theorem}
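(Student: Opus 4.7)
The plan is to pull everything back to $\mathcal{T}_{0,4}$ via $\pi^{*}$ and exploit the order-$(g+1)$ rotational symmetry. Since the rotation acts by isometries on any $\pi^{*}(X_0)\in V$, the set of systoles of $\pi^{*}(X_0)$ is invariant under the rotation, hence comes in orbits whose projections to the quotient orbifold $X_0$ are finitely many geodesic arcs between cone points (and possibly loops around a single cone point). So the first step is to classify, once and for all, the finitely many homotopy classes of geodesic arcs on the $4$-coned sphere whose lifts to $\pi^{*}(X_0)$ can be systoles; these are the short arcs connecting pairs of cone points, plus loops enclosing pairs of them.

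Next I would choose coordinates on $\mathcal{T}_{0,4}\cong\mathbb{H}^2$ adapted to the symmetries. The three punctures of $\mathcal{M}_{0,4}$ correspond to the three pair-partitions of the four cone points; by Lemma \ref{lem:modo}, the relevant $\mathbb{Z}/2$ action permutes two of them and fixes the third, and the singular point of $q(V)$ is the image of the fixed locus. For each cone-point pair I would write the length of the corresponding candidate geodesic as an explicit function of the modulus (using standard hyperbolic trigonometry for right-angled hexagons/pentagons in the orbifold), and then look for the locus in $\mathcal{T}_{0,4}$ where two or more such length functions simultaneously achieve the minimum. The $\mathbb{Z}/2$ symmetry forces a $1$-dimensional fixed arc in $\mathcal{T}_{0,4}$ along which two candidate lengths agree; I expect that a portion of this fixed arc is precisely where the two systolic orbits together fill the genus-$g$ surface, giving the segment of the statement, and that around the puncture fixed by $\mathbb{Z}/2$ another one-parameter family arises where a pair of orbits exchanged by the symmetry jointly fill, which descends to the circle in $q(V)$.

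The combinatorial side — checking that a given equal-length configuration of systolic orbits is actually filling upstairs — is handled symmetry-theoretically: once one short arc upstairs has its $(g{+}1)$-orbit drawn, verifying filling reduces to checking that the complementary regions in the orbifold $X_0$ are disks, which is a small finite check for each of the candidate configurations. This also shows that the surfaces lying over $\mathcal{M}_{0,4}\setminus(\text{two circles}\cup\text{segment})$ have systolic set supported on a single orbit, hence do not fill, so the complement of our locus is disjoint from $\mathcal{P}_g$.

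The main obstacle will be the explicit hyperbolic-trigonometric comparison of the candidate systole lengths: one must verify that along the proposed circles and segment the two (respectively three) length functions realizing the minimum are indeed the global minimum, i.e.\ that no other candidate arc becomes shorter. The most delicate point is the endpoint of the segment at the singular point of $q(V)$, where three candidate lengths must coincide and the transition between the ``segment'' regime and the ``circle'' regime happens; there a careful local analysis of the length functions near the fixed locus is needed to ensure the locus closes up as claimed in Figure \ref{fig:spine}, rather than breaking into disconnected pieces.
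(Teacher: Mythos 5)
Your outline matches the paper's overall strategy (classify candidate systoles by symmetry, compare length functions, locate the equal-length loci, verify filling), but it glosses over three steps that in fact require specific nontrivial input rather than routine computation.

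First, the fundamental domain of the Teichm\"uller curve has to be nailed down explicitly in Fenchel--Nielsen coordinates before the length comparison can even begin, and this is where most of the technical work in the paper lives. The paper identifies it as an ideal quadrilateral $F$ bounded by four Teichm\"uller geodesics $L_{\pm 1}$, $L^{\alpha}_{\pm 1}$; showing that each curve is a geodesic is easy (each is a fixed-point set of an isometry), but establishing that they are pairwise disjoint and share ideal vertices uses Minsky's product regions theorem to prove that the rays $L_{-1}|_{c\le 1}$, $L_0|_{c\le 1}$, $L_1|_{c\le 1}$ converge to the same point of $\partial_\infty\mathbb{H}^2$ (Lemma \ref{lem:limit}). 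Your proposal takes the shape of the fundamental domain for granted.

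Second, your candidate classification lives on the quotient $O = X/\langle\rho\rangle$, but the rotation alone does not yield a finite candidate list. The paper instead passes to the finer quotient $O_3 = X/\langle\rho,\tau,\sigma\rangle$ (three order-$2$ cone points, one order-$(g+1)$ cone point) and uses the fact that two systoles intersect at most once to reduce to exactly four families $\alpha,\beta,\gamma,\delta$ (Lemma \ref{lem:family_sys_cand}, Proposition \ref{prop:four_sys_cand}). The fourth candidate $\delta$ is not present in your list and does not drop out for free: eliminating it requires a quantitative estimate (Lemma \ref{lem:delta_not}), complicated by the fact that $\delta$ does \emph{not} lift isometrically --- one closed curve downstairs lifts to one or two geodesics upstairs wrapping several times --- so $\ell_{\delta_j}(X)$ is a multiple of the orbifold length, unlike $\alpha$, $\beta$, $\gamma$. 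Finally, you correctly flag the triple point where $\ell_\alpha=\ell_\beta=\ell_\gamma$ as delicate, but offer no mechanism for proving it is unique; the paper shows uniqueness via an earthquake-flow argument invoking Kerckhoff's strict convexity of length functions (Proposition \ref{prop:unique}), and without this there is no way to conclude the spine is exactly one segment and one circle as in Figure \ref{fig:spine} rather than something larger or disconnected.
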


A direct corollary to our main theorem is 
\begin{corollary}[= Corollary \ref{cor:conj}]
	For the Teichm\"uller curve $\pi^{*}(\mathcal{T}_{0,4})$ and the Thurston spine $\mathcal{P}_g$, the intersection  $\pi^{*}(\mathcal{T}_{0,4}) \cap \mathcal{P}_g$ is an equivariant deformation retract of $\pi^{*}(\mathcal{T}_{0,4})$. 
 \label{cor:conj_intro}
\end{corollary}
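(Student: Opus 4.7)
The plan is to construct the deformation retract downstairs in the quotient $q(\pi^{*}(\mathcal{T}_{0,4}))$ and then lift it to $\pi^{*}(\mathcal{T}_{0,4}) \cong \mathbb{H}^2$. Recall from the discussion preceding Theorem \ref{thm:main_intro} that $q(\pi^{*}(\mathcal{T}_{0,4}))$ is an orbifold whose underlying space is a twice-punctured sphere with a single order-$2$ cone point (the singular point), and that the covering $\pi^{*}(\mathcal{T}_{0,4}) \to q(\pi^{*}(\mathcal{T}_{0,4}))$ is by the stabilizer $\Gamma \subset \modo_g$ of the Teichm\"uller curve. By Theorem \ref{thm:main_intro}, the image in this quotient of $\pi^{*}(\mathcal{T}_{0,4}) \cap \mathcal{P}_g$ is a graph $G$ consisting of a simple loop $L$ around one of the two punctures together with an arc $\alpha$ joining $L$ to the cone point.

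First I would verify that $G$ is a topological deformation retract of $q(\pi^{*}(\mathcal{T}_{0,4}))$. The loop $L$ divides the sphere into two disks, so the complement of $G$ in the underlying surface splits into two open components: the interior of $L$ with its enclosed puncture removed, which is visibly an open annulus, and the exterior of $L$ slit along $\alpha$ with the second puncture removed, which a short topological argument (e.g.\ Alexander duality applied to the compact contractible set $\bar{L}\cup\alpha\cup\{\text{enclosed puncture}\}\subset S^2$) shows is also an open annulus. Each component has one end at a puncture and one end on $G$, so each admits an evident radial deformation retract onto its $G$-end; patching these yields a deformation retract $H$ of $q(\pi^{*}(\mathcal{T}_{0,4}))$ onto $G$.

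Then I would lift. Pulling $H$ back through the covering $\pi^{*}(\mathcal{T}_{0,4}) \to q(\pi^{*}(\mathcal{T}_{0,4}))$ produces a deformation retract of $\pi^{*}(\mathcal{T}_{0,4})$ onto the preimage of $G$; by Theorem \ref{thm:main_intro} this preimage is precisely $\pi^{*}(\mathcal{T}_{0,4}) \cap \mathcal{P}_g$. Because $H$ is defined on the quotient, its lift is $\Gamma$-equivariant by construction, which is the required equivariance.

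The one subtle point is the gluing on the slit annulus component: the radial retractions on its two sides, separated by the slit along $\alpha$, must match continuously across $\alpha$ at the cone point. Equivalently, on the double branched cover $\mathcal{M}_{0,4}$ (where $G$ lifts to two loops joined by an arc passing through the smooth fixed point of the covering involution $\tau$), one must pick a $\tau$-equivariant radial retraction. This is arranged by choosing $\tau$-symmetric collar coordinates near the fixed smooth point and defining the retraction symmetrically there; it is essentially bookkeeping and presents no genuine difficulty.
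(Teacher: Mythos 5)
The paper offers no explicit argument for this corollary — it follows Theorem \ref{thm:main} with the words ``one may see that'' and leans on Figure \ref{fig:spine} — so your proposal is supplying details the authors leave implicit. Your route (read off from Theorem \ref{thm:main} that the spine downstairs is a loop $L$ around one puncture plus an arc to the order-$2$ cone point, observe the complement consists of two annuli each with one end on a puncture and one on $G$, retract each radially, then lift) is the natural reading of the figure and it is correct. The one place where care is genuinely needed is the one you flag: the quotient $q(\pi^{*}(\mathcal{T}_{0,4}))$ is an orbifold, so a homotopy defined on its underlying space does not automatically pull back through the branch point; your fix of passing to $\mathcal{M}_{0,4}$ (a genuine manifold on which $G$ lifts to the two-circles-plus-segment picture of the theorem), building the retraction there to be equivariant for the covering involution, and only then lifting through the now-unbranched cover $\mathcal{T}_{0,4}\to\mathcal{M}_{0,4}$ with deck group $\langle D_\alpha,D_\gamma\rangle$, is exactly the right move: the resulting lift is $\langle D_\alpha,D_\gamma\rangle$-equivariant by construction and $f$-equivariant by uniqueness of lifted homotopies. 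Two small notational slips: the covering involution you call $\tau$ is the paper's $\bar{f}$ (the paper reserves $\tau$ for the hyperelliptic involution on the surface $X$), and the Alexander-duality parenthetical should be applied to the closed disk bounded by $L$ union the arc $\alpha$, not to $\bar{L}\cup\alpha\cup\{\text{puncture}\}$ as written — but the annulus conclusion it is meant to justify is correct regardless.
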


\begin{figure}[htbp]
 \centering
 \includegraphics{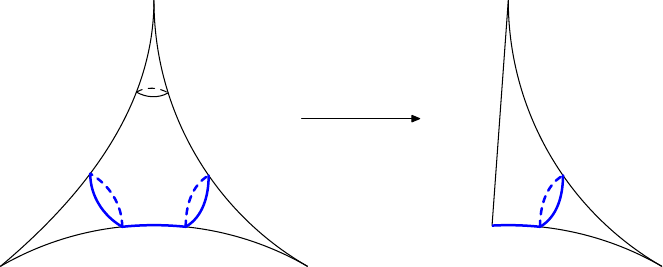}
 \caption{$q(\pi^{*}(\mathcal{T}_{0,4}) \cap \mathcal{P}_g)$ (right) and its preimage in $\mathcal{M}_{0,4}$ (left)}
 \label{fig:spine}
\end{figure}

By our construction, any element in a subgroup of the mapping class group $\modo_g$ is realized by an essential loop in $q(\mathcal{P}_g)$. This subgroup is a rank-$2$ free group and contains both reducible elements and pseudo-Anosov elements, see Corollary \ref{cor:loop}. To our knowledge, this is the first construction of an essential loop in $q(\mathcal{P}_g) \subseteq \mathcal{M}_g$.

The proof of Theorem \ref{thm:main} consists of two main new ingredients, describing a fundamental domain of the Teichm\"uller curve with respect to the Fenchel-Nielsen coordinate (Proposition \ref{prop:fundamental_curve}), classifying the systoles of the surfaces in this fundamental domain (Proposition \ref{prop:sys_cand}).

The aim of Section \ref{sec:fund_dom} is to describe a fundamental domain of the covering from $\pi^{*}(\mathcal{T}_{0,4}) \subseteq \mathcal{T}_g$ to its image in $\mathcal{T}_g/ \modo_g^{\pm}$ in terms of the Fenchel-Nielsen coordinates of $\mathcal{T}_{0,4}$, where $\modo^{\pm}_g$ is the extended mapping class group. 
This is divided into $2$ steps. First, we describe a fundamental domain of the action of the pure mapping class group $\pmodo_{0,4}$ on $\mathcal{T}_{0,4}$ (Proposition \ref{prop:fundamental}). Then we describe the covering from $\mathcal{M}_{0,4}\cong\mathcal{T}_{0,4}/\pmodo_{0,4}$ to the image of the concerned Teichm\"uller curve in $\mathcal{T}_g/\modo_g^{\pm}$ (Lemma \ref{lem:modo}). 
By Royden's theorem \cite{royden1971automorphisms}, the Teichm\"uller metric on $\mathcal{T}_{0,4}$ is the hyperbolic metric on $\mathbb{H}^2$, and the concerned groups act isometrically on $\mathcal{T}_{0,4}$. 
The main difficulty is that on $\mathcal{T}_{0,4}\cong \mathbb{H}^2$, we do not know the correspondence between the Fenchel-Nielsen coordinates and the coordinate of the hyperbolic plane. 

\begin{figure}[htbp]
 \centering
 \includegraphics{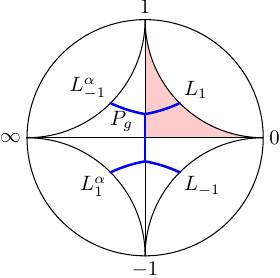}
 \caption{$\mathcal{P}_g$ in $\pi^{*}(F)$, the shaded part is $\pi^{*}(F_0)$}
 \label{fig:fund_dom_2}
\end{figure}

To obtain Proposition \ref{prop:fundamental}, one may take two pairs of curves in $\mathcal{T}_{0,4}$ described by the Fenchel-Nielsen coordinates, paired by the two generators of $\pmodo_{0,4}$ respectively. 
These two pairs are the $(L_{-1}, L_1)$ and $(L_{-1}^{\alpha}, L_1^{\alpha})$ illustrated in Figure \ref{fig:fund_dom_2}. 
One may show they are disjoint Teichm\"uller geodesics (Lemma \ref{lem:geod} and \ref{lem:disjoint}), bounding an ideal quadrilateral, which is the desired fundamental domain, denoted as $F$ and illustrated in Figure \ref{fig:fund_dom_2}. The most crucial ingredient is to show that these geodesics bound a quadrilateral. It depends on Lemma \ref{lem:limit}, implying that there is a geodesic $L_0$, whose one end is shared with $L_{-1}$ and $L_1$, and whose the other end is shared with $L_{-1}^{\alpha}$ and $L_1^{\alpha}$, and moreover, $L_0$ separates $L_1$, $L_{-1}$ and separates $L_1^{\alpha}$, $L_{-1}^{\alpha}$ respectively. The key ingredient to prove this key lemma is Minsky's product region theorem \cite{minsky1996extremal}.

In the second part of Section \ref{sec:fund_dom}, we prove Lemma \ref{lem:modo} by picking out the automorphisms of $\mathcal{M}_{0,4}$ that induce automorphisms of the concerned Teichm\"uller curve and thus prove Proposition \ref{prop:fundamental_curve}. The fundamental domain in Proposition \ref{prop:fundamental_curve} is the shaded triangle in Figure \ref{fig:fund_dom_2} and is denoted as $\pi^{*}(F_0)$. 

In Proposition \ref{prop:sys_cand}, the systoles on surfaces in $\pi^{*}(F_0)\cap \mathcal{P}_g$ are classified into $3$ multi-geodesics $\alpha$, $\beta$, and $\gamma$.
The proof is in $3$ steps. First, we show 
for any surface 
in the Teichm\"uller curve $\pi^{*}(\mathcal{T}_{0,4})$, its systole is among $4$ families of geodesics, and each family contains at most one systole up to the surface's symmetry (Lemma \ref{lem:family_sys_cand}) by taking advantage of the surface's symmetry. 
Then we restrict our discussion to the fundamental domain $\pi^{*}(F_0)$. We show that for any $X\in \pi^{*}(F_0)\cap \mathcal{P}_g$, its systole are among $4$ multi-geodesics $\alpha$, $\beta$, $\gamma$ and $\delta$ (Proposition \ref{prop:four_sys_cand}). 
At last, by a length comparison, we show $\delta$ is not a systole for surfaces in $\pi^{*}(F_0)\cap \mathcal{P}_g$ (Lemma \ref{lem:delta_not}), hence prove Proposition \ref{prop:sys_cand}. 

With the above preparations, we are ready to prove the main theorem. In $\pi^{*}(F_0)$, two curves are constructed consisting of the points with $\ell_{\beta} = \ell_{\gamma}$ and $\ell_{\alpha} = \ell_{\gamma}$ respectively in Lemma \ref{lem:u_implicit} and Lemma \ref{lem:u0_implicit}. These curves are treated as graphs of functions in terms of the Fenchel-Nielsen coordinate $\left( c, u\overset{\mathrm{def}}{=} \frac{t}{c} \right) $. For any point in subarcs $\ell_{\beta} = \ell_{\gamma} \le \ell_{\alpha}$ and $\ell_{\alpha} = \ell_{\gamma} \le \ell_{\beta}$, the surface has filling systoles hence is contained in $\mathcal{P}_g$. Using the implicit function theorem and the uniqueness of points in $\pi^{*}(F_0)$ with $\ell_{\alpha} = \ell_{\beta} = \ell_{\gamma} $ (Proposition \ref{prop:unique}), one may show these two arcs are connected and $\mathcal{P}_g\cap \pi^{*}(F_0)$ consists of these two arcs, which is the main theorem. 

In Section \ref{sec:pre}, the concerned surfaces are constructed and the Teichm\"uller curve is described in terms of its Fenchel-Nielsen coordinates. In Section \ref{sec:fund_dom}, Proposition \ref{prop:fundamental_curve} is proved and in Section \ref{sec:sys_cand}, Proposition \ref{prop:sys_cand} is proved. In Section \ref{sec:main}, the main theorem is proved. 

\subsection*{Acknowledgement} We would thank Prof. Kasra Rafi for helpful communications. 
Gao is supported by NSFC grant 12301082.

\addtocontents{toc}{\protect\setcounter{tocdepth}{2}}

\section{Basic construction} \label{sec:pre}
In this section, we construct the concerned surfaces admitting the order-$(g+1)$ rotation, then describe the Teichm\"uller curve consisting of all these surfaces in two Fenchel-Nielsen coordinates and give the coordinate change formulae between them. In the last part of this section, we introduce the Minsky product region theorem and the Gromov boundary in the special cases that are needed in the proof of Section \ref{sec:fund_dom}.
\subsection{Surface construction and symmetry}
In this subsection, we recall the construction of the concerned hyperbolic surface family, which was  originally constructed in \cite{gao2023maximal}, and was influenced by the construction of the symmetric topological surfaces in \cite{wang2015embedding}. For convenience, some figures in Section \ref{sub:family_cand}, \ref{sub:cand} are modified from figures in \cite{gao2023maximal}.

Let $n \ge 3$, 
take two isometric right-angled $2n$-gon admitting an order- $n$ rotation. These two $2n$-gons can be glued into an $n$-holed sphere with geodesic boundary by gluing their edges alternatively. One may call a boundary geodesic a \emph{cuff} and a glued edge a \emph{seam}.
This $n$-holed sphere still admits the order-$n$ rotation. 
By the rotation symmetry, every cuff has the same length and every seam has the same length. The geometry of the $n$-holed sphere is determined by its cuff length (or equivalently, its seam length).

Pick two copies of the $n$-holed spheres and glue them by pairing their cuffs. The requirements for the gluing are $(1)$ a cuff from one $n$-holed sphere is paired with a cuff from the other $n$-holed sphere; $(2)$ the order-$n$ rotation symmetry of an $n$-holed sphere can be extended to an order-$n$ rotation symmetry on the closed surface. Thus we obtain a genus $g=n-1$ hyperbolic surface $X$ admitting an order-$(g+1)$ rotation $\rho$. An example of $(X, \rho)$ is illustrated in Figure \ref{fig:exp_1_2}, where the endpoints of seams from one $n$-holed sphere are identified with the endpoints of seams from the other $n$-holed sphere. 

\begin{figure}[htbp]
 \centering
 \includegraphics{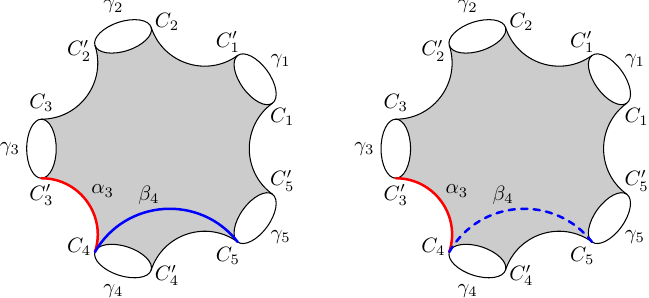}
 \caption{The surface $(X,\rho)$}
 \label{fig:exp_1_2}
\end{figure}

\subsection{Fenchel-Nielsen coordinate and the $\alpha$, $\beta$, $\gamma$ curves} \label{sub:fn}

The subspace in the Teichm\"uller space $\mathcal{T}_g$ of all the $(X, \rho)$ surfaces can be parametrized by a Fenchel-Nielsen coordinate $(c, t)$. 

Let $\gamma = \{ \gamma_1, \gamma_2, ..., \gamma_{g+1}\}$ be the cuffs of $X$. By the symmetry $\rho$, all cuffs have the same length. Let 
\[
 \ell_{\gamma_i} = 2c
\]
be the length parameter, where $i = 1,2,..., g+1$. The length is taken as $2c$ rather than $c$ for the convenience of following calculations. 

For the surface illustrated in Figure \ref{fig:exp_1_2} (denoted as $X_0$), the two seams joining $C'_iC_{i+1}$ in both $n$-holed spheres form a simple closed geodesic $\alpha_i$, for $i=1,2,...,g+1$ in $\mathbb{Z}/(g+1)\mathbb{Z}$. Let the twist parameter of $X_0$ be $0$, and the twist parameter of $D^t_{\gamma}(X_0)$ be $t$, where $D^t_{\gamma}$ is the Fenchel-Nielsen deformation along the multi-curve $\gamma$, with time $t$. 

Let $D_{\gamma_i}(\alpha_i)$ be the Dehn twist on $\alpha_i$ along $\gamma_i$. Define
\[
 \beta_i \overset{\mathrm{def}}{=} D_{\gamma_i}^{-1}(\alpha_i)
\]
for $i=1,2,..., g+1$. 

\subsection{Teichm\"uller space embedding}
For $(X,\rho)$ defined above, the quotient $O \overset{\mathrm{def}}{=}X/\langle \rho\rangle$ is an orbifold with spherical underlying space and four singular points of index $g+1$. 
Denote its Teichm\"uller space as $\mathcal{T}_{0,4}^g$. The orbifold covering
\[
 \pi: X \to O
\]
induces an embedding
\[
 \pi^{*}: \mathcal{T}_{0,4}^g \to \mathcal{T}_g. 
\]
The subspace of $\mathcal{T}_g$, consisting of surfaces admitting the  $\rho$-action is identified with $\pi^{*}(\mathcal{T}_{0,4}^g)$. The Fenchel-Nielsen coordinate defined in the above subsection is actually a Fenchel-Nielsen coordinate of  $\mathcal{T}_{0,4}^g$. One may omit the  '$g$' in $\mathcal{T}_{0,4}^g$ if not causing ambiguity. 

As illustrated in Figure \ref{fig:rotation} and \ref{fig:rotation_local}, for the orbifold covering $\pi:X \to O$, $\gamma_i$ is mapped to a simple closed geodesic $\gamma \subseteq O$, and the restriction $\pi|_{\gamma_i}$ is an isometry for $i =1, 2, ..., g+1$. Hence 
\[
 \ell_{\gamma_i}(X) = \ell_{\gamma}(O)
\]
for $i =1, 2, ..., g+1$. 
Similarly, $\alpha_X \overset{\mathrm{def}}{=} \{\alpha_1, \alpha_2,..., \alpha_{g+1} \}\subseteq X$ covers a simple closed geodesic $\alpha_O \subseteq O$, and 
\[
 \ell_{\alpha_i}(X) = \ell_{\alpha_O}(O). 
\]
for $i =1, 2, ..., g+1$. 
$\beta_X \overset{\mathrm{def}}{=} \{\beta_1, \beta_2,..., \beta_{g+1} \}\subseteq X$ covers a figure-eight closed geodesic $\beta_O \subseteq O$, and 
\[
 \ell_{\beta_i}(X) = \ell_{\beta_O}(O). 
\]
for $i =1, 2, ..., g+1$. 
One may omit the subscripts in $\alpha_X$, $\alpha_O$, $\beta_X$, $\beta_O$, if not causing ambiguity. 
    \begin{figure}[htbp]
 \centering
 \begin{subfigure}[htbp]{.45\textwidth}
 \begin{center}
  \includegraphics{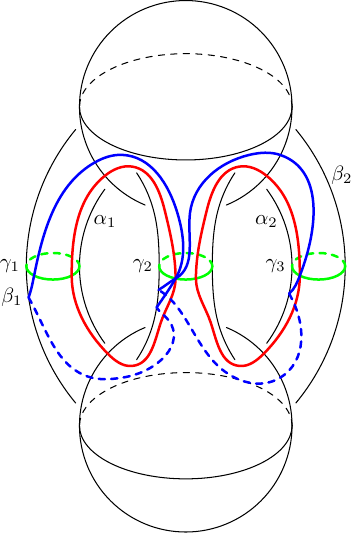}
 \end{center}
 \caption{The surface $X$}
 \label{fig:rotation_1}
 \end{subfigure}
 \begin{subfigure}[htbp]{.45\textwidth}
 \begin{center}
  \includegraphics{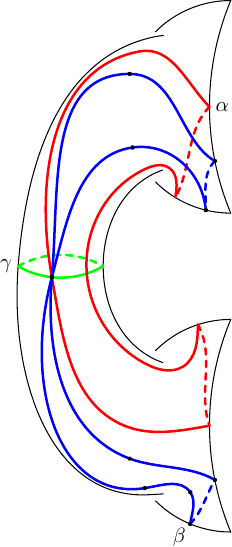}
 \end{center}
 \caption{The orbifold $O$}
 \label{fig:rotation_2}
 \end{subfigure}
 
 \caption{Curves $\alpha$, $\beta$, $\gamma$ in the surface}
 \label{fig:rotation}
\end{figure}

    \begin{figure}[htbp]
 \centering
 \begin{subfigure}[htbp]{.45\textwidth}
 \begin{center}
  \includegraphics{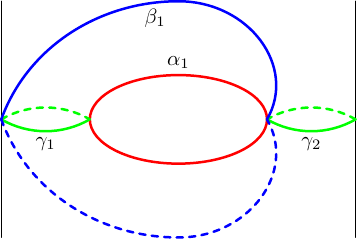}
 \end{center}
 \caption{The surface $X$}
 \label{fig:rotation_local_1}
 \end{subfigure}
 \begin{subfigure}[htbp]{.45\textwidth}
 \begin{center}
  \includegraphics{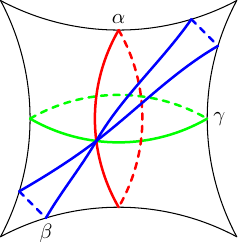}
 \end{center}
 \caption{The orbifold $O$}
 \label{fig:rotation_local_2}
 \end{subfigure}
 
 \caption{Curves $\alpha$, $\beta$, $\gamma$ in the surface (local picture)}
 \label{fig:rotation_local}
\end{figure}

On the orbifold $O = X / \langle \rho\rangle$, there is a simple closed geodesic (denoted as $\delta$), disjoint with $\beta$, see Figure \ref{fig:rotation_delta}. If $g$ is even, $\delta$ lifts to one simple closed geodesic $\delta_1$ in $X$ (see Figure \ref{fig:rotation_local_4} for the case $g=2$); if $g$ is odd, $\delta$ lifts to two simple closed geodesics $\delta_1$ and $\delta_2$ in $X$. 
With a little abuse of notation, on the surface $X$, one may let $\delta \overset{\mathrm{def}}{=} \{\delta_1\}$, if $g$ is even;  
$\delta \overset{\mathrm{def}}{=} \{\delta_1, \delta_2\}$, if $g$ is odd.  
    \begin{figure}[htbp]
 \centering
 \begin{subfigure}[htbp]{.45\textwidth}
 \begin{center}
  \includegraphics{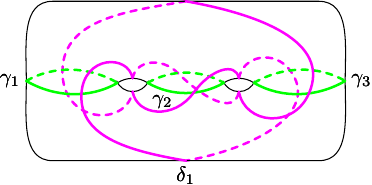}
 \end{center}
 \caption{The surface $X$}
 \label{fig:rotation_local_4}
 \end{subfigure}
 \begin{subfigure}[htbp]{.45\textwidth}
 \begin{center}
  \includegraphics{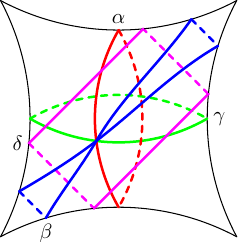}
 \end{center}
 \caption{The orbifold $O$}
 \label{fig:rotation_local_3}
 \end{subfigure}
 
 \caption{The curve $\delta$ in the surface}
 \label{fig:rotation_delta}
\end{figure}

\subsection{Mapping class group action}
The \emph{pure mapping class group} of a punctured surface consists of the mapping classes fixing each of its punctures.
For the $4$-punctured sphere, its pure mapping class group (denoted as $\pmodo_{0,4}$) is a rank-$2$ free group generated by the Dehn twists $D_{\alpha}$ and $D_{\gamma}$ (The curves $\alpha$ and $\gamma$ are illustrated in Figure \ref{fig:rotation_local_2}) \cite[Section 4.2.4]{farb2011primer}.

The moduli space $$\mathcal{M}_{0,4} \cong \mathcal{T}_{0,4} / \pmodo_{0,4}$$ is holomorphic to the $3$-punctured sphere $\hat{\mathbb{C}} \backslash \{0, 1, \infty\}$. By the uniformization theorem, there is a unique complete hyperbolic metric on the $3$-punctured sphere. By Royden's theorem, on the Teichm\"uller space, the Teichm\"uller metric coincides with the Kobayashi metric \cite[Theorem 3]{royden1971automorphisms}. On $\mathbb{H}^2$, the Kobayashi metric is the hyperbolic metric. Therefore
\begin{claim}
 The hyperbolic metric on the $3$-punctured sphere $\mathcal{M}_{0,4}$ is the Teichm\"uller metric on the moduli space $\mathcal{M}_{0,4}$. 
 \label{claim:dt_dk_dh}
\end{claim}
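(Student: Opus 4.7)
The plan is to transport Royden's identification along the biholomorphism $\mathcal{T}_{0,4}\cong \mathbb{H}^2$, descend to the quotient by $\pmodo_{0,4}$, and then appeal to uniqueness of the complete hyperbolic metric on the 3-punctured sphere.

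First I would observe that, since the Kobayashi pseudometric is a biholomorphic invariant and $\mathcal{T}_{0,4}\cong \mathbb{H}^2$, the Kobayashi metric on $\mathcal{T}_{0,4}$ corresponds under this biholomorphism to the Kobayashi metric on $\mathbb{H}^2$, which is the hyperbolic metric. Combined with Royden's theorem, this identifies $(\mathcal{T}_{0,4}, d_T)$ isometrically with $(\mathbb{H}^2, d_{\mathrm{hyp}})$, so the Teichm\"uller metric upstairs is already the hyperbolic metric under the given biholomorphism.

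Next I would descend to the moduli space. The group $\pmodo_{0,4}$ acts on $\mathcal{T}_{0,4}$ by biholomorphisms, hence, under the previous identification, by hyperbolic isometries on $\mathbb{H}^2$. This action is properly discontinuous and free, since $\mathcal{M}_{0,4}\cong \hat{\mathbb{C}}\setminus\{0,1,\infty\}$ is a smooth Riemann surface with no orbifold points. Consequently the quotient Teichm\"uller metric on $\mathcal{M}_{0,4}$ is a smooth complete hyperbolic metric compatible with its complex structure, and by uniformization such a metric is unique within a given conformal class, hence it agrees with the standard hyperbolic metric on $\hat{\mathbb{C}}\setminus\{0,1,\infty\}$.

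The main (mild) obstacle is verifying the freeness of the $\pmodo_{0,4}$-action: without it, the quotient would carry only an orbifold hyperbolic metric, and the uniqueness statement would have to be adjusted accordingly. This freeness is consistent with, and in fact equivalent to, the fact that $\mathcal{M}_{0,4}$ is a smooth surface rather than an orbifold, and it follows from the triviality of the stabilizers of the pure mapping class group on $\mathcal{T}_{0,4}$.
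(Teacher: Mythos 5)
Your proposal is correct and follows essentially the same route as the paper: Royden identifies the Teichm\"uller metric with the Kobayashi metric, the Kobayashi metric on $\mathbb{H}^2$ is the hyperbolic metric, and then one descends to $\mathcal{M}_{0,4}$ and invokes uniqueness of the complete hyperbolic metric on the $3$-punctured sphere. The paper's own argument is terser and leaves the descent to the quotient implicit; you spell it out, which is fine. One small note on the point you flag as a ``mild obstacle'': your justification of freeness is slightly circular as phrased (``it follows from the triviality of the stabilizers'' is just restating freeness). The clean reason is that $\pmodo_{0,4}$ is a free group, hence torsion-free, while any non-identity element with a fixed point in $\mathcal{T}_{0,4}$ would be realized by a conformal automorphism of a hyperbolic surface and would therefore have finite order --- a contradiction. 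Equivalently, one can simply cite that $\mathcal{M}_{0,4}\cong\hat{\mathbb{C}}\setminus\{0,1,\infty\}$ is a manifold, not an orbifold, as the paper does.
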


\begin{remark}
 Warning: This remark explains why $\pi^{*}(\mathcal{T}_{0,4})$ is a Teichm\"uller curve based on the language and some knowledge on Teichm\"uller curves may be known to experts in this area. But we do not provide preliminaries on Teichm\"uller curves because we do not need them in any part of this paper except this remark. Readers interested in Teichm\"uller curves may refer to \emph{e.g.} \cite{mcmullen2023billiards, moller2011teichmuller}. Readers only interested in hyperbolic geometry may skip this remark. 

 The embedding $\pi^{*}:\mathcal{T}_{0,4}\to \mathcal{T}_g$ is a Teichm\"uller curve that is induced by an $X\in \pi^{*}(\mathcal{T}_{0,4})$ and a holomorphic $1$-form $\omega$ on $X$ induced by the pair of multi-geodesics $\alpha$ and $\gamma$ that fills $X$. This is from two facts. First, $(X, \omega)$ is $\rho$-invariant and $\rho$ commutes with the $\mathrm{SL}_2(\mathbb{R})$ action on $(X,\omega)$, hence $\pi^{*}(\mathcal{T}_{0,4})$ is the $\mathrm{SL}_2(\mathbb{R})$ orbit of $X$ and is totally geodesic in $\mathcal{T}_g$. Second, $\mathrm{SL}(X, \omega)$ contains $\pmodo_{0,4}$ and is contained in $\modo_{0,4}$, and $\mathcal{T}_{0,4}/\pmodo_{0,4}$ and $\mathcal{T}_{0,4}/\modo_{0,4}$ are algebraic curves, hence $\pi^{*}(\mathcal{T}_{0,4})$ covers an algebraic curve in $\mathcal{M}_g$. 
The group $\pmodo_{0,4}\subseteq  \mathrm{SL}(X, \omega)$ because $D_{\alpha}$ and $D_{\gamma}$ acts on both $\mathcal{T}_{0,4}$ and $\pi^{*}(\mathcal{T}_{0,4})$. 
On the other hand, $  \mathrm{SL}(X, \omega)\subseteq\modo_{0,4}$ because 
any element of  $\mathrm{SL}(X, \omega)$ commutes with the $\rho$ action, hence is identified with an element in $\modo_{0,4}$. 
  The image of $\pi^{*}(\mathcal{T}_{0,4})$ in $\mathcal{M}_g$ is concretely descirbed in Lemma \ref{lem:modo}. 
\end{remark}

\subsection{Lengths of $\alpha$, $\beta$, $\gamma$ and $\delta$}
This subsection recalls length formulae of the geodesics $\alpha$, $\beta$, $\gamma$ and $\delta$ in the surfaces. All formulae are obtained in \cite{gao2023maximal}. 

For $X \in \pi^{*}(\mathcal{T}_{0,4})$ with the Fenchel-Nielsen coordinate $(c,t)$ defined in Section \ref{sub:fn}, let $s$ be the seam length of two isometric $n$-holed spheres forming $X$. When $c>0$ and $0 \le t \le c$, let $i = 1,2,...,g+1$ in (\ref{for:gamma})-(\ref{for:t=c}), then 
\begin{eqnarray}
 \cos \frac{\pi}{g+1} &=& \sinh \frac{c}{2} \sinh \frac{s}{2}  \text{\,\,\,\,\cite[(5-1)]{gao2023maximal}}; \label{for:seam_cuff} \\
\frac{\ell_{\gamma_i}(X)}{2} &=& c \text{\,\,\,\,\cite[(5-2), $\ell_{\gamma_i}(X) = 2|l_{CE}|$]{gao2023maximal}}; \label{for:gamma}\\
 \cosh \frac{\ell_{\alpha_i}(X)}{4} &=& \cosh \frac{s}{2} \cosh \frac{t}{2} \text{\,\,\,\,\cite[(5-3), $\ell_{\alpha_i}(X) = 4|l_{CD}|$]{gao2023maximal}}; \label{for:alpha}\\
 \cosh \frac{\ell_{\beta_i}(X)}{2} &=& \cosh s \cosh \frac{t}{2} \cosh \left( c- \frac{t}{2} \right) - \sinh \frac{t}{2} \sinh \left( c- \frac{t}{2} \right) \label{for:beta} \\
     & &\text{\,\,\,\,\cite[(5-5), $\ell_{\beta_i}(X) = 2|l_{C}|$]{gao2023maximal}}; \nonumber \\
 \cosh \frac{\ell_{\delta_1}(X)}{4g+4} &=& \cosh \frac{s}{2}\cosh \left(  \frac{c - t}{2} \right) \,\,\,\,\text{when $g$ is even }\label{for:delta_even}\\
          && \text{\cite[(5-4), $\ell_{\delta_1}(X) = 4(g+1) |l_{DE}|$]{gao2023maximal}}; \nonumber \\
 \cosh \frac{\ell_{\delta_j}(X)}{2g+2} &=& \cosh \frac{s}{2}\cosh \left( \frac{c - t}{2} \right) \,\,\,\,\text{when $g$ is odd}\label{for:delta_odd}\\
          &&\text{\cite[(5-4), $\ell_{\delta_j}(X) = 2(g+1) |l_{DE}|$]{gao2023maximal}, }\nonumber
\end{eqnarray}
for $j=1,2$. 

One may check by direct calculation, 
\begin{equation}
 \frac{\partial \ell_{\alpha_i}(X)}{\partial t} >0;\, \frac{\partial \ell_{\alpha_i}(X)}{\partial c} <0 \label{for:pd_alpha}
\end{equation}
and
\begin{equation}
 \frac{\partial \ell_{\beta_i}(X)}{\partial t} <0. \label{for:pd_beta}
\end{equation}
When $t=0$, 
\begin{equation}
 \ell_{\alpha_i}(X) < \ell_{\beta_i}(X);\, \ell_{\gamma_i}(X) < \ell_{\beta_i}(X). \label{for:t=0}
\end{equation}
When $t=c$,
\begin{equation}
 \ell_{\alpha_i}(X) > \ell_{\beta_i}(X). \label{for:t=c}
\end{equation}

\subsection{Fenchel-Nielsen coordinate change} \label{sub:coord_change}
We defined the Fenchel-Nielsen coordinate $(c, t)$ of $\mathcal{T}_{0,4}$ (\emph{resp. } $\pi^{*}(\mathcal{T}_{0,4})$) by letting $\gamma$ be the cuff and letting $\alpha$ be the simple closed (multi-)geodesic consisting of seams when $t = 0$. Conversely, if one let $\alpha$ be the cuff and let $\gamma$ be the simple closed (multi-)geodesic consisting of seams when the new twist parameter is $0$, then one may get a new Fenchel-Nielsen coordinate $(c_{\alpha}, t_{\alpha})$ of $\mathcal{T}_{0,4}$ (\emph{resp. } $\pi^{*}(\mathcal{T}_{0,4})$). 
One may give the coordinate change formulae between $(c, t)$ and $(c_{\alpha}, t_{\alpha})$. As the definition of $c$, let $c_{\alpha} \overset{\mathrm{def}}{=} \frac{1}{2}\ell_{\alpha_i}(X)$ for $i= 1,2, ..., g+1$. By (\ref{for:alpha}), we have that
\begin{equation}
 \cosh  \frac{c_{\alpha}}{2} = \cosh \frac{s}{2} \cosh \frac{t}{2}. \label{for:c_alpha}
\end{equation}
Let $s_{\alpha}$ and $t_{\alpha}$ be the seam length and twist parameter with respect to $\alpha$ respectively, then by the symmetry between $\alpha$ and $\gamma$, (\ref{for:seam_cuff}) and (\ref{for:c_alpha}), we have that
\begin{eqnarray}
 \cos \frac{\pi}{g+1} &=& \sinh \frac{c_{\alpha}}{2} \sinh \frac{s_{\alpha}}{2}; \label{for:seam_cuff_alpha} \\
 \cosh  \frac{c}{2} &=& \cosh \frac{s_{\alpha}}{2} \cosh \frac{t_{\alpha}}{2}. \label{for:t_alpha}
\end{eqnarray}

The formulae (\ref{for:seam_cuff}), (\ref{for:c_alpha}), (\ref{for:seam_cuff_alpha}), (\ref{for:t_alpha}) imply that
\begin{equation}
 t=0 \text{ iff } t_{\alpha}=0. 
 \label{for:tt_alpha_0}
\end{equation}
\begin{figure}[htbp]
 \centering
 \includegraphics{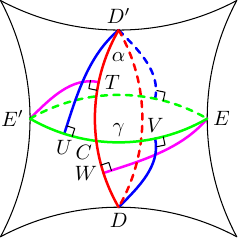}
 \caption{The seams}
 \label{fig:rotation_local_5}
\end{figure}

Observe that when $t\ne 0$, $\alpha$ and $\gamma$  are not perpenticular. Then the seams bound right-angled triangles with $\alpha$ and $\gamma$, where the acute angles between $\alpha$ and $\gamma$ are contained in these triangles. As is illustrated in Figure \ref{fig:rotation_local_5}, $C$ is an intersection point of $\alpha$ and $\gamma$. The angles $\angle D'CE'$ and $\angle DCE$ are acute. The segments $D'U$ and $DV$ are half seams with respect to the $(c,t)$ coordinate; they cobound the triangles $\triangle D'UC$ and $\triangle DVC$ with $\alpha$ and $\gamma$ respectively. The twist parameter $t$ is realized by $VU$.  On the other hand, the segments $E'T$ and $EW$ are half seams with respect to the $(c_{\alpha},t_{\alpha})$ coordinate, they cobound the triangles $\triangle E'TC$ and $\triangle EWC$ with $\alpha$ and $\gamma$ respectively. The twist parameter $t_{\alpha}$ is realized by $WT$. As the triangles $\triangle WCE$ and $\triangle VCD$ share an angle $\angle DCE$, when traveling along the piecewise geodesic $EWTE'$, one may turn right at $W$, while when traveling along the piecewise geodesic $DVUD'$, one may turn left at $V$. Hence 
if $t \ne 0$, then
\begin{equation}
 t\cdot t_{\alpha} <0. 
 \label{for:tt_alpha_neg}
\end{equation}

This coordinate change induces an order-$2$ orientation-preserving isometry on $\mathcal{T}_{0,4}$: 
\begin{eqnarray*}
 f: \mathcal{T}_{0,4} &\to & \mathcal{T}_{0,4} \\
 X &\mapsto& Y
\end{eqnarray*}
such that in the Fenchel-Nielsen coordinates $(c,t)$ and $(c_{\alpha}, t_{\alpha})$, 
\begin{equation}
 c(Y) = c_{\alpha}(X); \, t(Y) = t_{\alpha}(X). 
 \label{for:f_coord}
\end{equation}
The map $f$ is orientation-preserving and isometric because one can construct an orientation-preserving isometry from the surface $X$ to $f(X)$ exchanging the geodesics $\alpha$ and $\gamma$, by extending the isometry between $X \backslash \alpha$ and $f(X) \backslash\gamma$ to the whole surface. 
The order of $f$ is $2$ because by (\ref{for:seam_cuff}) (\ref{for:c_alpha}) (\ref{for:seam_cuff_alpha}) and (\ref{for:t_alpha}), the formula (\ref{for:f_coord}) is equivalent to 
\[
 c_{\alpha}(Y) = c(X); \, t_{\alpha}(Y) = t(X). 
\]

\subsection{Minsky's product regions theorem}

Minsky's product regions theorem \cite[Theorem 6.1]{minsky1996extremal} implies the following theorem for $\mathcal{T}_{0,4}$. 

\begin{theorem}
 For the Fenchel-Nielsen coordinate $(c,t)$ of $\mathcal{T}_{0,4}$, identify $\mathcal{T}_{0,4}$ and $\mathbb{H}^2$ by 
 \begin{eqnarray*}
  \phi:\mathcal{T}_{0,4} & \to & \mathbb{H}^2 \\
  (c,t) &\mapsto&   \frac{t}{c} + \frac{i}{c}. 
 \end{eqnarray*}
 Let $d_{\mathcal{T}}$ be the Teichm\"uller distance on $\mathcal{T}_{0,4}$ and $d_{\mathbb{H}}$ be the hyperbolic distance on $\mathbb{H}^2$. For a sufficiently small $\epsilon >0$, for any $X, Y \in \mathcal{T}_{0,4}$ satisfying $\ell_{\gamma}(X), \ell_{\gamma}(Y) < \epsilon$, there is a constant $C >0$ such that
 \[
 |d_{\mathcal{T}}(X,Y) - d_{\mathbb{H}}(\phi(X), \phi(Y))| < C. 
 \]
 \label{thm:minsky}
\end{theorem}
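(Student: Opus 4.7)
The plan is to apply Minsky's product regions theorem \cite[Theorem 6.1]{minsky1996extremal} to the $\epsilon$-thin region $\{X \in \mathcal{T}_{0,4} : \ell_{\gamma}(X) < \epsilon\}$. The key simplification in our setting is that cutting the four-punctured sphere along $\gamma$ produces two thrice-punctured spheres, whose Teichm\"uller spaces are single points. Consequently, in Minsky's product decomposition for this thin region the only nontrivial factor is the upper half-plane associated to the annular neighborhood of $\gamma$; the sup metric on the product reduces to the hyperbolic metric on that single factor, with an additive error depending only on $\epsilon$.

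Next I would match the natural coordinates on Minsky's $\gamma$-factor to the map $\phi$. In \cite[\S 6]{minsky1996extremal}, after appropriate normalization, the upper half-plane coordinate on the annular factor is $z = \tau_{\gamma}/\ell_{\gamma} + i/\ell_{\gamma}$, where $\tau_{\gamma}$ is the Fenchel-Nielsen twist along $\gamma$ and $\ell_{\gamma}$ is its length. In the Fenchel-Nielsen coordinates of Section \ref{sub:fn} we have $\tau_{\gamma} = t$ and $\ell_{\gamma} = 2c$, so Minsky's coordinate becomes $z = t/(2c) + i/(2c) = \tfrac{1}{2}\phi(c,t)$. Since $w \mapsto 2w$ is a hyperbolic isometry of $\mathbb{H}^{2}$, the hyperbolic distance measured in the two parametrizations agrees exactly, and Minsky's bound translates directly into the inequality $|d_{\mathcal{T}}(X,Y) - d_{\mathbb{H}}(\phi(X), \phi(Y))| < C$ claimed in the theorem.

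The main obstacle is purely bookkeeping: one must carefully match the normalization conventions for length and twist between the present paper (where $\ell_{\gamma_{i}} = 2c$, not $c$) and those used in \cite{minsky1996extremal}. Any multiplicative discrepancy merely rescales the half-plane coordinate by a positive real, which is a hyperbolic isometry of $\mathbb{H}^{2}$ and does not affect $d_{\mathbb{H}}$; any residual additive discrepancy is absorbed into the constant $C = C(\epsilon)$. Once these normalizations are pinned down, the statement is an immediate specialization of Minsky's general result to the one-factor case afforded by the rigidity of the complementary thrice-punctured spheres.
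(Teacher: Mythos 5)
The paper itself offers no proof of this statement: it is presented as an immediate consequence of Minsky's Theorem 6.1, with the specialization to $\mathcal{T}_{0,4}$ left implicit. Your proposal correctly supplies exactly what that specialization requires --- namely that the complementary pieces $S_{0,4}\setminus\gamma$ are thrice-punctured spheres with trivial Teichm\"uller space, so Minsky's product collapses to the single $\mathbb{H}_\gamma$-factor, and that the normalization $\ell_\gamma = 2c$ produces Minsky's coordinate $\tfrac{1}{2}\phi$, which differs from $\phi$ by the hyperbolic isometry $w\mapsto 2w$ --- so your argument is the paper's approach with the bookkeeping made explicit. One small refinement worth noting: if the vertical and horizontal normalizations in Minsky's $\mathbb{H}_\gamma$-coordinate happened to carry \emph{different} multiplicative constants (e.g.\ a factor of $\pi$ in $1/\ell_\gamma$ only), the resulting affine map $x+iy\mapsto ax+iby$ with $a\ne b$ is \emph{not} a hyperbolic isometry; nevertheless it still distorts $d_{\mathbb{H}}$ by at most an additive $2|\log(a/b)|$, which is absorbed into $C$, so the conclusion is unaffected even though the ``hyperbolic isometry'' justification as stated would need this extra sentence.
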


\subsection{Gromov boundary of $\mathbb{H}^2$}
For the hyperbolic plane $\mathbb{H}^2$, its \emph{Gromov boundary} consists of equivalent classes of geodesic rays. For two geodesic rays parametrized by their arc lengths $r_1, r_2: [0,+\infty)\to \mathbb{H}^2$, $r_1\sim r_2$ iff there is a $C>0$ such that 
\[
 \limsup\limits_{t\to \infty} d_{\mathbb{H}}(r_1(t), r_2(t)) <C. 
\]
For the upper half plane model, the Gromov boundary of $\mathbb{H}^2$ is identified with $\mathbb{R}\cup \{\infty\}$. One may denote it as $\partial_{\infty} \mathbb{H}^2$. 

\section{Fundamental domain}
\label{sec:fund_dom}
In this section, we describe a fundamental domain of the covering from $\pi^{*}(\mathcal{T}_{0,4})$ to its image in $\mathcal{T}_g/\modo_g^{\pm}$, which is denoted as $\pi^{*}(F_0)$. In the first subsection, a fundamental domain of $\mathcal{T}_{0,4}$ under the the action of $\pmodo_{0,4}=\langle D_{\alpha}, D_{\gamma} \rangle$ (denoted as $F$) is described in Proposition \ref{prop:fundamental}. In the second subsection, the fundamental domain $\pi^{*}(F_0)$ is described in Proposition \ref{prop:fundamental_curve}. 
\subsection{The fundamental domain $F$}
In this subsection, we describe a fundamental domain of $\mathcal{T}_{0,4}$ under the action of $\pmodo_{0,4}=\langle D_{\alpha}, D_{\gamma} \rangle$. 
\begin{proposition}
 For $\mathcal{T}_{0,4}$ with the Fenchel-Nielsen coordinates $(c,t)$ and $(c_{\alpha}, t_{\alpha})$ described in Section \ref{sec:pre}, we have that
 \[
  F \overset{\mathrm{def}}{=}\{ O \in \mathcal{T}_{0,4} | |t(O)| \le c(O), |t_{\alpha}(O)| \le c_{\alpha}(O) \}
 \]
 is a fundamental domain of $\mathcal{T}_{0,4}$ under the action of $\pmodo_{0,4} = \langle D_{\alpha}, D_{\gamma} \rangle$. 

 \label{prop:fundamental}
\end{proposition}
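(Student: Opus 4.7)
The plan is to exhibit $F$ as the closure of an ideal hyperbolic quadrilateral in $\mathcal{T}_{0,4} \cong \mathbb{H}^2$ whose four sides are $L_{\pm 1} = \{t = \pm c\}$ and $L^{\alpha}_{\pm 1} = \{t_{\alpha} = \pm c_{\alpha}\}$, and whose opposite sides are paired by the generators $D_{\gamma}$ and $D_{\alpha}$. Once this is in place, Poincaré's polygon theorem immediately gives that $\langle D_{\alpha}, D_{\gamma}\rangle$ is a free group of rank two acting on $\mathcal{T}_{0,4}$ with fundamental domain $F$. Since $\pmodo_{0,4} = \langle D_{\alpha}, D_{\gamma}\rangle$ is already known to be free of rank two, the produced group coincides with $\pmodo_{0,4}$ and the claim follows.

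First I would record the action of the Dehn twists on the Fenchel-Nielsen coordinates: $D_{\gamma}$ shifts $t$ by $\ell_{\gamma} = 2c$, hence sends $L_{-1}$ to $L_{1}$, and by the symmetry provided by the isometry $f$ of Section \ref{sub:coord_change}, $D_{\alpha}$ shifts $t_{\alpha}$ by $2c_{\alpha}$ and sends $L^{\alpha}_{-1}$ to $L^{\alpha}_{1}$. This identifies the four curves as the putative sides of a fundamental polygon and identifies the side-pairing isometries. The region $F$ is then exactly the intersection of the two closed ``strips'' $\{|t|\le c\}$ and $\{|t_{\alpha}|\le c_{\alpha}\}$, each of which is evidently a fundamental strip for one of the cyclic subgroups $\langle D_{\gamma}\rangle$ or $\langle D_{\alpha}\rangle$.

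Next I would invoke the promised lemmas: Lemma \ref{lem:geod} that each of $L_{\pm 1}, L^{\alpha}_{\pm 1}$ is a Teichmüller geodesic, Lemma \ref{lem:disjoint} that these four geodesics are pairwise disjoint, and the crucial Lemma \ref{lem:limit} exhibiting a geodesic $L_0$ that shares one ideal endpoint with both $L_{-1}$ and $L_{1}$ and its other endpoint with both $L^{\alpha}_{-1}$ and $L^{\alpha}_{1}$, while separating $L_{-1}$ from $L_{1}$ and separating $L^{\alpha}_{-1}$ from $L^{\alpha}_{1}$. Combined with the identification of the Teichmüller metric with the hyperbolic metric (Claim \ref{claim:dt_dk_dh}), this forces the four sides, viewed in $\mathbb{H}^2$, to bound an ideal quadrilateral with two pairs of asymptotic sides meeting at the two ideal vertices of $L_0$, and with the remaining two ideal vertices at the other ends of the two pairs. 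The fact that all four vertices are ideal makes the Poincaré cycle conditions vacuous, so the final step is a direct application of Poincaré's theorem.

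The main obstacle will be the geometric verification that the four geodesics really do close up into a quadrilateral and do not, for instance, link incorrectly or bound a region with extra ideal vertices. The difficulty is that Fenchel-Nielsen coordinates are not standard coordinates on $\mathbb{H}^2$, so even locating the ideal endpoints of $L_{\pm 1}$ in $\partial_{\infty}\mathbb{H}^2$ is nontrivial; this is precisely what Minsky's product region theorem (Theorem \ref{thm:minsky}) is used for, controlling the thin part where $\gamma$ is being pinched and forcing $L_{-1}$ and $L_{1}$ to share the cusp endpoint $c \to \infty$. The symmetric control for $L^{\alpha}_{\pm 1}$ is then inherited via the orientation-preserving involution $f$ of Section \ref{sub:coord_change} exchanging the two Fenchel-Nielsen systems, and Lemma \ref{lem:limit} supplies the final structural ingredient, that the ``other'' pair of ideal endpoints are also matched up through $L_0$.
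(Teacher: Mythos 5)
Your proposal takes essentially the same route as the paper: identify the four bounding Teichm\"uller geodesics $L_{\pm1}$, $L^{\alpha}_{\pm1}$ and use Lemma~\ref{lem:geod}, Lemma~\ref{lem:disjoint}, and (crucially) Lemma~\ref{lem:limit}, backed by Minsky's product regions theorem, to show that they bound an ideal quadrilateral whose opposite sides are identified by $D_\gamma$ and $D_\alpha$. The only real difference is the final assembly step: you invoke Poincar\'e's polygon theorem directly, whereas the paper writes $F = F_1 \cap F_2$ with $F_1 = \{ |t/c|\le 1\}$ and $F_2 = \{ |t_\alpha/c_\alpha|\le 1\}$ the fundamental strips for the cyclic subgroups $\langle D_\gamma\rangle$ and $\langle D_\alpha\rangle$, and argues that their intersection is a fundamental domain for the free product. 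Both are reasonable and roughly equally rigorous. One small inaccuracy in your write-up that you should fix: the fact that all four vertices of $F$ are ideal does \emph{not} make Poincar\'e's cycle conditions vacuous. At an ideal vertex one still needs the cycle transformation to be parabolic rather than loxodromic in order for the quotient to be complete. Here this does hold: the one-element cycles at $\infty$ and $0$ are stabilized by the parabolics $D_\gamma$ and $D_\alpha$ (Claim~\ref{claim:parabolic}), and the remaining cycle $\{1,-1\}$ is where the curve $\delta$ is pinched, so its cycle transformation is conjugate to a power of the Dehn twist along $\delta$ and hence also parabolic; but this must be verified, it is not automatic.
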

We prove Proposition \ref{prop:fundamental} in steps. 
First, we define six curves in $\mathcal{T}_{0,4}$. These curves are described by the Fenchel-Nielsen coordinates $(c,t)$ and $(c_{\alpha}, t_{\alpha})$. 
\begin{eqnarray*}
 L_{-1} & \overset{\mathrm{def}}{=} & \left\{ O \in \mathcal{T}_{0,4} \left| \frac{t}{c} = -1 \right. \right\}; \\
 L_{0} & \overset{\mathrm{def}}{=} & \left\{ O \in \mathcal{T}_{0,4} | t = 0 \right\}; \\
 L_{1} & \overset{\mathrm{def}}{=} & \left\{ O \in \mathcal{T}_{0,4} \left| \frac{t}{c} = 1 \right. \right\}; \\
 L_{-1}^{\alpha} & \overset{\mathrm{def}}{=} & \left\{ O \in \mathcal{T}_{0,4} \left| \frac{t_{\alpha}}{c_{\alpha}} = -1 \right. \right\}; \\
 L_{0}^{\alpha} & \overset{\mathrm{def}}{=} & \left\{ O \in \mathcal{T}_{0,4} | t_{\alpha} = 0 \right\}; \\
 L_{1}^{\alpha} & \overset{\mathrm{def}}{=} & \left\{ O \in \mathcal{T}_{0,4} \left| \frac{t_{\alpha}}{c_{\alpha}} = 1 \right. \right\}. 
\end{eqnarray*}

\begin{lemma}
 In $\mathcal{T}_{0,4}$, the six curves
 $L_{-1}$, $L_0$, $L_1$, $L_{-1}^{\alpha}$, $L_0^{\alpha}$, $L_1^{\alpha}$
 are geodesics with respect to Teichm\"uller metric. 
 \label{lem:geod}
\end{lemma}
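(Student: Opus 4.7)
The plan is to realize each of the six curves as the fixed point set of an orientation-reversing isometric involution of $\mathcal{T}_{0,4}$. Since by Claim~\ref{claim:dt_dk_dh} the space $\mathcal{T}_{0,4}$ with the Teichm\"uller metric is isometric to $\mathbb{H}^2$, any one-dimensional non-empty fixed locus of such an involution is automatically totally geodesic, and therefore a Teichm\"uller geodesic.

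First I would exhibit an orientation-reversing involution $\sigma \in \modo^{\pm}_{0,4}$ as the reflection of the orbifold $O$ across the curve $\gamma$: this map fixes $\gamma$ pointwise and reverses the orientation of the orbifold. In the $(c,t)$ Fenchel-Nielsen coordinates $\sigma$ acts as $(c,t)\mapsto (c,-t)$, since reversing the orientation while preserving the cuff flips the sign of the twist parameter. Its fixed locus is exactly $\{t=0\}=L_0$, which is therefore a geodesic. The curves $L_{\pm 1}$ are then obtained by composing $\sigma$ with the Dehn twist along $\gamma$: the maps $D_\gamma\circ\sigma$ and $D_\gamma^{-1}\circ\sigma$ are again orientation-reversing involutions of $\mathcal{T}_{0,4}$, acting as $(c,t)\mapsto (c,2c-t)$ and $(c,t)\mapsto (c,-2c-t)$ respectively, with fixed sets $\{t=c\}=L_1$ and $\{t=-c\}=L_{-1}$.

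For $L_0^{\alpha}$, $L_1^{\alpha}$, $L_{-1}^{\alpha}$ the construction is entirely analogous: swap the roles of $\alpha$ and $\gamma$ and use the corresponding reflection $\sigma_\alpha$ together with $D_\alpha^{\pm 1}$. Alternatively, the order-$2$ isometry $f$ from Section~\ref{sub:coord_change} sends each $L_j^{\alpha}$ to one of $L_0$, $L_1$, $L_{-1}$, so the geodesicity of the $\alpha$-curves transfers from that of the $(c,t)$-curves via the fact that isometries preserve geodesics.

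The main obstacle is the careful construction and verification of $\sigma$, in particular justifying the action $(c,t)\mapsto (c,-t)$. This requires keeping track of the signed nature of the twist parameter under orientation reversal, and noting that $\sigma$ lies in the extended mapping class group rather than in $\pmodo_{0,4}$ itself. Once $\sigma$ is in hand, the remaining verifications are formal from the standard action of Dehn twists on Fenchel-Nielsen coordinates.
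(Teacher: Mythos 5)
Your proposal is correct and follows essentially the same route as the paper: both realize each curve as the fixed locus of an orientation-reversing isometric involution (your $\sigma$, the paper's $r_0$, both acting as $(c,t)\mapsto(c,-t)$) composed appropriately with $D_\gamma^{\pm 1}$ or $D_\alpha^{\pm 1}$. The only cosmetic differences are that the paper describes the base involution as the mirror-image map $O \mapsto \bar O$ and invokes Royden's theorem to justify that the Dehn twists act isometrically, whereas you describe it as a topological reflection across $\gamma$; your alternative for the $\alpha$-curves via the isometry $f$ is an equally valid shortcut not taken by the paper.
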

\begin{proof}
 For $O \in \mathcal{T}_{0,4}$, one may consider the reflection (denoted as $r_0$), mapping $O$ to its mirror image. On the Teichm\"uller space $\mathcal{T}_{0,4}$, $r_0$ maps $(c,t)$ to $(c,-t)$ and is an isometry with respect to Teichm\"uller metric. The fixed-point set of $r_0$ in $\mathcal{T}_{0,4}$ is the curve $L_0$. Hence $L_0$ is a Teichm\"uller geodesic. 

The Dehn twist $D_{\gamma}$ maps $(c,t)$ to $(c, t+2c)$ (Recall $\ell_{\gamma}(O) =2c$. ). As $D_{\gamma} \in \pmodo_{0,4}$, $D_{\gamma}$ is an isometry on $\mathcal{T}_{0,4}$ with respect to Teichm\"uller metric by Royden's theorem (see \emph{e.g.}\cite{royden1971automorphisms}). Then $L_1$ is the fixed-point set  of the isometry
\begin{eqnarray*}
 D_{\gamma} \circ r_0: \mathcal{T}_{0,4} &\to& \mathcal{T}_{0,4} \\
 (c,t) & \mapsto & \left( {c}, -t+2c \right), 
\end{eqnarray*}
hence is a Teichm\"uller geodesic. 

Similarly, $L_{-1}$ is the fixed-point set  of the isometry $D_{\gamma}^{-1} \circ r_0$, hence is a Teichm\"uller geodesic. 

The curves  $L_{-1}^{\alpha}$, $L_{0}^{\alpha}$, $L_{1}^{\alpha}$ are proved to be Teichm\"uller geodesics by showing that they are the fixed-point sets of the isometries $D_{\alpha}^{-1} \circ r_0$, $r_0$ and $D_{\alpha} \circ r_0$ respectively. 
The proof is complete. 
\end{proof}

The geodesics $L_0$ and $L_0^{\alpha}$ are the fixed-point set of the same isometry $r_0$, which implies the following lemma. 

\begin{lemma}
 The geodesic $L_0$ coincides with the geodesic $L_0^{\alpha}$. On this geodesic, $c\to 0$ iff $c_{\alpha} \to \infty$; $c\to \infty$ iff $c_{\alpha} \to 0$. 
 \label{lem:0_coicide}
\end{lemma}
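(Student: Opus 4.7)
My plan is to separate the lemma into its two assertions and dispatch each with the formulas and the reflection $r_0$ already in hand.

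For the first assertion $L_0 = L_0^\alpha$, the cleanest route is to invoke (\ref{for:tt_alpha_0}), which states $t=0$ iff $t_\alpha = 0$; this gives the equality of the two curves as subsets of $\mathcal{T}_{0,4}$ at once. As the parenthetical remark before the lemma suggests, an alternative (and more geometric) route is to argue that the reflection $r_0$ of Lemma \ref{lem:geod} has $L_0$ as its fixed-point set and also has $L_0^\alpha$ as its fixed-point set. The latter holds because $r_0$ is the mirror reflection of the underlying orbifold, and so acts in the $(c_\alpha,t_\alpha)$ coordinates by $(c_\alpha,t_\alpha)\mapsto(c_\alpha,-t_\alpha)$ by exactly the same reasoning used for $(c,t)$ — this is formalized via the order-$2$ isometry $f$ of Section \ref{sub:coord_change}, which intertwines the two coordinate systems and commutes with $r_0$. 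Either way, $L_0 = L_0^\alpha$.

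For the asymptotic behavior on this common geodesic, I plan to set $t=0$ (equivalently $t_\alpha=0$) and then read off the relation between $c$ and $c_\alpha$ from (\ref{for:seam_cuff}) and (\ref{for:c_alpha}). Substituting $t=0$ into (\ref{for:c_alpha}) collapses it to $\cosh(c_\alpha/2) = \cosh(s/2)$, hence $c_\alpha = s$. Plugging this into (\ref{for:seam_cuff}) yields
\[
\sinh\frac{c}{2}\,\sinh\frac{c_\alpha}{2} \;=\; \cos\frac{\pi}{g+1},
\]
a strictly positive constant. Because $u\mapsto\sinh(u/2)$ is a homeomorphism from $(0,\infty)$ onto itself, this identity immediately forces $c\to 0 \iff c_\alpha\to\infty$ and $c\to\infty \iff c_\alpha\to 0$.

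The only non-bookkeeping ingredient is the assertion that $r_0$ acts as $(c_\alpha,t_\alpha)\mapsto(c_\alpha,-t_\alpha)$; this is the main (and still very mild) obstacle, and I expect to resolve it in one or two sentences by appealing to the $\alpha$–$\gamma$ symmetry embodied by $f$ from Section \ref{sub:coord_change}, or simply by bypassing it in favor of the direct use of (\ref{for:tt_alpha_0}). Everything else is algebraic manipulation of two identities, so I do not anticipate any serious difficulty.
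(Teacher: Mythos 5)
Your proposal is correct and follows essentially the same route as the paper: use (\ref{for:tt_alpha_0}) to get $L_0 = L_0^\alpha$, then derive $\sinh\frac{c}{2}\sinh\frac{c_\alpha}{2}=\cos\frac{\pi}{g+1}$ at $t=t_\alpha=0$ and read off the asymptotics. Your derivation of that identity is slightly leaner, using only (\ref{for:seam_cuff}) and (\ref{for:c_alpha}) via the observation $c_\alpha = s$ when $t=0$, whereas the paper also cites (\ref{for:seam_cuff_alpha}); both are valid.
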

\begin{proof}
 The geodesics $L_0 = L^{\alpha}_0$ follows from (\ref{for:tt_alpha_0}). Combining (\ref{for:seam_cuff}), (\ref{for:c_alpha}) and (\ref{for:seam_cuff_alpha}), one may have, when $t=t_{\alpha}=0$,
 \begin{equation}
 \sinh \frac{c}{2}\sinh \frac{c_{\alpha}}{2} = \cos \frac{\pi}{g+1}. 
 \label{for:cc_alpha_t0} 
 \end{equation}
Thus $c\to 0$ iff $c_{\alpha} \to \infty$; $c\to \infty$ iff $c_{\alpha} \to 0$. The proof is complete. 

\end{proof}

The geodesics $L_{-1}$, $L_0$, $L_1$ are parametrized by $c$, where $c\in (0,+\infty)$. Consider the geodesic rays of $L_{-1}$, $L_0$, $L_1$ restricting $c\in (0,1]$. 
\begin{lemma}
 The geodesic rays $L_{-1}|_{c \le 1}$, $L_0|_{c \le 1}$, $L_1|_{c \le 1}$ are the same point of the Gromov boundary of $\mathcal{T}_{0,4}$. 
 \label{lem:limit}
\end{lemma}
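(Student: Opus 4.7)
The plan is to reduce the claim to the corresponding statement in $\mathbb{H}^2$ via Minsky's product region theorem. Since $(\mathcal{T}_{0,4}, d_\mathcal{T})$ is isometric to $\mathbb{H}^2$ by Royden's theorem, its Gromov boundary is a circle, and two Teichm\"uller geodesic rays represent the same boundary point if and only if they stay within bounded Hausdorff distance. Under the identification $\phi$, the three rays become vertical half-lines in the upper half plane: writing $y = 1/c \in [1, \infty)$, we have $\phi(L_0|_{c \le 1}) = \{iy : y \ge 1\}$ and $\phi(L_{\pm 1}|_{c \le 1}) = \{\pm 1 + iy : y \ge 1\}$. All three of these hyperbolic rays converge to the ideal point $\infty \in \partial_\infty \mathbb{H}^2$, and the standard formula $\cosh d_\mathbb{H}(z,w) = 1 + |z-w|^2/(2 \operatorname{Im} z \cdot \operatorname{Im} w)$ gives
\[
d_\mathbb{H}(i/c,\, \pm 1 + i/c) = \arccosh(1 + c^2/2) \le \arccosh(3/2)
\]
for every $c \in (0, 1]$. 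Hence the three vertical $\phi$-images stay within uniformly bounded hyperbolic distance of one another when compared along the common parameter $c$.

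I would then invoke Theorem~\ref{thm:minsky} to transfer this bound from $(\mathbb{H}^2, d_\mathbb{H})$ back to $(\mathcal{T}_{0,4}, d_\mathcal{T})$. Let $\epsilon > 0$ be Minsky's threshold. For $c < \epsilon/2$ one has $\ell_\gamma = 2c < \epsilon$, so Minsky's inequality yields
\[
d_\mathcal{T}\bigl((c, 0),\, (c, \pm c)\bigr) \le \arccosh(3/2) + C.
\]
For the compact complementary range $c \in [\epsilon/2, 1]$, the three corresponding points $(c, 0)$ and $(c, \pm c)$ lie in a Teichm\"uller-compact subset of $\mathcal{T}_{0,4}$, so their pairwise Teichm\"uller distance is automatically bounded there as well. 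Combining the two regimes produces a uniform upper bound on the Hausdorff distance between $L_0|_{c \le 1}$ and $L_{\pm 1}|_{c \le 1}$.

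Since $(\mathcal{T}_{0,4}, d_\mathcal{T})$ is isometric to $\mathbb{H}^2$, three geodesic rays at pairwise bounded Hausdorff distance represent the same Gromov boundary point, which is the conclusion of the lemma. The main technical point I expect to have to handle carefully is matching the $c$-parametrization used above with the arc-length parametrization appearing in the paper's definition of $\sim$: because Minsky's bound is additive and the $\phi$-images of the three rays share the common ideal endpoint $\infty$, this amounts to a reparametrization that changes the relevant $\limsup_{t\to \infty} d_\mathcal{T}(r_i(t), r_j(t))$ only by an additive constant, and so preserves the Gromov equivalence.
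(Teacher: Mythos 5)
Your proof is correct, and it takes a somewhat cleaner route than the paper's. The paper works directly with the arc-length parametrizations $f_0, f_1$ and applies Theorem~\ref{thm:minsky} three times: twice (once along each ray) to derive the control $\left|\log\bigl(f_1^{-1}(x)/f_0^{-1}(x)\bigr)\right|<2C$ on how the two arc-length reparametrizations distort the $c$-coordinate, and a third time to transfer the resulting hyperbolic triangle-inequality estimate back to $d_{\mathcal T}$. You instead compare same-$c$ slices $(c,0)$ and $(c,\pm c)$ directly, apply Minsky once to obtain a uniform bound on $d_{\mathcal T}\bigl((c,0),(c,\pm c)\bigr)$ for small $c$, handle the compact range $c\in[\epsilon/2,1]$ by compactness, and deduce bounded Hausdorff distance between the rays. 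This buys a shorter argument at the modest cost of having to invoke (rather than re-derive) the standard equivalence that, for geodesic rays in a proper Gromov hyperbolic space such as $(\mathcal T_{0,4},d_{\mathcal T})\cong\mathbb H^2$, bounded Hausdorff distance is the same as representing a common boundary point; the paper's extra work is precisely a hands-on verification of the arc-length $\limsup$ definition it states. Your closing remark about reparametrization being harmless is essentially the right idea, but it would be cleaner and more convincing to just cite this equivalence explicitly rather than argue informally about additive constants under reparametrization.
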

\begin{proof}
 The main ingredient of the proof is Theorem \ref{thm:minsky}. Some symbols for example $\phi$, $C$, $d_{\mathcal{T}}$, $d_{\mathbb{H}}$ are from the statement of Theorem \ref{thm:minsky}. 
Take the Fenchel-Nielsen coordinate $(c,t)$ of $\mathcal{T}_{0,4}$.
 For a sufficiently small $c_0 \in (0,1)$,  let
 \begin{eqnarray*}
  l_j: (0,c_0] &\to& \mathcal{T}_{0,4} \\
  c &\mapsto& (c,j\cdot c)
 \end{eqnarray*}
 be the map corresponding to the concerned geodesic ray $L_j|_{c \le c_0}$, where $j= -1, 0, 1$. 
 Let 
 \begin{eqnarray*}
  f_j: (0,c_0] & \to & [0,+\infty) \\
  c& \mapsto & x = f_j(c) 
 \end{eqnarray*}
 be the arc length reparametrization of $l_j$. Namely
 \begin{eqnarray*}
  l_j\circ f_j^{-1}: [0,+\infty) & \to & \mathcal{T}_{0,4} \\
         x & \mapsto &  l_j\circ f_j^{-1}(x)= l_j(c) = (c, j\cdot c)
 \end{eqnarray*}
 is the geodesic ray parametrized by arc length with respect to the Teichm\"uller metric $d_{\mathcal{T}}$. 

 WLOG, one may consider the rays $L_0|_{c \le c_0}$ and $L_1|_{c \le c_0}$. 
 The endpoints of these rays in $\mathcal{T}_{0,4}$ are $$l_0\circ f^{-1}_0(0) = (c_0, 0)\text{ and }l_1\circ f^{-1}_1(0) = (c_0, 1\cdot c_0)$$ respectively. For these points, one may have $$\phi\circ l_0\circ f^{-1}_0(0) = \frac{i}{c_0}\text{ and }\phi\circ l_1\circ f^{-1}_1(0) = 1+\frac{i}{c_0}.$$ 
 For any $x>0$, consider $$l_0\circ f^{-1}_0(x) = (f_0^{-1}(x),0) \text{ and }l_1\circ f^{-1}_1(x) = (f_1^{-1}(x),1\cdot f_1^{-1}(x)) .$$
 For these points, one may have that
\begin{equation}
\phi\circ l_0\circ f^{-1}_0(x) =  \frac{i}{f_0^{-1}(x)} \text{ and }\phi\circ l_1\circ f^{-1}_1(x) = 1+ \frac{i}{f_1^{-1}(x)}. 
\label{for:hcoord}
\end{equation}
 Since $c_0$ is sufficiently small, by Theorem \ref{thm:minsky}, there is a $C>0$ such that, on the geodesic ray $L_0|_{c \le c_0}$, one may have that
 \[
  |d_{\mathcal{T}}(l_0\circ f^{-1}_0(x), l_0\circ f^{-1}_0(0)) - d_{\mathbb{H}} (\phi\circ(l_0\circ f^{-1}_0)(x), \phi\circ(l_0\circ f^{-1}_0)(0)) |  < C 
 \]
namely,
 \[
 \left|x - \log \frac{c_0}{f_0^{-1}(x)} \right| <C. 
 \]
On the geodesic ray $L_1|_{c \le c_0}$, one may have that
 \[
 |d_{\mathcal{T}}(l_1\circ f^{-1}_1(x), l_1\circ f^{-1}_1(0)) - d_{\mathbb{H}} (\phi\circ(l_1\circ f^{-1}_1)(x), \phi\circ(l_1\circ f^{-1}_1)(0)) |  < C 
 \]
 namely,
 \[
 \left|x - \log \frac{c_0}{f_1^{-1}(x)} \right| <C.   
 \]
 Eliminating $c_0$ and $x$, one may get that
 \begin{equation}
  \left| \log \frac{f^{-1}_1(x)}{f^{-1}_0(x)} \right| < 2C . 
  \label{for:lipshitz}
 \end{equation}
 Again by Theorem \ref{thm:minsky}, one may have that
 \begin{equation}
  |d_{\mathcal{T}}(l_0\circ f_0^{-1}(x), l_1\circ f_1^{-1}(x)) - d_{\mathbb{H}}(\phi\circ l_0\circ f_0^{-1}(x), \phi\circ l_1\circ f_1^{-1}(x))| < C. 
  \label{for:dtdh}
 \end{equation}
 Then, one may get that
 \begin{eqnarray}
  & & d_{\mathbb{H}}(\phi\circ l_0 \circ f_0^{-1}(x), \phi\circ l_1 \circ f_1^{-1}(x))\nonumber \\
  &=& d_{\mathbb{H}}\left( \frac{i}{f_0^{-1}(x)}, 1+\frac{i}{f_1^{-1}(x)} \right) \nonumber\,\,\,\,\text{(by (\ref{for:hcoord}))}\\
  & \le & d_{\mathbb{H}}\left( \frac{i}{f_0^{-1}(x)}, 1+\frac{i}{f_0^{-1}(x)} \right) + d_{\mathbb{H}}\left( 1+\frac{i}{f_0^{-1}(x)}, 1+\frac{i}{f_1^{-1}(x)} \right) \nonumber\\
  & \le & 1+2C. \,\,\,\,\text{(by }f_0^{-1}(x) < c_0 <1 \text{ and (\ref{for:lipshitz}) )} \label{for:dh}
 \end{eqnarray}
 Combining (\ref{for:dtdh}) and (\ref{for:dh}), one may get that, for any $x>0$, 
 \[
 d_{\mathcal{T}}(l_0\circ f_0^{-1}(x), l_1\circ f_1^{-1}(x)) \le 1+3C.  
 \]

 Therefore the geodesic rays $L_{0}|_{c \le 1}$ and $L_{1}|_{c \le 1}$ represent the same point in the Gromov boundary. 
The same proof holds for $L_{-1}|_{c \le 1}$ and  $L_0|_{c \le 1}$. The proof is complete.

\end{proof}

One may denote the point on the Gromov boundary represented by $L_{-1}|_{c \le 1}$, $L_0|_{c \le 1}$ and $L_1|_{c \le 1}$ as $\infty$. For the other end of $L_0$, namely the geodesic ray $L_0|_{c \ge 1}$, one may denote the point on the Gromov boundary represented by this ray as $0$. By Lemma \ref{lem:0_coicide}, the end of $L_0|_{c \ge 1}$ is the end of $L^{\alpha}_0|_{c_{\alpha} \le 1}$. By Lemma \ref{lem:limit}, $0$ is also represented by $L^{\alpha}_1|_{c_{\alpha} \le 1}$ and $L^{\alpha}_{-1}|_{c_{\alpha} \le 1}$. 

Recall that Teichm\"uller metric on $\mathcal{T}_{0,4}$ is the hyperbolic metric and $\pmodo_{0,4}$ isometrically acts on $\mathcal{T}_{0,4}$. By the Nielsen-Thurston classification (see \emph{e.g.} \cite{farb2011primer, fathi2021thurston}), $D_{\gamma}$, $D_{\alpha}$ are reducible elements of $\pmodo_{0,4}$. Also $D_{\gamma}(L_{-1}|_{ c\le 1}) = L_1|_{ c\le 1}$ hence $D_{\gamma}(\infty) = \infty$, and $D_{\alpha}(L_{-1}^{\alpha}|_{c_{\alpha} \le 1}) = L_1^{\alpha}|_{c_{\alpha} \le 1}$ hence $D_{\alpha}(0) = 0$. Hence
\begin{claim}
 The Dehn twists $D_{\gamma}$, $D_{\alpha}$ are parabolic elements in $$\pmodo_{0,4}\subseteq \mathrm{Iso}(\mathcal{T}_{0,4})=\mathrm{Iso}(\mathbb{H}^2).$$ In particular, $0$ and $\infty$ are the fixed points of $D_{\alpha}$ and $D_{\gamma}$ respectively. 
 \label{claim:parabolic}
\end{claim}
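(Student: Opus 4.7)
The plan is to classify $D_\gamma$ as a parabolic isometry of $\mathbb{H}^2\cong\mathcal{T}_{0,4}$ via the elliptic/parabolic/hyperbolic trichotomy; the statement for $D_\alpha$ then follows by the symmetric argument in the $(c_\alpha,t_\alpha)$ coordinates, or equivalently by conjugating with the isometry $f$ of Section \ref{sub:coord_change}, which interchanges the boundary points $\infty$ and $0$.

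Ruling out the elliptic case is immediate: $\pmodo_{0,4}$ acts properly discontinuously on $\mathcal{T}_{0,4}$ and so embeds as a discrete subgroup of $\mathrm{Iso}(\mathbb{H}^2)$, in which the stabilizer of any interior point of $\mathbb{H}^2$ is finite; since $D_\gamma$ has infinite order it can fix no interior point. Combined with the relation $D_\gamma(\infty)=\infty$ already established before the claim, only the hyperbolic case remains to exclude.

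To rule out hyperbolic I would exhibit a single orbit whose forward and backward iterates both converge to $\infty\in\partial_\infty\mathcal{T}_{0,4}$, contradicting the fact that a hyperbolic isometry has two distinct attracting and repelling boundary fixed points. Fix $c_0<\epsilon$ with $\epsilon$ as in Theorem \ref{thm:minsky} and set $q_0=(c_0,0)$. Since $D_\gamma(c,t)=(c,t+2c)$, the entire orbit $D_\gamma^n(q_0)=(c_0,2nc_0)$ has constant $\ell_\gamma=2c_0$ and therefore stays on one horocyclic level of the thin part, so Theorem \ref{thm:minsky} applies with a single additive constant $C$ to every pair of iterates simultaneously. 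Under $\phi$ the iterates become $z_n=2n+i/c_0$, and a direct computation using $d_\mathbb{H}(a+ib,c+ib)=2\arcsinh(|a-c|/(2b))$ shows that the Gromov products $(z_n\mid z_m)_{z_0}$ in $\mathbb{H}^2$ diverge to $+\infty$ for any sequences $n_k,m_k\to+\infty$, and symmetrically for $n_k,m_k\to-\infty$. The bounded Minsky error transfers this divergence to Gromov products in $\mathcal{T}_{0,4}$, so both tails of the orbit converge in the Gromov boundary to the point $\infty$, giving the required contradiction.

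The main obstacle is precisely this transfer of Gromov-boundary convergence through the quasi-isometric identification $\phi$, since Theorem \ref{thm:minsky} only controls $\phi$ on the thin part and only up to a bounded additive error. The argument is made clean by the choice of an orbit sitting entirely on a single horocyclic level $\{c=c_0\}$ deep in the thin part, because then the same Minsky constant controls every pair of iterates at once and the Gromov product computation transports without accumulation of errors.
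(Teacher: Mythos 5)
Your proposal is correct in spirit, but it takes a genuinely different route from the paper's, and it has one small gap worth noting.

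The paper's proof of this claim is essentially a one-liner: it observes that $D_\gamma$ and $D_\alpha$ are reducible in the sense of the Nielsen--Thurston classification, invokes (implicitly) the standard correspondence between Nielsen--Thurston types of mapping classes of the four-punctured sphere and isometry types on $\mathcal{T}_{0,4}\cong\mathbb{H}^2$ (periodic $\leftrightarrow$ elliptic, reducible $\leftrightarrow$ parabolic, pseudo-Anosov $\leftrightarrow$ hyperbolic), and reads off the fixed points from the pairings $D_\gamma(L_{-1}|_{c\le 1})=L_1|_{c\le 1}$ and $D_\alpha(L_{-1}^\alpha|_{c_\alpha\le 1})=L_1^\alpha|_{c_\alpha\le 1}$ established via Lemma \ref{lem:limit}. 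You instead verify parabolicity directly from the elliptic/parabolic/hyperbolic trichotomy, ruling out elliptic via discreteness and infinite order (actually the boundary fixed point $D_\gamma(\infty)=\infty$ already rules out elliptic, so the discreteness argument is superfluous), and ruling out hyperbolic by a quantitative argument via Minsky's theorem applied along the horocyclic orbit $\{c=c_0\}$. Both approaches work; yours is longer but more self-contained, while the paper's defers to a classical dictionary.

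The gap: as stated, your Gromov product computation handles only the cases $n_k,m_k\to+\infty$ and $n_k,m_k\to-\infty$. This shows each tail of the orbit is individually a Gromov sequence, hence converges to \emph{some} boundary point $\xi_\pm$, but it does not show $\xi_+=\xi_-$. A hyperbolic element would indeed give $\xi_+\ne\xi_-$, so the contradiction only appears once you know the two limits coincide. You either need the mixed-sign computation (which does diverge: $(z_n\mid z_{-m})_{z_0}=\arcsinh(nc_0)+\arcsinh(mc_0)-\arcsinh((n+m)c_0)\to\infty$), or, more cleanly, you can bypass Gromov products altogether: Minsky gives
\[
d_{\mathcal{T}}\bigl(q_0,\,D_\gamma^{n}q_0\bigr)\le d_{\mathbb{H}}\bigl(z_0,\,z_n\bigr)+C=2\arcsinh(|n|c_0)+C=O(\log|n|),
\]
whereas a hyperbolic isometry with translation length $\tau>0$ satisfies $d(q_0,g^n q_0)\ge|n|\tau-2\,d(q_0,\mathrm{axis}(g))$, a linear lower bound. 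The logarithmic growth rules out hyperbolicity immediately and avoids any boundary-convergence bookkeeping.
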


\begin{lemma}
 The four geodesics $L_{1}$, $L_{-1}$, $L_{1}^{\alpha}$, $L_{-1}^{\alpha}$ are pairwise disjoint. 
 \label{lem:disjoint}
\end{lemma}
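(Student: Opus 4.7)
The plan is to enumerate the $\binom{4}{2}=6$ pairs from $\{L_1, L_{-1}, L_1^\alpha, L_{-1}^\alpha\}$ and rule out each intersection by reading off the defining Fenchel-Nielsen equations together with either the sign constraint (\ref{for:tt_alpha_neg}) or the coordinate-change formulas (\ref{for:c_alpha}) and (\ref{for:t_alpha}). The reflection $r_0(c,t)=(c,-t)$ is an isometry of $\mathcal{T}_{0,4}$ sending $L_1 \leftrightarrow L_{-1}$ and $L_1^\alpha \leftrightarrow L_{-1}^\alpha$ (the latter because $r_0$ acts on the $\alpha$-coordinates by $(c_\alpha,t_\alpha)\mapsto (c_\alpha,-t_\alpha)$), so it suffices to treat one representative of each $r_0$-orbit of pairs.

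First, the two ``same-type'' pairs are immediate: on $L_1$ one has $t=c>0$ whereas on $L_{-1}$ one has $t=-c<0$, so $L_1\cap L_{-1}=\emptyset$, and likewise $L_1^\alpha\cap L_{-1}^\alpha=\emptyset$ in the $(c_\alpha,t_\alpha)$ coordinates. Second, for the two ``same-sign mixed'' pairs, say $L_1 \cap L_1^\alpha$: any common point would satisfy $t=c>0$ and $t_\alpha=c_\alpha>0$, giving $t \cdot t_\alpha >0$, which contradicts (\ref{for:tt_alpha_neg}); the pair $L_{-1}\cap L_{-1}^\alpha$ is handled the same way (or via $r_0$).

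The only nontrivial step is the ``opposite-sign mixed'' pair $L_1 \cap L_{-1}^\alpha$, whose $r_0$-image handles $L_{-1} \cap L_1^\alpha$. For a putative common point $X$, one has $t=c$ and $t_\alpha=-c_\alpha$. Substituting $t=c$ into (\ref{for:c_alpha}) yields $\cosh(c_\alpha/2)=\cosh(s/2)\cosh(c/2)$, and substituting $|t_\alpha|=c_\alpha$ into (\ref{for:t_alpha}) (using that $\cosh$ is even) yields $\cosh(c/2)=\cosh(s_\alpha/2)\cosh(c_\alpha/2)$. Substituting the first identity into the second forces $\cosh(s/2)\cosh(s_\alpha/2)=1$, so $s=s_\alpha=0$; but (\ref{for:seam_cuff}) gives $\sinh(s/2)=\cos(\pi/(g+1))/\sinh(c/2)>0$, a contradiction.

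The only delicate point I foresee is checking that the coordinate-change formulas (\ref{for:c_alpha}) and (\ref{for:t_alpha}) apply at the boundary values $t=c$ and $|t_\alpha|=c_\alpha$ of the standard fundamental domain rather than only in its interior. This is routine: both right-hand sides depend on $|t|$ (respectively $|t_\alpha|$) only through $\cosh$, so the formulas are preserved by the $r_0$-symmetry and extend from $0\le t\le c$ (resp. $0\le t_\alpha\le c_\alpha$) to the closed strip $|t|\le c$ (resp. $|t_\alpha|\le c_\alpha$). Hence the hardest case reduces to the one-line algebraic contradiction above.
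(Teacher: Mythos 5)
Your proof is correct, and it diverges from the paper's in two places worth noting. For the two ``same-type'' pairs $(L_1,L_{-1})$ and $(L_1^\alpha,L_{-1}^\alpha)$, you simply read off the sign of $t$ (resp.\ $t_\alpha$), using that $(c,t)$ is a global coordinate system; the paper instead invokes Lemma~\ref{lem:limit} to say the two geodesics share a Gromov-boundary endpoint and are therefore disjoint. Your argument is more elementary and does not depend on the Gromov boundary analysis. For the four ``mixed'' pairs, the paper runs the same algebraic computation $\cosh\frac{s}{2}\cosh\frac{s_\alpha}{2}=1$ in all four cases (noting that evenness of $\cosh$ absorbs the sign), whereas you split them by sign, dispatching the same-sign pairs with the geometric fact (\ref{for:tt_alpha_neg}) that $t\cdot t_\alpha<0$ when $t\ne 0$, and reserving the algebraic computation for the opposite-sign pairs. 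Both decompositions work; the paper's is slightly more uniform, while yours makes visible which incompatibilities are purely sign-theoretic and which require the length identities (\ref{for:c_alpha})--(\ref{for:t_alpha}). Your cautionary remark about extending the coordinate-change formulas past the interior $0\le t\le c$ is well taken and correctly resolved: the formulas involve $t$ and $t_\alpha$ only through $\cosh$, so they hold on the closure.
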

\begin{proof}
 By Lemma \ref{lem:limit}, $L_{-1}$ is disjoint with $L_1$ and $L_{-1}^{\alpha}$ is disjoint with $L_1^{\alpha}$ inside $\mathcal{T}_{0,4}$. Now we show $L_1$ and $L_1^{\alpha}$ are disjoint. 

 Suppose for contradiction that $p \in L_1\cap L_1^{\alpha}$. Then at $p$ the Fenchel-Nielsen coordinates $(c, t)$ and $(c_{\alpha}, t_{\alpha})$ satisfy that
 \begin{equation}
  \frac{t}{c} = 1;\text{\,\,} \frac{t_{\alpha}}{c_{\alpha}} = 1.  
  \label{for:tc=1}
 \end{equation}
 Then inserting (\ref{for:tc=1}) into (\ref{for:c_alpha}) and (\ref{for:t_alpha}) and multiplying the two formulae, one may get that
 \[
  \cosh \frac{s}{2} \cosh \frac{s_{\alpha}}{2} =1.
 \]
 This implies $s= s_{\alpha} = 0$, which is impossible. 
 The same proof holds for the pairs $(L_1, L_{-1}^{\alpha})$, $(L_{-1}, L_{1}^{\alpha})$ and $(L_{-1}, L_{-1}^{\alpha})$ because $\cosh(t) = \cosh(-t)$. 
 The proof is complete. 
\end{proof}

Now we are ready to prove Proposition \ref{prop:fundamental}. 
\begin{proof}[Proof of Proposition \ref{prop:fundamental}]
 Let 
\begin{eqnarray*}
F_1 &\overset{\mathrm{def}}{=}& \left\{O \in \mathcal{T}_{0,4} \left| \left|\frac{t}{c}\right| \le 1 \right.\right\};\\
F_2 &\overset{\mathrm{def}}{=}& \left\{O \in \mathcal{T}_{0,4} \left| \left|\frac{t_{\alpha}}{c_{\alpha}}\right| \le 1 \right.\right\}. 
\end{eqnarray*}
Then $F_1$ is a fundamental domain of $D_{\gamma}$ and $F_2$ is a fundamental domain of $D_{\alpha}$. 

Recall $L_0$ is the geodesic joining the fixed point of $D_{\gamma}$ (\emph{i.e.} $\infty$) and the fixed point of $D_{\alpha}$ (\emph{i.e.} $0$) in the Gromov boundary. 
By Lemma \ref{lem:limit} and Claim \ref{claim:parabolic}, we have that
\begin{equation}
L_0 \subseteq F_1 \cap F_2. 
\label{for:contain}
\end{equation}
Thus $F_1 \cap F_2 \ne \emptyset$. Moreover, $F_1 \nsubseteq F_2$ and $F_2 \nsubseteq F_1$ (see Figure \ref{fig:fund_dom}). 
This is because $L_0$ separates $\mathbb{H}^2$. The geodesics $L_1$ and $L_{-1}$ lie on different sides of $L_0$ ; $L_1^{\alpha}$ and $L_{-1}^{\alpha}$ also lie on different sides of $L_0$. Let $\partial F_j \subseteq \partial_{\infty}\mathbb{H}^2$ consist of geodesic rays contained in $F_j$ for $j= 1,2$. Then $\infty$ is in the interior of $\partial F_2$, but $\infty$ is an isolated point of $\partial F_1 $. 
Therefore $F_2 \nsubseteq F_1$. 
Conversely, $0$ is in the interior of $\partial F_1$, but $0$ is an isolated point of $\partial F_2$. Therefore $F_1 \nsubseteq F_2$.
\begin{figure}[htbp]
 \centering
 \includegraphics{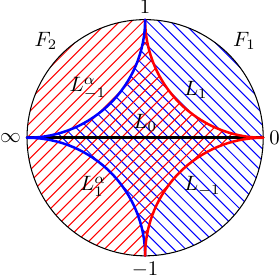}
 \caption{The fundamental domain}
 \label{fig:fund_dom}
\end{figure}

Then $F_1 \cap F_2$ is a convex region in $\mathcal{T}_{0,4}$ bounded by four geodesics, as illustrated in Figure \ref{fig:fund_dom}. By Lemma \ref{lem:disjoint}, these geodesics are pairwise disjoint. For the generators $D_{\alpha}$ and $D_{\gamma}$ of $\pmodo_{0,4}$, $D_{\alpha}$ pairs two edges of $F_1 \cap F_2$, namely the edges $L_{-1}^{\alpha}$ and $L_{1}^{\alpha}$; $D_{\gamma}$ pairs the other two edges of $F_1 \cap F_2$, namely the edges $L_{-1}$ and $L_{1}$. Thus $F=F_1 \cap F_2$ is a fundamental domain of $\pmodo_{0,4} = \langle D_{\alpha}, D_{\gamma} \rangle$. The proof is complete.
\end{proof}
Since $\mathcal{M}_{0,4}$ is a sphere with three punctures, one may get
\begin{claim}
  The fundamental domain $F$ is an ideal quadrilateral. 
 \label{claim:cusp}
\end{claim}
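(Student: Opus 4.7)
The plan is to combine a Gauss--Bonnet area computation with the convexity of $F$ and the pairwise disjointness of its boundary geodesics.

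First, I would compute $\mathrm{Area}(F)$. Since $F$ is a fundamental domain for the isometric action of $\pmodo_{0,4}$ on $\mathcal{T}_{0,4}$, and since $\mathcal{M}_{0,4}$ carries the complete hyperbolic metric of the thrice-punctured sphere by Claim~\ref{claim:dt_dk_dh}, the Gauss--Bonnet theorem applied to $\mathcal{M}_{0,4}$ yields
\[
 \mathrm{Area}(F) = \mathrm{Area}(\mathcal{M}_{0,4}) = -2\pi\chi(\mathcal{M}_{0,4}) = 2\pi.
\]

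Next, I would analyze the boundary of $F$. The domain $F = F_1 \cap F_2$ is convex, being the intersection of two convex regions each bounded by a pair of disjoint geodesics. Its boundary therefore lies on the four geodesics $L_{-1}, L_1, L_{-1}^{\alpha}, L_1^{\alpha}$, which are pairwise disjoint in $\mathbb{H}^2$ by Lemma~\ref{lem:disjoint}. Consequently, consecutive boundary geodesics of $F$ cannot meet in the interior of $\mathbb{H}^2$; if they meet at all, it is on $\partial_{\infty}\mathbb{H}^2$.

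The crux is then to show that consecutive edges actually do meet at infinity, so that $F$ closes up into an ideal polygon rather than flaring open along an arc of the Gromov boundary. Here I would use two inputs together. Claim~\ref{claim:parabolic} already pins down two ideal vertices: $D_{\gamma}$ is parabolic with fixed point $\infty$ and pairs $L_{-1}$ with $L_1$, so these edges share the ideal endpoint $\infty$; similarly $D_{\alpha}$ forces $L_{-1}^{\alpha}$ and $L_1^{\alpha}$ to share the ideal endpoint $0$. For the two remaining corners (where $L_1$ should meet $L_1^{\alpha}$ and where $L_{-1}$ should meet $L_{-1}^{\alpha}$), I would invoke a finite-area argument: any ``open end'' between two disjoint consecutive boundary geodesics of a convex region in $\mathbb{H}^2$ that do not share an ideal endpoint contributes infinite hyperbolic area, contradicting $\mathrm{Area}(F) = 2\pi$. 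Hence both remaining corners are also ideal.

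Finally, I would apply Gauss--Bonnet to $F$ itself, now known to be an ideal polygon with $n$ sides: $\mathrm{Area}(F) = (n-2)\pi$. Combined with $\mathrm{Area}(F) = 2\pi$, this forces $n = 4$, so $F$ is a four-sided ideal polygon, \emph{i.e.}\ an ideal quadrilateral. The main obstacle is the step ruling out open ends at infinity between $L_{\pm 1}$ and $L_{\pm 1}^{\alpha}$; this is where the finite-area input is essential, and it is the one place where the naive ``intersection of two strips'' picture could in principle fail without further input.
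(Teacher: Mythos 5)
Your proof is correct and rests on the same observation the paper invokes---that $\mathcal{M}_{0,4}$ is a thrice-punctured sphere, hence of finite hyperbolic area $2\pi$---but you fill in the details the paper leaves implicit: the Gauss--Bonnet computation of $\mathrm{Area}(F)$, and the half-plane/infinite-area argument ruling out an arc of $\partial_\infty\mathbb{H}^2$ between the two pairs of disjoint boundary geodesics that are not already forced to meet at a parabolic fixed point. The closing Gauss--Bonnet step deducing $n=4$ is harmless but redundant, since the proof of Proposition~\ref{prop:fundamental} already exhibits all four of $L_{\pm 1},L_{\pm 1}^{\alpha}$ as complete boundary edges of $F$.
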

By Claim \ref{claim:cusp} and (\ref{for:tt_alpha_neg}), one may find that $L_1$ and $L_{-1}^{\alpha}$ share an end on $\partial_{\infty}\mathbb{H}^2$ and denote it as $1$. Similarly, $L_{-1}$ and $L_{1}^{\alpha}$ share an end on $\partial_{\infty}\mathbb{H}^2$ and one may denote it as $-1$. See Figure \ref{fig:fund_dom} for an illustration. 

\subsection{The fundamental domain $F_0$}
Consider the Teichm\"uller curve $\pi^{*}(\mathcal{T}_{0,4}) \subseteq \mathcal{T}_g$. Let $\Pi$ be the covering from $\pi^{*}(\mathcal{T}_{0,4})$ to its image in $\mathcal{T}_g/ \modo_g^{\pm}$. 
Based on the fundamental domain $F$ of $\pmodo_{0,4}$, one may show that 
\begin{proposition}
 A fundamental domain of the covering $\Pi$ is $\pi^{*}(F_0) \subseteq \pi^{*}(\mathcal{T}_{0,4})$, where 
\begin{equation}
 F_0  \overset{\mathrm{def}}{=} \{ O\in F | t(O) \ge 0, c(O) \le c_{\alpha}(O)\}. 
\label{for:quad_fund} 
\end{equation}
The domain $\pi^{*}(F_0)$ is the shaded triangle in Figure \ref{fig:fund_dom_2}.
 \label{prop:fundamental_curve}
\end{proposition}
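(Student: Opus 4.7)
The plan is to deduce Proposition \ref{prop:fundamental_curve} from Proposition \ref{prop:fundamental} by identifying two extra involutions whose composition with $\pmodo_{0,4}$ exhausts the image in $\mathrm{Iso}(\pi^{*}(\mathcal{T}_{0,4}))$ of the stabilizer of the Teichm\"uller curve inside $\modo_g^{\pm}$. The two involutions are the reflection $r_0:(c,t)\mapsto(c,-t)$ from Lemma \ref{lem:geod}, which is orientation-reversing and induced by mirror symmetry of every $X\in\pi^{*}(\mathcal{T}_{0,4})$, and the $\alpha$-$\gamma$ swap $f$ from Section \ref{sub:coord_change}, which is orientation-preserving and induced by an isometry of the underlying surface exchanging the multi-curves $\alpha$ and $\gamma$. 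Lemma \ref{lem:modo} is what guarantees that $G:=\langle\pmodo_{0,4},r_0,f\rangle$ really equals this full image; granted that, it only remains to verify that $F_0$ is a fundamental domain of $G$ acting on $\mathcal{T}_{0,4}$.

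First I would check that $\langle r_0,f\rangle\cong\mathbb{Z}/2\oplus\mathbb{Z}/2$ and that this group preserves $F$ and normalizes $\pmodo_{0,4}$. Commutativity: using (\ref{for:tt_alpha_neg}), $r_0$ negates both $t$ and $t_{\alpha}$, while $f$ swaps the pairs $(c,t)\leftrightarrow(c_{\alpha},t_{\alpha})$, so both $r_0\circ f$ and $f\circ r_0$ act by $(c,t,c_{\alpha},t_{\alpha})\mapsto(c_{\alpha},-t_{\alpha},c,-t)$. Each involution preserves $F$: $r_0$ because $F$ is defined by inequalities symmetric in the sign of $t$ and $t_{\alpha}$, and $f$ because it merely swaps the two defining inequalities. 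Finally, $r_0$ inverts each Dehn twist generator of $\pmodo_{0,4}$, while $f$ interchanges $D_{\alpha}$ and $D_{\gamma}$, so both normalize $\pmodo_{0,4}$.

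Next I would tile $F$ by the two fixed loci. The fixed set of $r_0$ is the Teichm\"uller geodesic $L_0=L_0^{\alpha}$ (Lemmas \ref{lem:geod} and \ref{lem:0_coicide}), which dissects the ideal quadrilateral $F$ (Claim \ref{claim:cusp}) into the halves $F\cap\{t\ge 0\}$ and $F\cap\{t\le 0\}$ exchanged by $r_0$. The fixed set of $f$ is $\{c=c_{\alpha}\}$, which is a Teichm\"uller geodesic since it is the fixed locus of an orientation-preserving isometric involution; by (\ref{for:cc_alpha_t0}) it meets $L_0$ transversely at the single point determined by $t=0$ and $\sinh^{2}(c/2)=\cos(\pi/(g+1))$. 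Cutting $F\cap\{t\ge 0\}$ along this geodesic yields the two triangles $F_0=F\cap\{t\ge 0\}\cap\{c\le c_{\alpha}\}$ and $F\cap\{t\ge 0\}\cap\{c\ge c_{\alpha}\}$ exchanged by $f$, so $F$ is tiled by the four $\langle r_0,f\rangle$-translates of $F_0$. Hence $F_0$ is a fundamental domain of $\langle r_0,f\rangle$ acting on $F$, and by Proposition \ref{prop:fundamental} a fundamental domain of $G$ acting on $\mathcal{T}_{0,4}$. Pushing forward by $\pi^{*}$ and invoking Lemma \ref{lem:modo} then gives that $\pi^{*}(F_0)$ is a fundamental domain of $\Pi$.

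The main obstacle is the identification step: proving that $G$ really equals the full image of the Teichm\"uller curve's stabilizer in $\modo_g^{\pm}$, rather than some proper subgroup. This is the content of Lemma \ref{lem:modo}, which classifies the automorphisms of $\mathcal{M}_{0,4}$ extending to automorphisms of the image curve in $\mathcal{M}_g$. The remainder of the argument is essentially the bookkeeping above for the four-fold tiling of $F$.
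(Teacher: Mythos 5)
Your overall strategy is the same as the paper's (appeal to Lemma \ref{lem:modo}(2) and cut the quadrilateral $F$ along the fixed loci of two involutions), but the tiling step contains a genuine error: you have conflated $f$ with $r_1 = r_0\circ f$.

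You assert that ``the fixed set of $f$ is $\{c=c_{\alpha}\}$, which is a Teichm\"uller geodesic since it is the fixed locus of an orientation-preserving isometric involution.'' That is inconsistent: an orientation-preserving involutive isometry of $\mathbb{H}^2$ is an elliptic rotation by $\pi$ and fixes a single \emph{point}, not a geodesic. Indeed, $f$ acts by $(c,t,c_\alpha,t_\alpha)\mapsto(c_\alpha,t_\alpha,c,t)$, so its fixed locus requires both $c=c_\alpha$ and $t=t_\alpha$; since $t\cdot t_\alpha<0$ for $t\neq 0$ by (\ref{for:tt_alpha_neg}), the fixed locus is the single point $\{c=c_\alpha,\,t=0\}$. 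The geodesic $L_{-1,1}=\{c=c_\alpha\}$ is the fixed locus of the \emph{orientation-reversing} reflection $r_1=r_0\circ f$, as the paper states in Section \ref{sec:fund_dom}. Consequently, the claim that $f$ exchanges $F\cap\{t\ge 0,\,c\le c_\alpha\}$ and $F\cap\{t\ge 0,\,c\ge c_\alpha\}$ is also false: $f$ sends the former to $F\cap\{t\le 0,\,c\ge c_\alpha\}$ (it is a $\pi$-rotation, so it swaps diagonally opposite quarters of $F$). The map that exchanges the two triangles with $t\ge 0$ is $r_1$.

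The fix is exactly the paper's argument: cut $F$ by the fixed geodesics of the two reflections $r_0$ and $r_1$. The locus $L_0=\{t=0\}$ is the mirror of $r_0$, so $F\cap\{t\ge 0\}$ is a fundamental domain for $\langle r_0\rangle$; the locus $L_{-1,1}=\{c=c_\alpha\}$ is the mirror of $r_1$ (lying inside $F$ by Lemma \ref{lem:l_pm1}), so $F\cap\{c\le c_\alpha\}$ is a fundamental domain for $\langle r_1\rangle$. Intersecting gives $F_0$ as a fundamental domain for $\langle r_0,r_1\rangle=\langle r_0,f\rangle$ acting on $F$, and hence, combined with Proposition \ref{prop:fundamental} and Lemma \ref{lem:modo}(2), for the covering $\Pi$. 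Everything else in your writeup (normalization of $\pmodo_{0,4}$, identification of the full stabilizer via Lemma \ref{lem:modo}) is fine; only the $f$ versus $r_1$ confusion needs correcting.
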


To prove this proposition, we first introduce a reflection $r_1$ on $\mathcal{T}_{0,4}$ and the Teichm\"uller geodesic fixed by it (denoted as $L_{-1,1}$). 
Recall the order-$2$ rotation $f$ on $\mathcal{T}_{0,4}$ defined in Section \ref{sub:coord_change}. 
Define $r_1$ as $r_0\circ f$. Then with respect to the Fenchel-Nielsen coordinates $(c,t)$ and $(c_{\alpha}, t_{\alpha})$, $r_1$ maps $X\in\mathcal{T}_{0,4}$ to $Y\in\mathcal{T}_{0,4}$ such that
\[
 c(Y) = c_{\alpha}(X); \, t(Y) = -t_{\alpha}(X). 
\]
The Teichm\"uller geodesic fixed by $r_1$ is 
\[
 L_{-1,1} \overset{\mathrm{def}}{=} \left\{ X\in \mathcal{T}_{0,4} | c(X) = c_{\alpha}(X)\right\}. 
\]
On $L_{-1,1}$, $t(X) = -t_{\alpha}(X)$ follows from $c(X) = c_{\alpha}(X)$ by (\ref{for:c_alpha}) and (\ref{for:t_alpha}).
One may describe the ends of $L_{-1,1}$ on $\partial_{\infty}\mathbb{H}^2$. 
\begin{lemma}
The ends of $L_{-1,1}$ on $\partial_{\infty}\mathbb{H}^2$ are $-1$ and $1$. Hence $L_{-1,1} \subseteq F$.  
 \label{lem:l_pm1}
\end{lemma}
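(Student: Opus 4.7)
The plan is to first establish $L_{-1,1}\subseteq F$, so that its two ideal endpoints on $\partial_{\infty}\mathbb{H}^2$ lie among the four vertices $\{0,\infty,1,-1\}$ of the quadrilateral $F$, and then rule out $\{0,\infty\}$ by observing that $L_{-1,1}$ crosses the geodesic $L_0$ transversely at a single interior point.

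For the containment $L_{-1,1}\subseteq F$, I work directly with the coordinate-change identities (\ref{for:c_alpha}) and (\ref{for:t_alpha}). On $L_{-1,1}$ we have $c=c_{\alpha}$, so (\ref{for:c_alpha}) becomes $\cosh(c/2)=\cosh(s/2)\cosh(t/2)$, and since $\cosh(s/2)\ge 1$ this forces $\cosh(c/2)\ge\cosh(t/2)$, \emph{i.e.}\ $|t|\le c$. Hence $L_{-1,1}\subseteq F_1$. Applying the same argument to (\ref{for:t_alpha}) (again using $c=c_{\alpha}$) gives $|t_{\alpha}|\le c_{\alpha}$, so $L_{-1,1}\subseteq F_2$, and therefore $L_{-1,1}\subseteq F_1\cap F_2=F$. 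In particular, the two ideal endpoints of $L_{-1,1}$ are contained in the vertex set $\{0,\infty,1,-1\}$.

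Next I compute $L_0\cap L_{-1,1}$. Imposing $t=0$ on $L_{-1,1}$ and using the identity (\ref{for:cc_alpha_t0}) together with $c=c_{\alpha}$, I obtain $\sinh^2(c/2)=\cos(\pi/(g+1))$, whose unique positive solution is $c=2\arcsinh\sqrt{\cos(\pi/(g+1))}$. Thus $L_0$ and $L_{-1,1}$ are distinct geodesics meeting transversely at exactly one point of $\mathbb{H}^2$, and therefore share no ideal endpoint. Since the ideal endpoints of $L_0$ are $0$ and $\infty$, those of $L_{-1,1}$ must lie in $\{1,-1\}$; being two distinct points of $\partial_{\infty}\mathbb{H}^2$, they are exactly $1$ and $-1$.

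No step here presents a real obstacle: both ingredients reduce to one-line manipulations of the hyperbolic identities from Section \ref{sec:pre}. The only mild subtlety is keeping straight the convention that $1$ and $-1$ denote the two ideal vertices of $F$ distinct from $\{0,\infty\}$, but this is exactly the labelling fixed in the paragraph following Claim \ref{claim:cusp}.
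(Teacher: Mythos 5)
Your proof is correct and follows essentially the same route as the paper: confine $L_{-1,1}$ to the ideal quadrilateral $F$, so its ideal endpoints lie among $\{0,1,\infty,-1\}$, and then eliminate $0$ and $\infty$ by the transverse crossing with $L_0$ at the unique point supplied by (\ref{for:cc_alpha_t0}). The only (cosmetic) difference is in how the confinement is obtained: the paper argues by contradiction that $L_{-1,1}$ is disjoint from each of the bounding geodesics $L_{\pm1}$, $L_{\pm1}^{\alpha}$ (meeting, say, $L_1$ would force $c=t=c_{\alpha}$, hence $s=0$ via (\ref{for:c_alpha})), while you read off $|t|<c$ and $|t_{\alpha}|<c_{\alpha}$ directly from (\ref{for:c_alpha}) and (\ref{for:t_alpha}) using $\cosh(s/2)>1$ and $\cosh(s_{\alpha}/2)>1$ — the same algebraic fact viewed from the opposite end, and equally valid.
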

\begin{proof}
 To prove this lemma, it is sufficient to verify the following two statements. 
 \begin{enumerate}
  \item The geodesics $L_{-1,1}$ and $L_0$ intersect once. 
  \item The geodesics $L_{-1,1}$ is disjoint with $L_{1}$, $L_{-1}$, $L^{\alpha}_{1}$ and $L^{\alpha}_{-1}$. 
 \end{enumerate}
 The statement $(1)$ follows immediately from (\ref{for:cc_alpha_t0}). 
 To show $(2)$, by the symmetry of $r_0$ and $r_1$, it is sufficient to prove that
 \[
  L_{-1,1} \cap L_1 = \emptyset. 
 \]
 Suppose for contradiction that there is a point $X \in L_{-1,1} \cap L_1$. Then consider the Fenchel-Nielsen coordinates $(c,t)$ and $(c_{\alpha}, t_{\alpha})$ at $X$, one may have
 \[
  c = t = c_{\alpha}. 
 \]
 Then by (\ref{for:c_alpha}), one may have
 \[
  s = 0, 
 \]
 which is impossible. The proof is complete. 
\end{proof}
One may check that the rotation $f$ is an order-$2$ rotation of the ideal quadrilateral $F$. 
Thus $f$ induces an involution $\bar{f}$ of $\mathcal{M}_{0,4}$.
Similarly, denote the reflections in $\mathcal{M}_{0,4}$ induced by $r_0$ and $r_1$ as $\bar{r_0}$ and $\bar{r_1}$ respectively. 
One may show that 
\begin{lemma}
 \begin{enumerate}
  \item The image of $\pi^{*}(\mathcal{T}_{0,4}) $ in $\mathcal{M}_g$ is isomorphic to $\mathcal{M}_{0,4}/ \left<\bar{f} \right>$.
  \item The image of $\pi^{*}(\mathcal{T}_{0,4}) $ in $\mathcal{T}_g/ \modo^{\pm}_g$ is isomorphic to $\mathcal{M}_{0,4}/\left< \bar{r_0}, \bar{r_1} \right>$.
 \end{enumerate}
   \label{lem:modo}
\end{lemma}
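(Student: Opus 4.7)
The plan is to identify $V := \pi^{*}(\mathcal{T}_{0,4})$ with $\mathcal{T}_{0,4}$ and then compute the setwise stabilizers $G := \mathrm{Stab}_{\modo_g}(V)$ and $G^{\pm} := \mathrm{Stab}_{\modo_g^{\pm}}(V)$, since the image of $V$ in $\mathcal{M}_g$ (resp.\ in $\mathcal{T}_g/\modo_g^{\pm}$) equals $V/G$ (resp.\ $V/G^{\pm}$). Both $G$ and $G^{\pm}$ naturally contain a copy of $\pmodo_{0,4}$: each element of $\pmodo_{0,4}$ fixes the four cone points of $O$ pointwise, and hence lifts canonically to a $\rho$-equivariant mapping class on $X$. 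It will therefore suffice to identify the quotients $G/\pmodo_{0,4}$ and $G^{\pm}/\pmodo_{0,4}$ as isometry subgroups of $\mathcal{M}_{0,4}$, after which the two conclusions follow immediately from $\mathcal{M}_{0,4}=\mathcal{T}_{0,4}/\pmodo_{0,4}$.

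For the forward inclusions I would exhibit explicit $\rho$-equivariant diffeomorphisms of $X$ that realize $f$, $r_0$ and $r_1$ on the Teichm\"uller curve. The involution $f$ from Section~\ref{sub:coord_change} extends, by construction, to an orientation-preserving diffeomorphism of $X$ which exchanges the multi-geodesics $\alpha$ and $\gamma$ and commutes with $\rho$, so it defines a class in $\modo_g$ that preserves $V$ and induces $\bar f$ on $\mathcal{M}_{0,4}$. The reflection $r_0$ is realized by the $\rho$-equivariant orientation-reversing involution of $X$ that pointwise fixes the seam-configuration of $X_0$ and reverses the twist parameter, while $r_1 = r_0 \circ f$ is the composition of $r_0$ with the $\alpha \leftrightarrow \gamma$ swap. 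Since $\bar f = \bar r_0 \circ \bar r_1$, this yields $\langle\bar f\rangle \subseteq G/\pmodo_{0,4}$ and $\langle \bar r_0, \bar r_1\rangle \subseteq G^{\pm}/\pmodo_{0,4}$.

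For the reverse inclusions I would use that any $\psi \in G^{\pm}$ acts by isometries on $V \cong \mathcal{T}_{0,4}$ in the Teichm\"uller (equivalently, hyperbolic) metric, and hence descends to an isometry of $\mathcal{M}_{0,4}$ that permutes its three cusps, which correspond to pinching $\alpha$, $\beta$ and $\gamma$. The crucial observation will be that these three cusps are topologically distinguished on the orbifold $O$: the curves $\alpha_O$ and $\gamma_O$ are simple closed geodesics disjoint from the cone points, whereas $\beta_O$ is a figure-eight geodesic passing through two of the four cone points of $O$. Consequently any such $\psi$, once known to descend to an orbifold diffeomorphism of $O$, must preserve the set $\{\alpha_O, \gamma_O\}$ and fix $\beta_O$; the only non-trivial cusp permutation it can realize is the transposition $\alpha \leftrightarrow \gamma$, together with possible reflections in the orientation-reversing case. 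Combined with the forward step this pins down $G/\pmodo_{0,4} = \langle\bar f\rangle$ and $G^{\pm}/\pmodo_{0,4} = \langle\bar r_0, \bar r_1\rangle$.

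The hardest step will be justifying that every $\psi \in G^{\pm}$ indeed descends, up to $\langle\rho\rangle$, to a genuine orbifold diffeomorphism of $O$; this reduces to a uniqueness statement for the order-$(g+1)$ rotation $\rho$ on a generic surface of $V$ (so that $\psi$ has no choice but to normalize $\langle\rho\rangle$). With that uniqueness in hand, the topological distinction between $\beta_O$ and $\alpha_O, \gamma_O$ rules out any further permutation of the cusps and completes both parts of the lemma.
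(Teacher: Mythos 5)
Your overall framework (compute the stabilizer of the Teichm\"uller curve, exhibit $f$, $r_0$, $r_1$ for the forward inclusion, and analyze the induced action on $\mathcal{M}_{0,4}$ for the reverse) matches the paper's strategy. However, there is a genuine error in the reverse inclusion. You identify the three cusps of $\mathcal{M}_{0,4}$ with pinching $\alpha$, $\beta$, $\gamma$, and distinguish the third cusp by the fact that $\beta_O$ is a figure-eight geodesic passing through cone points. But $\beta_O$ is not a simple closed curve on $O$, so it cannot be a cusp curve at all: the three cusps correspond to pinching the three $\pmodo_{0,4}$-orbits of simple closed curves, namely $\alpha$, $\gamma$, and $\delta$ (with $\delta$ and $D_{\gamma}^{-1}(\delta)$ giving the single cusp $\pm 1$). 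Indeed, near the ideal point $1$ of $F$ one has $t \to c$ and $c \to \infty$, so both $\ell_{\beta}$ and $\ell_{\delta}$ tend to $0$, but only $\delta$ is simple and is what actually identifies the cusp.

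Because of this, the topological distinction you cite (figure-eight versus simple, passing through cone points versus not) does not apply to the actual third cusp curve $\delta$, which is a simple closed geodesic and in fact (from the length formula $\ell_{\delta_1}(X) = (g+1)\,\ell_{\delta}(O)$ in the $g$ even case) does not pass through cone points either. The paper's distinguishing feature, which you would need to use instead, is a counting argument on $X$: $\delta$ lifts to $1$ or $2$ components in $X$, while $\alpha$ and $\gamma$ each lift to $g+1 \ge 3$ components, so no element of $\modo_g^{\pm}$ preserving $\pi^{*}(\mathcal{T}_{0,4})$ can send the systole-degeneracy at $\pm 1$ to that at $0$ or $\infty$. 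Once you make this counting observation directly on $X$, your "hardest step" --- showing that every $\psi \in G^{\pm}$ descends to an orbifold diffeomorphism of $O$ via uniqueness of $\rho$ --- becomes unnecessary, and indeed the paper does not need it.
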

\begin{proof}
 $(1)$ Consider the ideal quadrilateral $F$ in Figure \ref{fig:fund_dom}. 
 One may check that its 
  four ideal points $0$,  $\infty$, $1$, $-1$ correspond to the surfaces pinching $\alpha$, $\gamma$, $\delta$ and $D_{\gamma}^{-1}(\delta)$ respectively by (\ref{for:gamma}), (\ref{for:alpha}), (\ref{for:delta_odd}) and (\ref{for:delta_even}). 

 For the $3$-punctured sphere $\mathcal{M}_{0,4} \cong \mathcal{T}_{0,4}/ \left<D_{\alpha}, D_{\gamma} \right>$, two of its punctures correspond to $0$ and $\infty$ respectively. The last puncture is glued from $1$ and $-1$, and one may denote it as $\pm 1$. 
The automorphism group of $\mathcal{M}_{0,4}$ is isomorphic to the permutation group $S_3$ that permutes the three punctures.

If a permutation does not fix the puncture $\pm1$, 
then it does not induce an automorphism of $\pi^{*}(\mathcal{T}_{0,4}) \subseteq \mathcal{T}_g$, 
since in $\pi^{*}(\mathcal{T}_{0,4})$, $\delta$ consists of $1$ or $2$ geodesics; while both $\alpha$ and $\gamma$ consist of $g+1$ geodesics, where $g \ge 2$. 
Thus in the automorphism group of $\mathcal{M}_{0,4}$, the only non-trivial element that induces an automorphism of $\pi^{*}(\mathcal{T}_{0,4})$ is the automorphism $\bar{f}$, which permutes $0$ and $\infty$, fixes $\pm1$. Thus $(1)$ is proved. 

$(2)$ The statement $(2)$ follows directly from the facts $\left<\bar{r_0}, \bar{f} \right> = \left<\bar{r_0}, \bar{r_1} \right>$ and $\modo^{\pm}_g/\modo_g \cong \left<\bar{r_0} \right>$. The proof is complete. 

\end{proof}
\begin{proof}[Proof of Proposition \ref{prop:fundamental_curve}]
 It follows directly from Lemma \ref{lem:modo} $(2)$, because the domain $\left\{ O\in F | t \ge 0 \right\} $ is a fundamental domain of $r_0$ and the domain $\left\{ O\in F | c \le c_{\alpha} \right\} $ is a fundamental domain of $r_1$. 
\end{proof}

\section{Classification of systoles}
\label{sec:sys_cand}

For $X\in \mathcal{T}_g$, a \emph{systole} of $X$ is one of the shortest geodesics in $X$. One may denote by $\mathrm{S}(X)$ the set of systoles on  $X$. In $X$, a set of simple closed geodesics is \emph{filling} if the geodesics cut $X$ into polygonal disks. The \emph{Thurston spine} in $\mathcal{T}_g$ consists of the surfaces $X$, whose $\mathrm{S}(X)$ is filling, and one may denote it as $\mathcal{P}_g$. 
The main result of this section is 

\begin{proposition}
 For $X \in \mathcal{P}_g \cap \pi^{*}(F_{0})$, one may have
 \[
  \mathrm{S}(X) \subseteq \{ \alpha, \beta, \gamma\}. 
 \]
 \label{prop:sys_cand}
\end{proposition}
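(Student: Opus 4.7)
The plan is to follow the three-step strategy sketched in the introduction. Fix $X\in \pi^{*}(F_0)\cap \mathcal{P}_g$ and let $\sigma\in \mathrm{S}(X)$ be an arbitrary systole.

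\textbf{Step 1 (Lemma \ref{lem:family_sys_cand}).} The systole set $\mathrm{S}(X)$ is invariant under the isometry group of $X$, in particular under $\rho$ and the reflection $r_0$. Hence every systole $\sigma$ descends to a closed geodesic $\bar\sigma$ on the orbifold $O=X/\langle\rho\rangle$. I will enumerate the possible $\bar\sigma$ up to the orbifold pure mapping class group $\pmodo_{0,4}=\langle D_\alpha,D_\gamma\rangle$, distinguishing between $\bar\sigma$ simple on $O$ (whose lift is a $\rho$-orbit of $g+1$ parallel simple geodesics on $X$, giving $\alpha$, $\gamma$, $\delta$) and $\bar\sigma$ non-simple on $O$ but whose lifts are still simple on $X$ (giving $\beta$, and possibly one further figure-eight type). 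This produces four candidate families, and within each the $\langle\rho,r_0\rangle$-symmetry forces at most one systole up to the symmetry.

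\textbf{Step 2 (Proposition \ref{prop:four_sys_cand}).} Using the constraints $0\le t\le c$ and $c\le c_\alpha$ defining $F_0$, together with the monotonicity formulas (\ref{for:pd_alpha}) and (\ref{for:pd_beta}), I will show that on $\pi^{*}(F_0)$ each of the four families specializes to one of the explicit multi-geodesics $\alpha$, $\beta$, $\gamma$, $\delta$. For instance $t\ge 0$ selects $\beta=D_\gamma^{-1}(\alpha)$ over $D_\gamma(\alpha)$ among the $D_\gamma$-orbit of $\alpha$, and the condition $c\le c_\alpha$ plays the symmetric role on the $\alpha$-side via the coordinate change of Section \ref{sub:coord_change}.

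\textbf{Step 3 (Lemma \ref{lem:delta_not}).} Finally I rule out $\delta$. The key leverage is the factor $2(g+1)$ or $4(g+1)$ appearing in (\ref{for:delta_even}) and (\ref{for:delta_odd}) versus the factor $2$ in (\ref{for:beta}): from $\cosh\frac{\ell_\delta}{2(g+1)}\ge \cosh\frac{s}{2}$ one already gets $\ell_\delta\ge (g+1)s$, while $\ell_\alpha,\ell_\beta,\ell_\gamma$ scale only linearly in the parameters $s$, $c$, $c_\alpha$. I will split $F_0$ into the region where $\ell_\delta>\min(\ell_\alpha,\ell_\beta,\ell_\gamma)$, on which $\delta\notin \mathrm{S}(X)$ is immediate, and the complementary region. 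On the latter, $\delta$ consists of at most two curves disjoint from $\beta$ and cannot by itself fill a surface of genus $g\ge 2$, so the filling hypothesis $X\in\mathcal{P}_g$ forces a tie in length with one of $\alpha,\beta,\gamma$; a refined comparison via (\ref{for:seam_cuff})--(\ref{for:beta}) will exclude this tie. I expect this to be the main obstacle: the uniform comparison is delicate near the cusps of $F_0$, where Fenchel--Nielsen parameters degenerate under the constraint (\ref{for:seam_cuff}), and the amplifying factor $g+1\ge 3$ in (\ref{for:delta_even}) and (\ref{for:delta_odd}) must be tracked carefully through a case analysis on $c$, $s$, and $t$ to make the inequality uniform.
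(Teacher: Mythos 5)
Your outline matches the paper's three-step structure, but several of the key mechanisms are misidentified or missing, and as stated the plan would not close the argument.

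In Step 1 you invoke $\rho$ and ``$r_0$'' as isometries of the surface $X$, but $r_0$ is a reflection of the Teichm\"uller space $\mathcal{T}_{0,4}$, not an isometry of a given $X$ (except on the locus it fixes). The paper's classification uses the genuine isometry group $\langle\rho,\sigma,\tau\rangle$ of $X$ ($\tau$ the hyperelliptic involution, $\sigma$ the extra involution exchanging the two $(2g+2)$-gons) and passes to the $(2,2,2,g{+}1)$-orbifold $O_3=X/\langle\rho,\sigma,\tau\rangle$, not merely to $O=X/\langle\rho\rangle$. More importantly, the reason there are only four families and at most one systole per family is not symmetry alone but Lemma~\ref{lem:intersect}: two curves whose $O_3$-images cross at a regular point lift through the double branched cover $p_1:X\to O_1$ to geodesics meeting at least twice, contradicting Claim~\ref{claim:once_intersect}. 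Without this intersection argument, your ``enumeration'' of arcs in $O$ gives neither the list $L_{CD},L_{CE},L_{DE},L_C$ nor the per-family uniqueness.

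In Step 2, the monotonicity formulas \eqref{for:pd_alpha}, \eqref{for:pd_beta} do not by themselves tell you which member of a $\langle D_\alpha,D_\gamma\rangle$-orbit is shortest at a given point of $F_0$. The paper gets this from two extra ingredients you omit: first, Lemma~\ref{lem:section} shows $\gamma\in\mathrm{S}(X)$ for $X\in\pi^{*}(F_0)\cap\mathcal{P}_g$ via a section $s:\mathcal{P}_{0,4}^{\pm}\to\mathcal{P}_g\cap\pi^{*}(F_0)$; second, a length comparison performed inside the right-angled pentagon $P$ (a fundamental domain of $O_3$), where one reflects segments of a candidate $\eta$ to show it is longer than the distinguished arc $CD$, $CE$, $C$, or $DE$. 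Your heuristic about $t\ge0$ selecting $\beta$ over $D_\gamma(\alpha)$ is in the right spirit, but it is the pentagon geometry, anchored by $\gamma\in\mathrm{S}(X)$, that actually carries the argument.

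In Step 3, you correctly spot the $g{+}1$ amplification from \eqref{for:delta_even}, \eqref{for:delta_odd}, but you do not distinguish the even and odd $g$ cases, which the paper treats quite differently. For even $g$ no length estimate is needed: $\delta=\delta_1$ is a single geodesic meeting each $\alpha_i$ and $\gamma_i$ twice, so if $\delta_1\in\mathrm{S}(X)$ then $\alpha,\gamma\notin\mathrm{S}(X)$ by Claim~\ref{claim:once_intersect}, leaving $\mathrm{S}(X)\subseteq\{\beta,\delta\}$ which fails to fill. For odd $g$, the estimate you sketch needs the constraint $\ell_{\delta}=\ell_{\gamma}\le\ell_{\alpha},\ell_{\beta}$, which is only available because $\gamma\in\mathrm{S}(X)$ was established in Step~2; from this and \eqref{for:alpha}, \eqref{for:delta_odd} the paper deduces $u\ge\frac12$, and combined with Lemma~\ref{lem:u_implicit} this forces $c\le c_{1/2}$ and hence the explicit numerical bounds. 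Your plan to split $F_0$ into a region where $\ell_{\delta}$ is already longest and a complementary region is reasonable, but without the anchor $\gamma\in\mathrm{S}(X)$ you have no handle on the ``tie'' region, and a uniform case analysis on $c,s,t$ over all of $F_0$ would indeed be delicate to make precise.
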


The proof of this proposition is divided into two parts. First we show $\mathrm{S}(X) \subseteq \{ \alpha, \beta, \gamma, \delta\}$ (Proposition \ref{prop:four_sys_cand}). Then we show $\delta \notin \mathrm{S}(X)$ (Lemma \ref{lem:delta_not}). 

\subsection{Symmetry of the concerned surface and quotient orbifolds}

For $X \in \pi^{*}(\mathcal{T}_{0,4}^{g})$, recall that $X$ is glued from two $(g+1)$-holed spheres admitting the order-$(g+1)$ rotation $\rho$. Besides the rotation $\rho$,  there are two order-$2$ rotations acting on $X$. One is the hyperelliptic involution, exchanging the two $(g+1)$-holed spheres, denoted as $\tau$. The other involution (denoted as $\sigma$) can be restricted to an involution on one of the $(g+1)$-holed spheres, exchanging the two $(2g+2)$-gons (see Figure \ref{fig:exp_1_4}). 

\begin{figure}[htbp]
 \centering
 \includegraphics{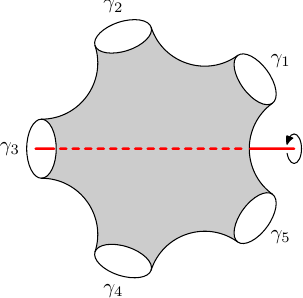}
 \caption{The rotation $\sigma$}
 \label{fig:exp_1_4}
\end{figure}

We describe three orbifolds induced by (subgroups of) $\langle \rho, \sigma, \tau \rangle$, which turns out to be useful in the following proofs. 

Let's denote by $O_1$ the orbifold $X/ \langle \tau \rangle$. As $\tau$ is a hyperelliptic involution, this orbifold has a spherical underlying space and $(2g+2)$ singular points of index $2$. One may denote the singular points as $C_1$, $C_2$, ..., $C_{2g+2}$ respectively, as illustrated in Figure \ref{fig:cyclic_cover_n_4}. 
\begin{figure}[htbp]
 \centering
 \includegraphics{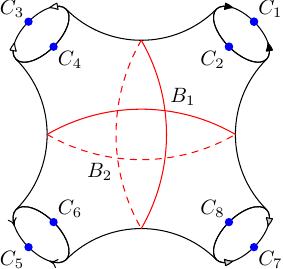}
 \caption{The orbifold $O_1$}
 \label{fig:cyclic_cover_n_4}
\end{figure}

The next orbifold is $O_1/ \langle \rho \rangle = X/ \left<\rho, \tau \right>$, denoted as $O_2$. This orbifold has a spherical underlying space and four singular points. Two of the points have indices $2$ (denoted as $C_1$ and $C_2$ by a little abuse of notation), while two of them have indices $g+1$ (denoted as $B_1$ and $B_2$), as illustrated in Figure \ref{fig:22nn_222n_1}. The preimages of $C_1$ and $C_2$ in $O_1$ consist of the $2g+2$ index-$2$ singular points of $O_1$, while the preimages of $B_1$ and $B_2$ are two regular points respectively. 
\begin{figure}[htbp]
 \centering
 \includegraphics{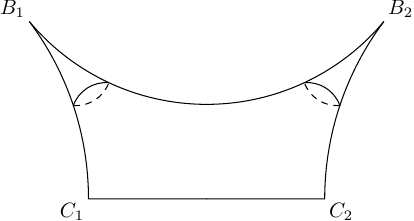}
 \caption{The orbifold $O_2$}
 \label{fig:22nn_222n_1}
\end{figure}

The third orbifold is $O_2/ \langle \sigma \rangle = X/ \left<\rho, \tau, \sigma \right>$, denoted as $O_3$. This orbifold has a spherical underlying space and four singular points. Three of the points have indices $2$ (denoted as $C$, $D$ and $E$), while one of them has index $g+1$ (denoted as $B$), as illustrated in Figure \ref{fig:22nn_222n_2}. In $O_2$, the preimage of $C$ consists of the singular points $C_1$ and $C_2$, and the preimage of $B$ consists of the singular points $B_1$ and $B_2$. On the other hand, in $O_2$ the preimages of $D$ and $E$ are two regular points respectively. 
\begin{figure}[htbp]
 \centering
 \includegraphics{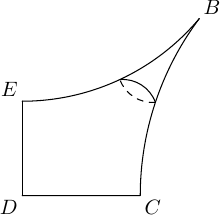}
 \caption{The orbifold $O_3$}
 \label{fig:22nn_222n_2}
\end{figure}

We list the orbifolds and some coverings among them. The coverings $p_1$ and $p_3$ will be repeatedly used in the following subsections. 
\begin{equation*}
\begin{tikzcd}
X \ar[r, "p_1"]  \ar[rrr, bend right=6mm, "p_3"]
& O_1=X/ \left<\tau \right> \ar[r ] 
& O_2=X/ \left<\rho, \tau \right> \ar[r] 
& O_3=X/ \left< \rho, \tau, \sigma \right>. 
\end{tikzcd}
\end{equation*}

\subsection{Families of possible systoles}\label{sub:family_cand}
In this subsection, 
we define four families of simple closed geodesics on $X$, according to their images in $O_3$. We show that $\mathrm{S}(X)$ is contained in these four families (Lemma \ref{lem:family_sys_cand}). 

A well-known fact about systoles is 
\begin{claim}
 Any systole is simple. Two systoles intersect at most once. 
 \label{claim:once_intersect}
\end{claim}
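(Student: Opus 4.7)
The plan is to prove each statement by a standard surgery-and-smoothing argument, cutting the curves at intersection points and comparing lengths to the systole.

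For the first statement, suppose for contradiction that a systole $\gamma$ of length $s$ has a self-intersection at a point $p$. Then $\gamma$ decomposes as the concatenation of two piecewise-geodesic loops $\gamma_1, \gamma_2$ based at $p$ with corners at $p$, and $\ell(\gamma_1) + \ell(\gamma_2) = s$. Since the closed geodesic $\gamma$ is not null-homotopic, the product $[\gamma_1]\cdot[\gamma_2] = [\gamma]$ is non-trivial, so at least one $\gamma_i$ is essential. Smoothing the corner at $p$ produces a smooth essential loop strictly shorter than $\gamma_i$, whose geodesic representative has length strictly less than $\ell(\gamma_i) \le s$, contradicting that $\gamma$ is a systole.

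For the second statement, let $\alpha \ne \beta$ be two systoles of common length $s$ meeting in at least two points, pick two of them $p\ne q$, and write $\alpha = \alpha_1 \cup \alpha_2$, $\beta = \beta_1 \cup \beta_2$ where $\alpha_i, \beta_j$ are geodesic arcs from $p$ to $q$. Form the four piecewise-geodesic loops $c_{ij} = \alpha_i \cdot \overline{\beta_j}$, each with corners at $p$ and $q$. Summing in diagonal pairs,
\begin{equation*}
\ell(c_{11}) + \ell(c_{22}) \;=\; \ell(c_{12}) + \ell(c_{21}) \;=\; \ell(\alpha) + \ell(\beta) \;=\; 2s,
\end{equation*}
so at least one $c_{ij}$ has length at most $s$. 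Every $c_{ij}$ is essential: if $c_{ij}$ were null-homotopic, then $\alpha_i$ and $\beta_j$ would be homotopic rel endpoints, hence equal by uniqueness of the geodesic representative rel endpoints in a hyperbolic surface, which would force $\alpha$ and $\beta$ to agree globally by unique continuation of geodesics. Smoothing the corners of the short $c_{ij}$ strictly decreases its length, so its geodesic representative has length strictly less than $s$, contradicting the definition of $s$ as the systole length.

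The main obstacle is the essentiality step in the second part, since without it the surgery is vacuous: a short piecewise-geodesic loop that is null-homotopic yields no contradiction. The key tool is the uniqueness of the geodesic representative of an arc rel endpoints on a hyperbolic surface, which rules out a bigon between two distinct simple closed geodesics and more generally prevents them from sharing any arc.
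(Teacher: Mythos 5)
The paper does not actually prove Claim~\ref{claim:once_intersect}; it records it as ``a well-known fact about systoles'' and moves on. Your write-up supplies the standard cut-and-paste proof, and it is correct: decomposing a self-intersecting geodesic at a double point into two loops, observing that at least one is essential and strictly shorter than the systole length, and (for the second statement) pairing the four arcs between two intersection points into piecewise-geodesic loops whose lengths sum to $2s$, using uniqueness of the geodesic arc rel endpoints in $\mathbb{H}^2$ to rule out the null-homotopic case, and then strictly shortening by removing the corner. One small remark: in the first part the smoothing step is not really needed — once you know $\gamma_i$ is essential with $\ell(\gamma_i)<s$, its geodesic representative already has length $<s$ — but it does no harm. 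You also implicitly use that two distinct closed geodesics on a hyperbolic surface meet transversally (tangency would force them to coincide by unique continuation), which is exactly what guarantees the corners of $c_{ij}$ are genuine and the shortcut is strict; it would be worth saying this in one line. Overall the argument is the same one found in the standard references that the paper is implicitly invoking.
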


Consider the double-branched cover $p_1:X \to O_1$. 
For a simple closed geodesic $\alpha \subseteq X$, its image $p_1(\alpha) \subseteq O_1$ is either a closed geodesic (if not passing through the singular points of $O_1$), or an arc joining two singular points (if passing through the singular points of $O_1$). 
Given $\alpha, \beta \subseteq X$, if $p_1(\alpha)$ and $p_1(\beta)$ are both closed geodesics or both arcs, then one may say they are \emph{of the same type}. 
\begin{lemma}
 \begin{enumerate}
  \item For a simple closed geodesic $\alpha \subseteq X$, if $p_1(\alpha)$ intersects itself at some regular points of $O_1$, then $\alpha$ is not a systole. 
\item For the simple closed geodesics of the same type, $\alpha, \beta \subseteq X$ of the same type, if $p_1(\alpha)$ intersects $p_1(\beta)$ at some regular points of $O_1$, then either $\alpha$ or $\beta$ is not a systole. 
 \end{enumerate}
 \label{lem:intersect}
\end{lemma}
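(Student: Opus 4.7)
The plan is to leverage the hyperelliptic involution $\tau$ on $X$, which is the deck transformation of $p_1: X\to O_1$ and acts as an isometry, together with Claim \ref{claim:once_intersect}. Since $\tau$ is an isometry, it carries any systole to a systole, so the set $\mathrm{S}(X)$ is $\tau$-invariant.

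For Part (1), I would fix a regular self-intersection point $p$ of $p_1(\alpha)$, with preimages $p_1^{-1}(p) = \{q_1, q_2\}$. Because $p_1$ is a local diffeomorphism at each $q_i$, the two distinct tangent directions of $p_1(\alpha)$ at $p$ lift uniquely to tangent vectors $v_1 \in T_{q_1}\alpha$ and $v_2\in T_{q_2}\alpha$ with $dp_1(v_1) \ne \pm dp_1(v_2)$. From $p_1\circ \tau = p_1$, the pushforward $d\tau(v_1)\in T_{q_2}X$ projects to $dp_1(v_1)$, hence lifts a tangent line at $p$ different from the one containing $v_2$; in particular $d\tau(v_1)$ is not tangent to $\alpha$ at $q_2$, so $\tau(\alpha)\ne \alpha$. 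Then $\alpha$ and $\tau(\alpha)$ are distinct simple closed geodesics of equal length sharing the two transverse intersection points $q_1, q_2$; if $\alpha$ were a systole then $\tau(\alpha)$ would be too, and they would meet twice, contradicting Claim \ref{claim:once_intersect}.

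For Part (2), I would suppose $\alpha, \beta$ are distinct systoles of the same type with a regular intersection $p$ of their $p_1$-images and derive a contradiction. When both images are arcs joining branch points of $O_1$, each of $\alpha, \beta$ is automatically $\tau$-invariant (the double-branched preimage of such an arc is one simple closed geodesic), so each passes through both preimages $q_1, q_2$ of $p$; hence $|\alpha\cap\beta|\ge 2$, contradicting Claim. When both images are closed geodesics, Part (1) forces $p_1(\alpha), p_1(\beta)$ to be simple closed curves on the topological sphere underlying $O_1$. I would split further by $\tau$-invariance: if at least one of $\alpha, \beta$ (say $\alpha$) is $\tau$-invariant then $\alpha$ contains both preimages of every regular intersection, and combined with the sphere parity (two simple closed curves on $S^2$ meet in an even number of points, so at least two when they meet at all), this produces at least two points in $\alpha\cap \beta$, contradicting Claim.

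The hard part is the remaining subcase where neither $\alpha$ nor $\beta$ is $\tau$-invariant, in which $\{\alpha,\tau(\alpha)\}$ and $\{\beta,\tau(\beta)\}$ are disjoint pairs of distinct systoles. Each regular intersection downstairs lifts to exactly two intersection points upstairs, distributed among the four pairs $\{\alpha,\tau(\alpha)\}\times\{\beta,\tau(\beta)\}$; using $\tau$-equivariance one obtains
\[
 |\alpha\cap\beta| + |\alpha\cap\tau(\beta)| = |p_1(\alpha)\cap p_1(\beta)|.
\]
Combining the sphere parity (the right-hand side is even and positive), the Claim bound that each summand is at most $1$, and the fact that all four curves $\alpha, \tau(\alpha), \beta, \tau(\beta)$ have the same systole length, I would narrow down to the single remaining configuration $|p_1(\alpha)\cap p_1(\beta)| = 2$ with $|\alpha\cap\beta| = |\alpha\cap\tau(\beta)| = 1$, and rule this out by a surgery or straightening argument producing a piecewise-geodesic with a genuine corner of length at most $\ell$ whose geodesic representative is strictly shorter than the systole. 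Delivering a clean contradiction in this last subcase is the main obstacle of the proof.
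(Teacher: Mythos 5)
Parts (1), the arcs subcase of (2), and the subcase of (2) in which at least one of $\alpha,\beta$ is $\tau$-invariant are all handled correctly, and by essentially the same mechanism as the paper: pull $p_1(\alpha)$, $p_1(\beta)$ back to $\alpha\cup\tau(\alpha)$, $\beta\cup\tau(\beta)$, show that some pair of these equal-length simple closed geodesics must meet at least twice, and invoke Claim~\ref{claim:once_intersect}. (Your tangent-vector argument for $\tau(\alpha)\neq\alpha$ in (1) is correct but heavier than needed; the paper simply notes that a connected $\tau$-invariant preimage would force $\alpha$ itself to be non-simple.)

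The problem is precisely the one you flag and leave open. In the subcase of (2) where $p_1(\alpha)$ and $p_1(\beta)$ are closed geodesics, both lift to two components, and meet exactly twice on the sphere, your identity $|\alpha\cap\beta|+|\alpha\cap\tau(\beta)|=|p_1(\alpha)\cap p_1(\beta)|=2$ does admit the distribution $1+1$. That distribution is not forbidden by Claim~\ref{claim:once_intersect} alone, and you only gesture at, but do not carry out, the ``surgery or straightening argument'' meant to rule it out. So Part (2) of your proof is incomplete as written. For comparison, the paper disposes of this entire subcase by asserting --- with a reference to Figure~\ref{fig:double_cover} but no written justification --- that some component of $p_1^{-1}(p_1(\alpha))$ always meets some component of $p_1^{-1}(p_1(\beta))$ at least twice, i.e., that the $1+1$ pattern cannot occur. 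The worry you raise about that assertion is legitimate (the $1+1$ pattern is not excluded on purely topological grounds: it occurs exactly when the complementary regions of $p_1(\alpha)\cup p_1(\beta)$ each contain an odd number of branch points), but identifying the issue is not the same as resolving it. A complete proof of (2) must actually exclude the $1+1$ distribution in the systole setting, and that step is missing from your submission.
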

\begin{proof}

 As $p_1:X\to O_1$ is a double branched cover, 
 for a simple closed geodesic $\alpha \subseteq X$, if $p_1(\alpha)$ has a self-intersection at a regular point, 
then the preimage
  $p_1^{-1}(p_1(\alpha))$ has at least two self-intersection points. Moreover, $p_1^{-1}(p_1(\alpha))$ consists of either a closed geodesic (which is impossible as $\alpha$ is simple) or a pair of simple closed geodesics with equal length. By Claim \ref{claim:once_intersect}, $(1)$ is proved.

 For simple closed geodesics $\bar{\alpha}, \bar{\beta} \subseteq O_1$, if they intersect, then they intersect at least twice, because the underlying space of $O_1$ is a sphere and hence $\bar{\alpha}$, $\bar{\beta}$ are separating on $O_1$. The preimages $p_1^{-1}(\bar{\alpha})$, $p_1^{-1}(\bar{\beta})$ consist of one or two simple closed geodesic(s). No matter the preimages $p_1^{-1}(\bar{\alpha})$, $p_1^{-1}(\bar{\beta})$ has one or two component(s), any component of $p_1^{-1}(\bar{\alpha})$ intersects a component of $p_1^{-1}(\bar{\beta})$ at least twice, see Figure \ref{fig:double_cover}. By Claim \ref{claim:once_intersect}, $(2)$ holds when $p_1(\alpha)$ and $p_1(\beta)$ are both closed geodesics. 

    \begin{figure}[htbp]
 \centering
 \begin{subfigure}[htbp]{\textwidth}
 \begin{center}
  \includegraphics{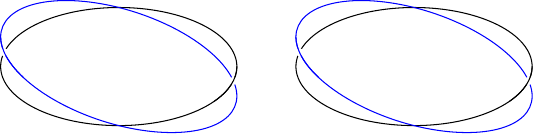}
 \end{center}
 \caption{Case 1}
 \label{fig:double_cover_2}
 \end{subfigure}

 \begin{subfigure}[htbp]{.45\textwidth}
 \begin{center}
  \includegraphics{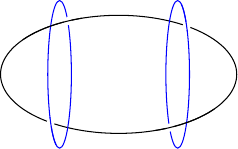}
 \end{center}
 \caption{Case 2}
 \label{fig:double_cover_3}
 \end{subfigure}
 \begin{subfigure}[htbp]{.45\textwidth}
 \begin{center}
  \includegraphics{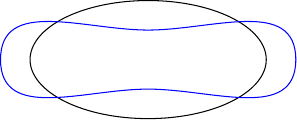}
 \end{center}
 \caption{Case 3}
 \label{fig:double_cover_4}
 \end{subfigure}
 
 \caption{Double covers of a pair of twice-intersecting simple curves}
 \label{fig:double_cover}
\end{figure}

For simple geodesic arcs $\bar{\alpha}, \bar{\beta} \subseteq O_1$ joining a pair of singular points, if they intersect at a regular point, then their preimages $p_1^{-1}(\bar{\alpha})$, $p_1^{-1}(\bar{\beta})$ are a pair of simple closed geodesics intersecting twice, see Figure \ref{fig:double_cover_arc}. By Claim \ref{claim:once_intersect}, $(2)$ holds when $p_1(\alpha)$ and $p_1(\beta)$ are both arcs. The proof is complete. 

\begin{figure}[htbp]
 \centering
 \begin{subfigure}[htbp]{.45\textwidth}
 \begin{center}
 \includegraphics{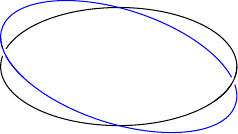}
 \end{center}
 \caption{The double cover}
 \label{fig:double_cover_1}
 \end{subfigure}
 \begin{subfigure}[htbp]{.45\textwidth}
 \begin{center}
  \includegraphics{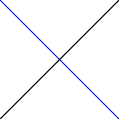}
 \end{center}
 \caption{The pair of arcs}
 \label{fig:double_cover_2s}
 \end{subfigure}
 \caption{Double branched cover of a pair of arcs}
 \label{fig:double_cover_arc}
\end{figure}

\end{proof}

Consider the branched cover $p_3:X\to O_3$. One may show that
\begin{lemma}
 For $\alpha\in \mathrm{S}(X)$, 
 \begin{enumerate}
  \item $p_3(\alpha)$ is simple. 
  \item $p_3(\alpha)$ is an arc joining two singular points. The two endpoints may be the same. 
  \item No endpoint of $p_3(\alpha)$ is $B$. 
  \item Endpoints of $p_3(\alpha)$ cannot be both $D$ or both $E$. 
 \end{enumerate}
 \label{lem:sys_cand_p3}
\end{lemma}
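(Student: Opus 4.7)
The plan is to prove the four claims as a package, using the symmetries of $X$ captured by $G = \langle \rho, \tau, \sigma\rangle$ together with Lemma \ref{lem:intersect} for the hyperelliptic cover $p_1: X \to O_1$ and the fundamental fact (Claim \ref{claim:once_intersect}) that two systoles intersect at most once. The unifying idea is that the shape of $p_3(\alpha)$ is controlled by how $\alpha$ is folded at the fixed points of the involutions in $G$, while any other sort of singular behavior of $p_3(\alpha)$ can be lifted to an intersection pattern in $X$ or in $O_1$ that the systole hypothesis forbids.

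For claim (1), I would argue by contradiction. A self-intersection of $p_3(\alpha)$ at a regular point $p \in O_3$ lifts, via the intermediate covering $O_1 \to O_3$, to either a self-intersection of $p_1(\alpha)$ at a regular point of $O_1$ (forbidden by Lemma \ref{lem:intersect}(1)) or to two points of $p_1(\alpha)$ in the same $\langle \rho, \sigma \rangle$-orbit, in which case some non-trivial translate $g\alpha$ ($g \in G$) meets $\alpha$ transversely at a preimage of $p$ in $X$. Combined with the intersection pattern forced by applying $g$ and its powers, this yields either a self-intersection of the simple curve $\alpha$ or two distinct systoles crossing more than once, contradicting Claim \ref{claim:once_intersect}. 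Self-intersections at the index-$2$ points $C$, $D$, $E$ are handled similarly by the local $\mathbb{Z}/2$ picture, while the case at $B$ does not arise by the argument for (3) below.

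For claim (2), the contrapositive says $p_3(\alpha)$ is a closed curve, which forces $\alpha$ to not be folded by any involution of $G$ at its fixed points; thus the full preimage $p_3^{-1}(p_3(\alpha))$ is a disjoint union of translates $\{g\alpha : g \in G/H\}$ for $H$ the setwise stabilizer of $\alpha$. Since the underlying space of $O_3$ is a sphere, any such closed geodesic $p_3(\alpha)$ is separating, and comparing with the three candidate multi-geodesics $\alpha, \beta, \gamma$ of Section \ref{sub:fn} (whose lengths are tabulated in Section 2) rules out such a configuration on $\pi^{*}(F_0)$. For claim (3), the stabilizer at a preimage of $B$ in $X$ is the purely rotational cyclic group $\langle \rho \rangle$ of order $g+1 \ge 3$, which contains no element acting as a reflection on the tangent space; a geodesic can only be folded at a fixed point of an involution in $G$, so $B$ cannot be an endpoint of the arc $p_3(\alpha)$. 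For claim (4), if both endpoints were $D$ (respectively $E$), then $\alpha$ would pass through two fixed points of a single involution in $G$ and thus belong to a specific family of $G$-invariant closed geodesics; using the symmetry of $X$ and the explicit Fenchel--Nielsen length formulae of Section 2, I would show such a curve is strictly longer than at least one of $\alpha, \beta, \gamma$ on every surface in $\pi^{*}(F_0)$, contradicting the systole hypothesis.

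The main obstacle will be the local analysis near the singular points of $O_3$, in particular distinguishing the three involutions $\tau$, $\sigma$, $\tau\sigma$, identifying which preimages in $X$ each one fixes, and translating ``$p_3(\alpha)$ is not simple'' or ``$p_3(\alpha)$ has the wrong endpoint type'' into a concrete excess intersection between $\alpha$ and a specific translate $g\alpha$ that violates Claim \ref{claim:once_intersect} or Lemma \ref{lem:intersect}. A secondary technical point is the length comparison in claim (4), which relies on massaging the explicit hyperbolic formulae from Section 2 across the full fundamental domain $\pi^{*}(F_0)$.
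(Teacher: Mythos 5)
Your treatment of claim (1) is close in spirit to the paper's: lift the self-intersection of $p_3(\alpha)$ to $O_1$ and invoke Lemma \ref{lem:intersect}. However, the other three claims have genuine gaps.

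For claim (3), your argument that $\langle\rho\rangle$ ``contains no element acting as a reflection'' and ``a geodesic can only be folded at a fixed point of an involution'' conflates two distinct notions. A geodesic through a fixed point $p$ is folded by \emph{any} element whose differential at $p$ reverses the tangent direction; a rotation by $\pi$ (i.e., $-I$ on $T_pX$) does this without being a reflection. When $g+1$ is even (e.g.\ $g=3$), $\rho^{(g+1)/2}$ is exactly such a rotation fixing the preimage of $B$, so your argument does not exclude $B$ as an endpoint in that case. The paper instead observes that the preimage of $B$ in $O_1$ is a \emph{regular} point $B_1$, so the preimage of $p_3(\alpha)$ in $O_1$ forces an intersection at a regular point (there are $g+1\ge 3$ branches at $B_1$), and Lemma \ref{lem:intersect} applies; this handles both parities of $g+1$.

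For claims (2) and (4), you appeal to a length comparison with $\alpha,\beta,\gamma$ ``on $\pi^*(F_0)$'', but the lemma makes no assumption that $X$ lies in $\pi^*(F_0)$ — it is a statement about all $X \in \pi^*(\mathcal{T}_{0,4})$, and indeed it is used later (e.g.\ Lemma \ref{lem:family_unique}) without that restriction, so the restriction cannot be assumed here. Moreover the comparison itself is only asserted, not carried out. The paper's proofs avoid any length estimate: for (2) it observes that a simple closed geodesic on $O_3$ would have to bound a disk containing two index-$2$ cone points, which contradicts the orbifold Gauss--Bonnet theorem; for (4) it lifts $p_3(\alpha)$ through $O_2$ (where the preimage of $E$ is regular, so the lift is non-simple) and then to $O_1$, reducing again to Lemma \ref{lem:intersect}. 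You would need to replace your length-comparison sketches with arguments of this covering-theoretic type, or at least show the claims hold for all $X \in \pi^*(\mathcal{T}_{0,4})$.
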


\begin{proof}
 This lemma relies on Lemma \ref{lem:intersect}. 

 $(1)$ Suppose for contradiction that $p_3(\alpha)$ is not simple. Then its preimage on $O_1$ is either a geodesic with self-intersections, or some geodesics with the same type and equal length intersecting at regular points. By Lemma \ref{lem:intersect}, $\alpha \notin \mathrm{S}(X)$. 

 $(2)$ To prove $(2)$ is to prove $p_3(\alpha)$ is not a simple closed geodesic. Recall that $O_3$ has a spherical underlying space and $4$ singular points. Then every simple closed geodesic on $O_3$ separates $O_3$, and on each side of this simple closed geodesic, there are two singular points. As there are three index-$2$ singular points on $O_3$, on one of the two sides, both singular points are index-$2$. But on any hyperbolic orbifold, no simple closed geodesic bounds a disk with two index-$2$ singular points by the Gauss-Bonnet theorem for orbifolds. Therefore $(2)$ holds. 

 $(3)$ Suppose for contradiction that an endpoint of $p_3(\alpha)$ is $B$. Notice that the preimage of $B$ on $O_1$ is a pair of regular points (illustrated as $B_1$, $B_2$ in Figure \ref{fig:cyclic_cover_n_4}). On the other hand the preimage of $p_3(\alpha)$ in $O_1$ is either a geodesic with self-intersections, or some geodesics with the same type and equal length intersecting at $B_1$ or $B_2$. By Lemma \ref{lem:intersect}, $\alpha \notin \mathrm{S}(X)$. 

 $(4)$ WLOG, suppose for contradiction that both endpoint of $p_3(\alpha)$ is $E$. Recall the preimage of $E$ in $O_2$ is a regular point. Then the preimage of $p_3(\alpha)$ is a non-simple geodesic, see Figure \ref{fig:self_int}, and its preimage in $O_1$ is either a geodesic with self-intersections or some geodesics with the same type and equal length intersecting at some regular points. By Lemma \ref{lem:intersect}, $\alpha \notin \mathrm{S}(X)$. 
 The proof is complete. 

 \begin{figure}[htbp]
  \centering
  \includegraphics{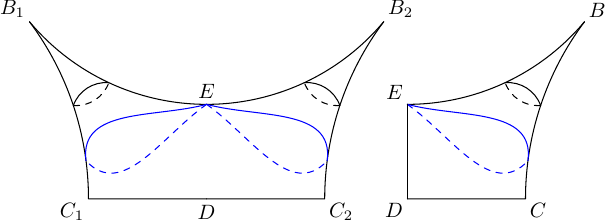}
  \caption{An arc with both endpoints at $E$ and its preimage in $O_2$}
  \label{fig:self_int}
 \end{figure}

\end{proof}

One may define some families of geodesics in $X$. Let $\mathcal{C}(X)$ be the set of simple closed geodesics on $X$. We define that
\begin{eqnarray*}
 L_{CD} &\overset{\mathrm{def}}{=} & \{ \alpha \subseteq \mathcal{C}(X) | \,p_3(\alpha) \text{ is a simple arc joining } CD \}; \\
 L_{CE} &\overset{\mathrm{def}}{=} & \{ \alpha \subseteq \mathcal{C}(X) | \,p_3(\alpha) \text{ is a simple arc joining } CE \}; \\
 L_{DE} &\overset{\mathrm{def}}{=} & \{ \alpha \subseteq \mathcal{C}(X) | \,p_3(\alpha) \text{ is a simple arc joining } DE \}; \\
 L_{C} &\overset{\mathrm{def}}{=} & \{ \alpha \subseteq \mathcal{C}(X) | \,p_3(\alpha) \text{ is a simple arc whose both endpoints are } C \}. 
\end{eqnarray*}

For the families, we show that
\begin{lemma}
 For $X\in \pi^{*} (\mathcal{T}_{0,4})$, $\mathrm{S}(X)\cap L_{CD}$, $\mathrm{S}(X)\cap L_{CE}$, $\mathrm{S}(X)\cap L_{C}$, $\mathrm{S}(X)\cap L_{DE}$ contains at most one geodesic up to the action of $\left<\rho, \sigma, \tau \right>$. 
 \label{lem:family_unique}
\end{lemma}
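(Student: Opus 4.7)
The plan is to lift to the hyperelliptic quotient $p_1 \colon X \to O_1$ and apply Lemma~\ref{lem:intersect}. Fix a family $L$ among $L_{CD}, L_{CE}, L_C, L_{DE}$ and suppose two systoles $\alpha, \alpha' \in \mathrm{S}(X) \cap L$ lie in distinct orbits of $G := \langle \rho, \sigma, \tau \rangle$; equivalently, $\bar\alpha := p_3(\alpha)$ and $\bar\alpha' := p_3(\alpha')$ are distinct simple geodesic arcs of the same endpoint-type in $O_3 = X/G$.

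First I identify the types of $p_1(\alpha), p_1(\alpha')$ in $O_1$. The singular points of $O_1$ are exactly the $2g+2$ Weierstrass points, which are the preimages of $C \in O_3$; whereas $D$ and $E$ have regular preimages in $O_1$ (they are branch points of the cover $O_1 \to O_3$ of index $2$), and $B$ has two regular preimages with branching index $g+1$. Consequently, for $L_{CD}, L_{CE}, L_C$ the image $p_1(\alpha)$ is a simple geodesic arc in $O_1$ joining two singular points, while for $L_{DE}$ it is a simple closed geodesic; in both cases $p_1(\alpha)$ and $p_1(\alpha')$ are of the same type as required by Lemma~\ref{lem:intersect}. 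Next I produce a regular crossing in $O_1$ between some $V \in p_1(G\alpha)$ and some $V' \in p_1(G\alpha')$: either $\bar\alpha$ and $\bar\alpha'$ cross at a regular interior point of $O_3$, whose preimages in $O_1$ are regular and yield the desired crossings, or they share only an endpoint in $\{D, E\}$ and the distinct tangent directions of $\bar\alpha, \bar\alpha'$ at that endpoint force their lifts in $O_1$ to cross transversely at the corresponding regular branch point. Given such a crossing, if $V = V'$ then $p_1^{-1}(V) \subseteq G\alpha \cap G\alpha'$ forces $G\alpha = G\alpha'$, contradicting the assumption; if $V \ne V'$, Lemma~\ref{lem:intersect}(2) implies that one of $p_1^{-1}(V), p_1^{-1}(V')$ is not a systole, contradicting the fact that $G$-translates of systoles are systoles.

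The main obstacle is the family $L_C$, whose arcs $\bar\beta$ have both endpoints at $C$ with singular preimages in $O_1$, so the construction of a regular crossing via shared endpoints breaks down. For two loops $\bar\beta, \bar\beta'$ that are disjoint in $O_3$ outside $C$, I would either derive a contradiction from a Gauss--Bonnet computation on the disks bounded by $\bar\beta \cup \bar\beta'$ together with the forced distribution of the cone points $D, E, B$ (ruling out the disjoint configuration at the systole length), or lift the arcs further through the tower $X \to O_1 \to O_2 \to O_3$ and adapt the regular-crossing argument on the intermediate quotient $O_2 = X/\langle \rho, \tau \rangle$ where the analogues of $C$'s preimages acquire a more favorable structure.
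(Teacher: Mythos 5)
Your treatment of $L_{CD}$, $L_{CE}$, and $L_{DE}$ is essentially the paper's argument: the shared endpoint(s) $D$ and/or $E$ have regular preimages in $O_1$, and two distinct geodesic arcs arriving at such a point with different tangent directions lift to geodesics crossing transversely at a regular point of $O_1$, so Lemma~\ref{lem:intersect}(2) rules out both being systoles. The extra case-split you add (whether the two lifts $V, V'$ coincide as components) is harmless and fills a small elision in the paper's wording.

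However, you have a genuine gap in the $L_C$ case, and you acknowledge it yourself without actually closing it. Both of your proposed fallbacks are problematic. Lifting to $O_2$ does not give a ``more favorable structure'' at $C$: the preimage of $C$ in $O_2$ is the pair of index-$2$ cone points $C_1, C_2$, and in $O_1$ it is the full set of $2g+2$ index-$2$ cone points, so $C$ never acquires a regular preimage below $X$ itself, and the tangent-direction trick at a regular branch point is unavailable. The Gauss--Bonnet idea is also not on its own a contradiction: two disjoint simple geodesic loops based at a cone point can perfectly well coexist on a general hyperbolic orbifold; what fails here is a \emph{topological} constraint specific to $O_3$. The paper's actual argument is combinatorial: Lemma~\ref{lem:intersect} forces $p_3(\alpha)$ and $p_3(\alpha')$ to be disjoint away from $C$, so $p_3(\alpha)$ separates $O_3$ into two coned disks $D_1 \ni B$ and $D_2 \ni D, E$, and $p_3(\alpha')$ lies in one of them. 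In $D_1$ there is only one interior cone point, so every essential simple loop at $C$ is boundary-parallel to $p_3(\alpha)$; in $D_2$ a non-boundary-parallel simple loop at $C$ would have to separate $D$ from $E$, but the geodesic representative of such a class lies in $L_{CD}$ or $L_{CE}$, not $L_C$. Either way $p_3(\alpha') = p_3(\alpha)$, which is the contradiction. You need some version of this orbifold-topology argument (or an equivalent analysis of simple loops in the two coned disks) to complete the proof; the two alternatives you sketch do not substitute for it.
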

\begin{proof}
 For geodesics $\alpha$ and $\alpha'$, $p_3(\alpha)\ne p_3(\alpha')$ iff they have different  orbits  under the action of $\left<\rho, \sigma, \tau \right>$. 
 For $\alpha, \alpha'\in L_{CD}$ with $p_3(\alpha)\ne p_3(\alpha')$,  by definition, $p_3(\alpha)$ and $p_3(\alpha')$ pass through $D\in O_3$. The preimage of $D$ is a regular point in  $O_2$ and $O_1$. Hence the preimage of $p_3(\alpha)$ and $p_3(\alpha')$ in $O_1$ intersect at some regular points of $O_1$. By Lemma \ref{lem:intersect}, either $\alpha$ or  $\alpha'$ is not a systole of $X$. 

 The same proof holds for geodesics in $L_{CE}$. 

For $\alpha, \alpha'\in L_{DE}$ with $p_3(\alpha)\ne p_3(\alpha')$, by definition, $p_3(\alpha)$ and $p_3(\alpha')$ pass through $D, E\in O_3$. Their preimages on $O_2$ and $O_1$ are simple closed geodesics intersecting at regular points that are preimages of $D$ and $E$. By Lemma \ref{lem:intersect}, either $\alpha$ or  $\alpha'$ is not a systole of $X$. 

By Lemma \ref{lem:intersect}, if $\alpha, \alpha'\in L_{C} \cap \mathrm{S}(X)$, then $p_3(\alpha)$ and $p_3(\alpha')$ are disjoint except at $C$ by Lemma \ref{lem:intersect}. The geodesic $p_3(\alpha)$ separates $O_3$ into two coned disks (see Figure \ref{fig:222nssc_1}). In their interior, one contains the two singular points $D$, $E$ (denoted as $D_2$), the other contains one singular point $B$ (denoted as $D_1$). The other geodesic $p_3(\alpha')$ is contained in either disk. In $D_1$, any incontractible simple curve is parallel to $\partial D_1$, namely $p_3(\alpha)$. In $D_2$, an incontractible, non-$\partial $ parallel, simple closed curve based on $C$ separates $D$ and $E$. But a geodesic in this homotopy class is a geodesic in $L_{CD}$ or $L_{CE}$. Thus in both cases, $p_3(\alpha)=  p_3(\alpha')$. Therefore, the systole in $L_C$ is unique. The proof is complete. 

\begin{figure}[htbp]
 \centering
 \includegraphics{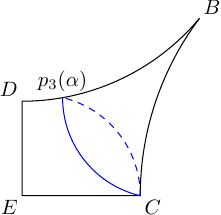}
 \caption{$p_3(\alpha)$ in $O_3$}
 \label{fig:222nssc_1}
\end{figure}

\end{proof}

To conclude, we show that
\begin{lemma}
 If $\alpha \in \mathrm{S}(X)$, then $\alpha \in L_{CD} \cup L_{CE} \cup L_{DE}\cup L_{C}$. Moreover, in each of the four families $L_{CD}$, $ L_{CE}$, $L_{DE}$ and $L_{C}$, $\mathrm{S}(X)$ contains at most one geodesic up to the action of $\left<\rho, \sigma, \tau \right>$. 
 \label{lem:family_sys_cand}
\end{lemma}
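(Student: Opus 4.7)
The plan is to assemble this lemma as a direct consequence of Lemma \ref{lem:sys_cand_p3} and Lemma \ref{lem:family_unique}, so no new geometric input is needed.

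For the first assertion, I would start with any $\alpha \in \mathrm{S}(X)$ and apply parts (1) and (2) of Lemma \ref{lem:sys_cand_p3}: the image $p_3(\alpha)$ is a simple arc in $O_3$ whose endpoints are singular points, possibly the same one. Since the singular points of $O_3$ are exactly $B$, $C$, $D$, $E$, the unordered pair of endpoints is one of the ten pairs drawn from this four-element set. Part (3) of Lemma \ref{lem:sys_cand_p3} rules out every pair containing $B$, eliminating four of these ten; part (4) then eliminates $\{D,D\}$ and $\{E,E\}$. What remains is exactly the four pairs $\{C,C\}$, $\{C,D\}$, $\{C,E\}$, $\{D,E\}$, corresponding by definition to the four families $L_C$, $L_{CD}$, $L_{CE}$, $L_{DE}$. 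Hence $\alpha$ lies in one of them.

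For the second assertion, I would invoke Lemma \ref{lem:family_unique} verbatim, since that lemma already states that each of $\mathrm{S}(X)\cap L_{CD}$, $\mathrm{S}(X)\cap L_{CE}$, $\mathrm{S}(X)\cap L_{DE}$, $\mathrm{S}(X)\cap L_{C}$ contains at most one geodesic up to the action of $\langle \rho, \sigma, \tau\rangle$.

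Because the present lemma is essentially a packaging statement, there is no real obstacle; the only mildly delicate point is to be careful with the case analysis on endpoints so that the enumeration exhausts precisely the four named families, with no overlap and no omissions. I would therefore write the proof as two short paragraphs — one combinatorial reduction via Lemma \ref{lem:sys_cand_p3}, and one citation of Lemma \ref{lem:family_unique} — and conclude.
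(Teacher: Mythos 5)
Your proposal is correct and matches the paper's proof exactly: the paper also derives the lemma directly from Lemma \ref{lem:sys_cand_p3} and Lemma \ref{lem:family_unique}. Your version simply spells out the endpoint case analysis that the paper leaves implicit.
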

\begin{proof}
 This comes directly from Lemma \ref{lem:sys_cand_p3} and \ref{lem:family_unique}. 
\end{proof}

This gives a classification of systoles on $X$. 

\subsection{Restricting to the fundamental domain}
\label{sub:cand}

This subsection mainly shows that 
\begin{proposition}
 For $X\in \pi^{*}(F_{0})\cap \mathcal{P}_g$, $\mathrm{S}(X) \subseteq \{\alpha, \beta, \gamma, \delta\}$. 
 \label{prop:four_sys_cand}
\end{proposition}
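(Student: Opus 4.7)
By Lemma \ref{lem:family_sys_cand}, any systole of $X\in\pi^{*}(\mathcal{T}_{0,4})$ lies in $L_{CD}\cup L_{CE}\cup L_{DE}\cup L_C$, with at most one $\langle\rho,\sigma,\tau\rangle$-orbit of systole per family. The length formulas (\ref{for:alpha}), (\ref{for:gamma}), (\ref{for:beta}) and (\ref{for:delta_even})--(\ref{for:delta_odd}) single out $\alpha\in L_{CD}$, $\gamma\in L_{CE}$, $\beta\in L_C$ and $\delta\in L_{DE}$ as natural representatives. The plan is to show that whenever $X\in\pi^{*}(F_0)$, each of $\alpha,\gamma,\beta,\delta$ is the unique shortest $\langle\rho,\sigma,\tau\rangle$-orbit in its family; combined with Lemma \ref{lem:family_unique}, this forces $\mathrm{S}(X)\subseteq\{\alpha,\beta,\gamma,\delta\}$ and proves the proposition.

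The heart of the argument is a family-by-family length comparison. For $L_{CD}$, the competing orbits are $D_{\gamma}^{n}(\alpha)$ for $n\neq 0$, whose lengths on $X$ satisfy $\ell_{D_{\gamma}^{n}(\alpha)}(c,t)=\ell_{\alpha}(c,t-2nc)$ via the $\pmodo_{0,4}$-action on Fenchel--Nielsen coordinates. The function $t\mapsto\ell_{\alpha}(c,t)$ is convex in $t$ by Wolpert's theorem, and (\ref{for:alpha}) shows it attains its minimum on $[-c,c]$ at $t=0$; convexity then promotes this to the global minimum. The $F_0$-constraint $0\le t\le c$ gives $|t-2nc|\ge t$ for every $n\neq 0$, so $\ell_{\alpha}(c,t)\le\ell_{D_{\gamma}^{n}(\alpha)}(c,t)$. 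A symmetric minimization in the coordinates $(c_{\alpha},t_{\alpha})$, using the $F$-constraint $|t_{\alpha}|\le c_{\alpha}$, shows $\gamma$ is shortest in $L_{CE}$. The cases $L_C$ (using (\ref{for:beta})) and $L_{DE}$ (using (\ref{for:delta_even})--(\ref{for:delta_odd})) follow identically: convexity of the length function in the twist parameter together with the $F_0$-constraints rules out every competing Dehn-twist translate.

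The chief obstacle I foresee is the combinatorial identification underlying the above. One must verify that every $\langle\rho,\sigma,\tau\rangle$-orbit of simple closed geodesic in $L_{CD}$, and analogously in the other three families, really arises as a $\pmodo_{0,4}$-translate of the natural representative. This amounts to classifying isotopy classes of simple arcs on the $4$-punctured sphere $O_3$ joining a prescribed pair of singular points; in a $4$-punctured sphere these form a $\mathbb{Z}$-family generated by iterated Dehn twists along the separating curve that has one of the two endpoints on each side. Lifting this $\mathbb{Z}$-family to $X$ through $p_3$ and matching it with the action of $D_{\alpha}$ and $D_{\gamma}$ on the Fenchel--Nielsen twist parameter is the delicate bookkeeping; once it is in place, the convexity-based length comparison above completes the proof.
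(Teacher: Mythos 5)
The core step you propose---reduce $L_{CD}$ to the orbit $\{D_{\gamma}^{n}(\alpha)\}$ and minimize over $n$ using convexity of the length function in the twist parameter---fails, because $L_{CD}$ is not a $D_{\gamma}$-orbit. Already $D_{\gamma}^{-1}(\alpha)=\beta$, and the paper's Claim \ref{claim:curve_in_family} places $\beta$ in $L_C$, not $L_{CD}$; by the $r_0$-symmetry $D_{\gamma}(\alpha)$ also lands in $L_C$. The families $L_{CD}$, $L_{CE}$, $L_C$, $L_{DE}$ are defined by the combinatorial type of the projected arc on $O_3$ and each contains infinitely many geodesics that are not in a single $\pmodo_{0,4}$-orbit of the representative (indeed $L_{CE}$ is infinite while the $D_{\gamma}$-orbit of $\gamma$ is a single point, so the proposed parametrization cannot even be consistent). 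You flag the ``combinatorial identification'' as the chief obstacle, but the issue is not that it is unverified---it is false, so the whole length-comparison machinery never gets off the ground.

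The paper's argument is of a different nature and uses an input you do not invoke. It first establishes, via Lemma \ref{lem:section}, that $\gamma\in\mathrm{S}(X)$ for every $X\in\mathcal{P}_g\cap\pi^{*}(F_0)$. Uniqueness of the candidate in $L_{CE}$ then follows immediately from Lemma \ref{lem:family_unique}. For the remaining three families it works inside a four-right-angled pentagonal fundamental domain $P$ of $O_3$: because $\gamma$ is a systole, Lemma \ref{lem:intersect} forces the image of any other systole candidate $\eta$ to meet the edge $A_1A$ (the image of $\gamma$) only at $C$; reflecting segments across the axis of symmetry of $P$ produces a piecewise geodesic from $C$ to $D$ (or $D_1$, etc.) of the same length, which must then exceed the straight segment $CD$. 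The constraint ``$\eta$ is disjoint from $\gamma$ away from singular points'' is the engine of the comparison; it has no counterpart in a convexity-in-twist argument, and a correct proof along your lines would still need something to play that role.
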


To see this, first we describe the preimages of $C, D, E \in O_3$ on $X$. The preimage $p_3^{-1}(C)$ consists of Weierstrass points of $X$. As illustrated in Figure \ref{fig:exp_1_2_n}, $C_i$ and $C_i'$ are Weierstrass points of $X$ where $i = 1,2,...,g+1$, and these points are on the cuffs $\gamma = \{\gamma_1, \gamma_2, ..., \gamma_{g+1} \}$. The preimages $p_3^{-1}(D)$ or $p_3^{-1}(E)$ is either the mid-points of $C_iC'_i$ on the cuffs, or the mid-points of $C'_iC_{i+1}$ for $i = 1,2,...,g+1$. Also illustrated in Figure \ref{fig:exp_1_2_n}, the geodesic $\alpha_i$ consists of geodesics joining $C'_iC_{i+1}$ for $i = 1,2,...,g+1$. WLOG, assume the preimage of $D$ consists of the mid-points of $C_iC'_i$ and the preimage of $E$ consists of the mid-points of $C'_iC_{i+1}$. Hence $\gamma\in L_{CD}$ and $\alpha\in L_{CE}$. 

\begin{figure}[htbp]
 \centering
 \includegraphics{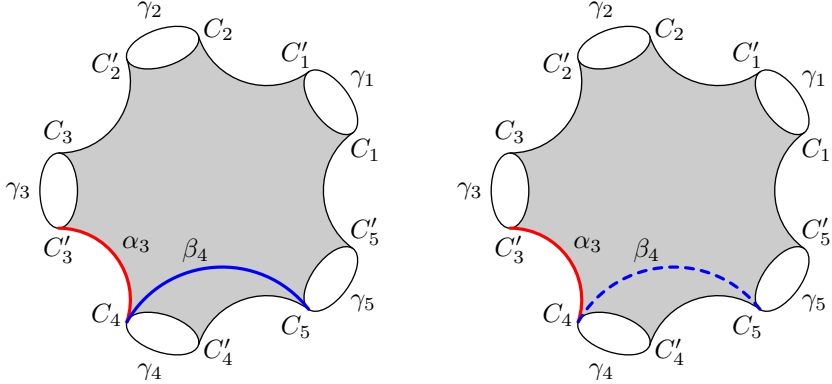}
 \caption{The surface $X$}
 \label{fig:exp_1_2_n}
\end{figure}

Recall that $\beta_i = D_{\gamma_i}^{-1}(\alpha_i)$, then as illustrated in Figure \ref{fig:exp_1_2_n}, $\beta_i$ consists of the geodesics joining $C_i, C_{i+1}$, intersects the $\alpha$-geodesics and $\gamma$-geodesics at $C_i$ and $C_{i+1}$. Hence $\beta\in L_C$. 
Recall $\delta \subseteq X$ is the preimage of the simple closed geodesic disjoint with $\pi(\beta)$, where $\pi: X \to O = X/\left<\rho \right>$. Since $\delta$ is disjoint with $\beta$, $\delta$ does not pass through $C$. Since $\pi(\delta)$ is simple, $\delta$ passes through $D$ and $E$. Hence $\delta\in L_{DE}$. To conclude
\begin{claim}
 The geodesics $\alpha\in L_{CE}$, $\beta \in L_{C}$, $\gamma\in L_{CD}$, $\delta \in L_{DE}$. 
 \label{claim:curve_in_family}
\end{claim}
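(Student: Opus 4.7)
The plan is to read each membership off the explicit identification of $p_3^{-1}(\{C,D,E\})$ inside $X$ that is already set up in the paragraphs above, then invoke Lemma \ref{lem:sys_cand_p3} only for the one case ($\delta$) where the endpoints of the $p_3$-image are not visible by direct inspection.

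First, I fix the picture. Under $p_3:X\to O_3$, the Weierstrass points $\{C_i,C_i'\}_{i=1,\dots,g+1}$ on the cuffs are precisely $p_3^{-1}(C)$, and (up to swapping names) the midpoints of the subarcs $C_iC_i'$ along $\gamma_i$ form $p_3^{-1}(D)$, while the midpoints of $C_i'C_{i+1}$ form $p_3^{-1}(E)$. Given this, I verify the three easy cases by inspection of Figure \ref{fig:exp_1_2_n}: each cuff $\gamma_i$ meets both a $p_3^{-1}(C)$-point (namely $C_i$ or $C_i'$) and a $p_3^{-1}(D)$-point (the midpoint of $C_iC_i'$) and nothing else over $\{C,D,E\}$, so $p_3(\gamma_i)$ is a simple arc from $C$ to $D$ and $\gamma\in L_{CD}$; each $\alpha_i$ is the seam geodesic joining $C_i'$ to $C_{i+1}$ through the corresponding $E$-midpoint, giving a simple $C$-to-$E$ arc and hence $\alpha\in L_{CE}$; and each $\beta_i=D_{\gamma_i}^{-1}(\alpha_i)$ runs from $C_i$ to $C_{i+1}$, both lying over $C$, so $p_3(\beta_i)$ is a simple arc with both endpoints at $C$ and $\beta\in L_C$. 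Simplicity of the three projected arcs is immediate from the fact that each orbit $\alpha$, $\beta$, $\gamma$ is preserved by $\langle \rho,\sigma,\tau\rangle$ and is a disjoint union of $g+1$ simple geodesics in $X$.

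The only membership that requires a short argument is $\delta\in L_{DE}$. By construction $\delta$ is disjoint from $\beta$, and every point of $p_3^{-1}(C)$ is a Weierstrass point lying on some $\beta_i$, so $\delta$ misses $p_3^{-1}(C)$; hence $C$ is not an endpoint of $p_3(\delta)$. Since $\pi(\delta)\subseteq O$ is simple by definition of $\delta$, the projection $p_3(\delta)$ is also simple. Applying Lemma \ref{lem:sys_cand_p3} (2)--(4)—which rules out $B$ as an endpoint and rules out both endpoints being equal to $D$ or both to $E$—forces $p_3(\delta)$ to be a simple arc with one endpoint at $D$ and the other at $E$, i.e.\ $\delta\in L_{DE}$.

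The proof is thus essentially a bookkeeping argument; the only potential obstacle is making sure the WLOG assignment of $D$ and $E$ to the two families of cuff-midpoints is consistent with the conventions in the construction of $\delta$, but this is precisely what the disjointness of $\pi(\delta)$ from $\beta$ enforces and can be recorded in a single line.
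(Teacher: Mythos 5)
Your identifications of $\alpha\in L_{CE}$, $\gamma\in L_{CD}$ and $\beta\in L_C$ are the paper's argument verbatim: read off the Weierstrass points (preimage of $C$), the two families of cuff-midpoints (preimages of $D$ and $E$), and inspect Figure \ref{fig:exp_1_2_n}. That part is correct and takes the same approach.

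The argument for $\delta\in L_{DE}$ has a gap. You invoke Lemma \ref{lem:sys_cand_p3}(2)--(4), but that lemma is stated only for curves $\alpha\in\mathrm{S}(X)$, and $\delta$ is not known to be a systole of the surface under consideration --- indeed Lemma \ref{lem:delta_not} will later show it never is, on $\pi^{*}(F_0)\cap\mathcal{P}_g$. The proofs of parts (3) and (4) go through Lemma \ref{lem:intersect} and derive the conclusion ``then $\alpha$ is not a systole,'' so for a non-systole $\delta$ they say nothing at all about the endpoints of $p_3(\delta)$; only part (2) has a proof (Gauss--Bonnet) that uses simplicity alone and is applicable here. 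A secondary soft spot: ``$\pi(\delta)$ simple $\Rightarrow$ $p_3(\delta)$ simple'' is not automatic, since $O\to O_3$ is a nontrivial branched quotient. The paper sidesteps both issues: $\delta$ is a single, explicitly described geodesic (Section 2.3, Figure \ref{fig:rotation_delta}), and the paper deduces $\delta\in L_{DE}$ by noting $\delta$ is disjoint from $\beta$ (hence misses $C$) and then reading the passage through $D$ and $E$ directly off the construction, rather than from any systole lemma. To repair your proof, replace the appeal to Lemma \ref{lem:sys_cand_p3}(3)--(4) with the explicit description of $\delta$: it stays away from the fixed points of $\rho$ (so avoids $B$) and, being $\langle\rho,\sigma,\tau\rangle$-invariant and simple, crosses both families of cuff-midpoints, so $p_3(\delta)$ joins $D$ to $E$.
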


The following lemma implies for $X \in \mathcal{P}_g \cap \pi^{*}(\mathcal{T}_{0,4})$, $\mathrm{S}(X)$ contains at least one geodesic in  $L_{CD}$ or $L_{CE}$, which is an essential observation to prove Proposition \ref{prop:four_sys_cand}. Moreover, it is the reason why we can exclude $\delta$ in  Lemma \ref{lem:delta_not}. 

\begin{lemma}
 If $\mathrm{S}(X) \subseteq L_C\cup L_{DE}$, then $X \notin \mathcal{P}_g$. 
 \label{lem:bd_not_spine}
\end{lemma}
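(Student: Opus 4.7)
The plan is to show that under the hypothesis $\mathrm{S}(X)\subseteq L_C\cup L_{DE}$, the set $\mathrm{S}(X)$ is confined to the multi-curve $\beta\cup\delta$, and then to certify that $\mathrm{S}(X)$ fails to fill $X$ by exhibiting an essential simple closed curve in its complement---most directly, a parallel pushoff of a component of $\beta$ or $\delta$.

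First I would combine Lemma~\ref{lem:family_unique} with Claim~\ref{claim:curve_in_family}. The lemma says that any systole in $L_C$ (resp.\ $L_{DE}$) is $\langle\rho,\sigma,\tau\rangle$-equivalent to a component of $\beta$ (resp.\ $\delta$), and the claim identifies $\beta\in L_C$ and $\delta\in L_{DE}$. I then need to check that $\beta=\{\beta_1,\ldots,\beta_{g+1}\}$ and $\delta$ are themselves $\langle\rho,\sigma,\tau\rangle$-stable, which follows from the symmetry of the construction in Section~\ref{sec:pre}: $\rho$ cyclically permutes the $\beta_i$'s (resp.\ $\delta_j$'s), while $\sigma$ and $\tau$ preserve the cuff/seam system that defines them. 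A priori there can be extra lifts of $p_3(\beta)$ or $p_3(\delta)$ in $X$ coming from the branched cover $p_3$, but Claim~\ref{claim:once_intersect} rules them out: such an extra lift would meet each $\beta_i$ (or component of $\delta$) at both Weierstrass preimages of $C$ (resp.\ $D,E$) through which it passes, giving at least two intersection points, so it cannot be a systole if $\beta_i$ is. Hence $\mathrm{S}(X)\subseteq\beta\cup\delta$.

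Next, $\beta$ and $\delta$ are disjoint by construction (on $O=X/\langle\rho\rangle$ the curve $\pi(\delta)$ is disjoint from $\pi(\beta)$), and each is a multi-geodesic with pairwise disjoint components, so $\beta\cup\delta$ is a disjoint union of essential simple closed geodesics. Picking any component $c\subseteq\beta\cup\delta$, a thin annular neighborhood of $c$ (which exists since $c$ is two-sided) contains a parallel pushoff $c'$ that is isotopic to $c$---hence essential in $X$---and disjoint from the whole multi-curve $\beta\cup\delta$, and therefore from $\mathrm{S}(X)$. The existence of an essential simple closed curve disjoint from $\mathrm{S}(X)$ is equivalent to $\mathrm{S}(X)$ failing to fill, so $X\notin\mathcal{P}_g$.

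The main obstacle I expect is the ``extra lift'' issue in the first step: the branched cover $p_3$ produces roughly twice as many components in $p_3^{-1}(p_3(\beta))$ as there are $\beta_i$'s, and one must carefully verify that the $\langle\rho,\sigma,\tau\rangle$-orbit control coming from Lemma~\ref{lem:family_unique}, together with the at-most-one-intersection bound from Claim~\ref{claim:once_intersect}, forces the systoles in $L_C$ and $L_{DE}$ back into the canonical multi-curves $\beta$ and $\delta$ rather than allowing genuinely new companion systoles. Once that bookkeeping is done, the remaining topology is elementary.
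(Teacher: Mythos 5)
Your reduction to $\mathrm{S}(X)\subseteq\beta\cup\delta$ is not supported by the lemmas you cite, and this is the crux of the gap. Lemma~\ref{lem:family_unique} only says that $\mathrm{S}(X)\cap L_C$ (resp.\ $\mathrm{S}(X)\cap L_{DE}$) contains at most one geodesic up to the $\langle\rho,\sigma,\tau\rangle$-action; it does \emph{not} say which orbit this is. Claim~\ref{claim:curve_in_family} only records that $\beta\in L_C$ and $\delta\in L_{DE}$; it does not say these are the only possible systole orbits in those families. The statement of Lemma~\ref{lem:bd_not_spine} places no restriction on $X$ (in particular no restriction to the fundamental domain $\pi^{*}(F_0)$), so the systole $\eta\in\mathrm{S}(X)\cap L_C$ may well project to a long, twisted simple arc from $C$ to $C$ in $O_3$ rather than to $p_3(\beta)$, and the systole $\eta'\in\mathrm{S}(X)\cap L_{DE}$ may project to a different arc from $D$ to $E$ that \emph{intersects} $p_3(\eta)$ at regular points. (Note that Proposition~\ref{prop:four_sys_cand}, which does identify $\beta,\delta$ as the unique candidates inside $\pi^{*}(F_0)$, is proved \emph{using} this lemma, so it cannot be invoked here.)

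Once you lose the identification with $\beta\cup\delta$, the disjointness you rely on — and hence the parallel-pushoff argument — breaks down: there is no a priori reason that $\eta$ and $\eta'$ are disjoint. The paper handles exactly this situation by a case split on whether $p_3(\eta)$ and $p_3(\eta')$ intersect. If they are disjoint, the preimages in $X$ are pairwise disjoint simple closed geodesics and cannot fill (this part of your argument is fine, and the parallel-pushoff observation is a clean way to see it). If they intersect, the intersection occurs at regular points of $O_3$ since $\eta\in L_C$ and $\eta'\in L_{DE}$ meet different singular points; lifting through the branched cover then forces some component of the preimage of $p_3(\eta)$ to meet some component of the preimage of $p_3(\eta')$ at least twice, contradicting Claim~\ref{claim:once_intersect}, so $\eta$ and $\eta'$ cannot both be systoles. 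Your proof omits this second case, and without it the lemma is not established.
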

\begin{proof}
 If $\mathrm{S}(X) \subseteq L_C$ or $\mathrm{S}(X) \subseteq L_{DE}$, then by Lemma \ref{lem:family_unique}, $\mathrm{S}(X)$ contains exactly one geodesic up to the $\left<\rho, \sigma, \tau \right>$ action. Thus $X \notin \mathcal{P}_g$. 

 Then we consider the case $\mathrm{S}(X) = \left\{ \eta, \eta' \right\} $, where $\eta\in L_C $ and $\eta'\in L_{DE} $. 
 First we assume $p_3(\eta)\cap p_3(\eta') = \emptyset$, as illustrated in Figure \ref{fig:222nssc_3}.
 Then  the preimage of $p_3(\eta')$ in $X$ consists of disjoint simple geodesics, and the preimage of $p_3(\eta')$ is disjoint with the preimage of $p_3(\eta)$. 
 Therefore $\left\{ \eta, \eta' \right\} $ does not fill $X$ and $X\notin \mathcal{P}_g$. 
 \begin{figure}[htbp]
  \centering
  \includegraphics{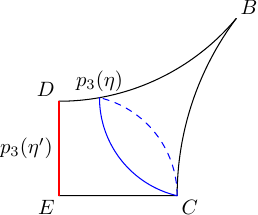}
  \caption{The disjoint $p_3(\eta)$ and $p_3(\eta')$}
  \label{fig:222nssc_3}
 \end{figure}

 For the case $p_3(\eta) \cap p_3(\eta')\ne \emptyset$, observe that both $p_3(\eta)$ and $p_3(\eta')$ are arcs joining singular points, while any component of the preimage of $p_3(\eta)$ or $p_3(\eta')$ in $X$ is a simple closed geodesic. Moreover, $p_3(\eta)$ intersects $p_3(\eta')$ at regular points of $O_3$. Therefore, for any component of the preimage of $p_3(\eta)$ in $X$, there is a component of the preimage of $p_3(\eta')$ in $X$ such that these two closed geodesics intersect at least twice, see Figure \ref{fig:double_cover_arc}. Thus, in this case, either $\eta$ or $\eta'$ is not a systole. The proof is complete. 
\end{proof}

Recall the covering $\Pi$ from the Teichm\"uller curve $\pi^{*}(\mathcal{T}_{0,4}) \subseteq \mathcal{T}_g$ to its image in $\mathcal{T}_{g}/\modo_{g}^{\pm}$. Recall by Lemma \ref{lem:modo}, $\Pi(\pi^{*}(\mathcal{T}_{0,4}))$ is isomorphic to $\mathcal{M}_{0,4}/ \left< \bar{r_0}, \bar{r_1} \right>$. 
Let $$\mathcal{P}_{0,4}^{\pm}\overset{\mathrm{def}}{=} \Pi(\mathcal{P}_g \cap \pi^{*}(\mathcal{T}_{0,4})).  $$

We are ready to show
\begin{lemma}
 There is a map $s: \mathcal{P}_{0,4}^{\pm}\to \mathcal{P}_g \cap \pi^{*}(F_0)$ such that $s \circ \Pi = \mathrm{id}_{\mathcal{P}_{0,4}^{\pm}}$. Moreover, for any $X\in \pi^{*}(F_0)$, $\gamma\in \mathrm{S}(X)$. 
 \label{lem:section}
\end{lemma}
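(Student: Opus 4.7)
The lemma has two parts. For the first, I would simply exploit that $\pi^*(F_0)$ is a (closed) fundamental domain for $\Pi$ by Proposition \ref{prop:fundamental_curve}, and that the Thurston spine $\mathcal{P}_g$ is invariant under $\modo_g^{\pm}$. Concretely, given $y\in \mathcal{P}_{0,4}^{\pm}$, pick any $X_0\in \mathcal{P}_g\cap \pi^{*}(\mathcal{T}_{0,4})$ with $\Pi(X_0)=y$; by the fundamental-domain property there exists $g\in \modo_g^{\pm}$ with $gX_0\in \pi^{*}(F_0)$; since $\modo_g^{\pm}$ preserves $\mathcal{P}_g$, we still have $gX_0\in \mathcal{P}_g\cap \pi^{*}(F_0)$. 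Define $s(y)$ to be any such representative. This is just a set-theoretic section of $\Pi|_{\mathcal{P}_g\cap \pi^{*}(F_0)}$, and the relation $\Pi\circ s=\mathrm{id}_{\mathcal{P}_{0,4}^{\pm}}$ is immediate from the construction; no continuity is required.

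\textbf{Second statement.} I read the ``moreover'' clause as applying to $X\in \mathcal{P}_g\cap \pi^{*}(F_0)$ (which is the image of $s$ and the only case relevant in the sequel). The argument is a two-line assembly of earlier results. By Proposition \ref{prop:four_sys_cand}, $\mathrm{S}(X)\subseteq \{\alpha,\beta,\gamma,\delta\}$. Recalling that $\alpha$ and $\gamma$ are (up to the $\langle \rho,\sigma,\tau\rangle$-action) the only geodesics of $X$ lying in $L_{CE}$ and $L_{CD}$ respectively (Claim \ref{claim:curve_in_family}), while $\beta\in L_C$ and $\delta\in L_{DE}$, Lemma \ref{lem:bd_not_spine} forces $\mathrm{S}(X)\cap \{\alpha,\gamma\}\neq \emptyset$; otherwise $\mathrm{S}(X)\subseteq L_C\cup L_{DE}$ and $X\notin \mathcal{P}_g$.

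\textbf{Closing the case $\alpha\in \mathrm{S}(X)$.} This is where the defining inequality of $F_0$ enters. By the definition (\ref{for:quad_fund}), $c(X)\le c_{\alpha}(X)$, so by (\ref{for:gamma}) and the definition of $c_{\alpha}$,
\[
\ell_{\gamma}(X)=2c(X)\le 2c_{\alpha}(X)=\ell_{\alpha}(X).
\]
Hence if $\alpha\in \mathrm{S}(X)$ then $\ell_{\gamma}(X)\le \ell_{\alpha}(X)=\mathrm{sys}(X)$, which forces $\ell_{\gamma}(X)=\mathrm{sys}(X)$ and $\gamma\in \mathrm{S}(X)$. If instead $\gamma\in \mathrm{S}(X)$ directly, there is nothing to prove. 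Either way, $\gamma\in \mathrm{S}(X)$.

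\textbf{Main obstacle.} There is no real obstacle once the earlier lemmas are in place; the proof is essentially organizational. The only mild subtlety is making sure one interprets the ``moreover'' correctly, namely as a statement about points in the image of $s$ (equivalently, points in $\mathcal{P}_g\cap \pi^{*}(F_0)$), since the inequality $\ell_{\gamma}\le \ell_{\alpha}$ alone does not force $\gamma$ to be a \emph{systole} without also knowing, via Lemma \ref{lem:bd_not_spine}, that one of $\alpha,\gamma$ is short enough to realize the systole.
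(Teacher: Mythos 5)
Your proof is circular. For the ``moreover'' clause you invoke Proposition~\ref{prop:four_sys_cand} to restrict $\mathrm{S}(X)$ to $\{\alpha,\beta,\gamma,\delta\}$, but in the paper Proposition~\ref{prop:four_sys_cand} is proved \emph{after} Lemma~\ref{lem:section}, and its proof opens precisely with ``By Lemma~\ref{lem:section}, for $X\in\mathcal{P}_g\cap\pi^{*}(F_0)$, $\gamma\in\mathrm{S}(X)$''; the reflection argument inside the pentagon $P$ in Proposition~\ref{prop:four_sys_cand} needs $\gamma$ to already be a systole so that the image of a competing curve $\eta$ meets the bottom edge $A_1A$ at most once. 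Without Proposition~\ref{prop:four_sys_cand}, what you actually have from Lemma~\ref{lem:family_sys_cand} and Lemma~\ref{lem:bd_not_spine} is only that some systole $\eta$ lies in $L_{CD}\cup L_{CE}$; nothing yet identifies $\eta$ with $\alpha$ or $\gamma$ (Claim~\ref{claim:curve_in_family} records where $\alpha,\beta,\gamma,\delta$ live, not that they are the \emph{only} candidates in those families), so the closing step $\ell_\gamma\le\ell_\alpha=\sys(X)$ has no hypothesis to hang on.

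The paper avoids this by constructing $s$ geometrically rather than abstractly: it takes the systole $\eta\in L_{CD}\cup L_{CE}$ (which exists by Lemma~\ref{lem:family_sys_cand} and Lemma~\ref{lem:bd_not_spine}), \emph{declares} it to be the cuff $\gamma$ of a new Fenchel--Nielsen coordinate $(c,t)$, then verifies via the seam construction and the hyperbolic trigonometry identity $\tanh(|t_\alpha|/2)/\tanh(c/2)=\tanh(t/2)/\tanh(c_\alpha/2)$ that the resulting point satisfies $|t_\alpha|\le t\le c\le c_\alpha$, i.e.\ lies in $\pi^{*}(F_0)$. With this definition the ``moreover'' is true by construction of $s$, and bijectivity of $s$ (via Proposition~\ref{prop:fundamental_curve}) transports it to all of $\mathcal{P}_g\cap\pi^{*}(F_0)$. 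Your first part (an abstract set-theoretic section via the fundamental domain and $\modo_g^{\pm}$-invariance of $\mathcal{P}_g$) is fine as far as it goes, but it does not record the cuff and so gives you no leverage on the second part; you would still need to supply an independent, non-circular argument that the systole in $L_{CD}\cup L_{CE}$ coincides with $\gamma$ for a point of the fundamental domain, which is essentially the content of the paper's coordinate computation.
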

By Proposition \ref{prop:fundamental_curve}, one can see the map $s$ is bijective. 

\begin{proof}
 First, we construct the map $s$. 
 For a surface $X \in \pi^{*}(\mathcal{T}_{0,4})\cap \mathcal{P}_g$, by Lemma \ref{lem:family_sys_cand} and \ref{lem:bd_not_spine}, there is a systole $\eta$, contained in $L_{CD}\cup L_{CE}$. 
 Therefore, for $Y \in \mathcal{P}^{\pm}_{0,4}$, there is a multi-geodesic consisting of $g+1$ systoles, cutting $Y$ into two $(g+1)$-holed spheres. Each sphere admits the $\rho$ action. We let this multi-geodesic be the cuff of $s(Y)$ and denote it as $\gamma = \left\{ \gamma_1, \gamma_2, ..., \gamma_{g+1} \right\} $. In particular, when considering the Fenchel-Nielsen coordinate $(c,t)$ of $s(Y)$, we have
 \[
  c \overset{\mathrm{def}}{=} \frac{1}{2} \ell_{\gamma_i}(s(Y)), 
 \]
 for any $i = 1,2,...,g+1$. 

 The next thing is to construct the multi-geodesic $\alpha$ and the twist parameter of $s(Y)$. 
 The geodesic $\gamma$ cuts $s(Y)$ into two $(g+1)$-holed spheres. Let $s_i$ and $s_i'$ be the shortest common perpenticular between $\gamma_i$ and $\gamma_{i+1}$ in the two $(g+1)$-holed spheres respectively, see Figure \ref{fig:tt_alpha_1}. The points $U_i, V_i$ are endpoints of $s_i$, while $U_i', V_i'$ are endpoints of $s_i'$. The points $U_i, U_i', V_{i-1}, V_{i-1}'$ lie on the cuff $\gamma_i$. 
 We first consider the case $U_i$ does not coincide with $V_{i-1}'$, and thus $U_i'$ does not coincide with $V_{i-1}$. 
 Let $U_iU_i'$ be the shortest arc contained in $\gamma_i$ joining $U_iU_i'$; let $V_{i-1}V_{i-1}'$ be the shortest arc contained in $\gamma_i$ joining $V_{i-1}V_{i-1}'$. Define $\alpha_i$ to be the geodesic isotopic to the piecewise geodesic $U_iU_i'\cup s_i'\cup V_i'V_i \cup s_i$. Define the twist parameter $t$ of $s(X)$ as the length of $U_iU_i'$ or $V_iV_i'$. One may choose a representative of $Y$ with a proper orientation to guarantee $t \ge 0$. 

 \begin{figure}[htbp]
  \centering
  \includegraphics{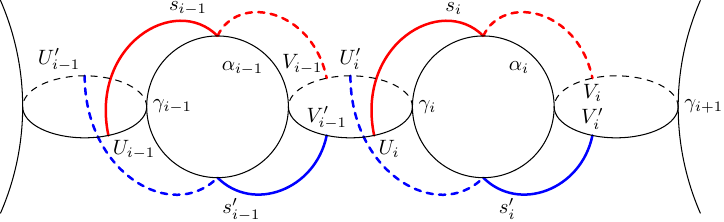}
  \caption{The twist parameter of $s(Y)$}
  \label{fig:tt_alpha_1}
 \end{figure}

 Therefore, we have constructed the map $s$ by constructing the Fenchel-Nielsen coordinate $(c,t)$ of the image $s(Y)$. 
 Since $\gamma$ is the systole of $X$, we have $\ell_{\gamma_i}(s(Y)) \le \ell_{\alpha_i}(s(Y))$ for $i=1,2,...,g+1$. Since $U_iU_i'$ and $V_{i-1}V_{i-1}'$ share the same length, and the assumption that $U_i$ does not coincide with $V_{i-1}'$, and $U_i'$ does not coincide with $V_{i-1}$, then $t < c$. 

For the case $U_i$ coincides with $V_{i-1}'$, and thus $U_i'$ coincides with $V_{i-1}$, one may assign $t=c$, and construct $\alpha$ as the former case. Notice the difference to the former case is that there are two choices of $U_iU_i'$ and $V_iV_i'$ such that $\alpha$ is $\rho$-invariant. But only one of them satisfies $t \ge 0$, and we take this choice. 

 The only thing left to check is $|t_{\alpha}| \le c_{\alpha}$. 

 If $t=0$, by (\ref{for:tt_alpha_0}), then $t_{\alpha} = 0$, so we only consider the case $t >0$. 
 Consider the seam $U'_{i-1}V'_{i-1}$ between $\gamma_{i-1}$ and $\gamma_i$ ; the seam $W_iT_i$ in the coordinate $(c_{\alpha}, t_{\alpha})$ between $\alpha_{i-1}$ and $\alpha_i$, see Figure \ref{fig:tt_alpha_2}. The mid-point $D_{i-1}$ of $U'_{i-1}V'_{i-1}$ is on $\alpha_{i-1}$, while the mid-point $E_{i}$ of $W_{i}T_{i}$ is on $\gamma_{i}$ by symmetry. The point $C_{i-1}$ is the intersection point of $\gamma_{i-1}$ and $\alpha_{i-1}$, $C'_{i-1}$ is the intersection point of $\alpha_{i-1}$ and $\gamma_i$, $C_i$ is the intersection point of $\gamma_i$ and $\alpha_i$. The point pairs $(C_{i-1}, C_{i-1}')$, $(C_{i-1}', C_i)$ separate each of $\alpha_{i-1}$, $\gamma_i$ into two arcs with equal length respectively. The point $E_i$ is the mid-point of one of the two arcs joining $C_{i-1}'C_i$; the point $D_{i-1}$ is the mid-point of one of the two arcs joining $C_{i-1}C_{i-1}'$. 

    \begin{figure}[htbp]
 \centering
 \begin{subfigure}[htbp]{\textwidth}
 \begin{center}
  \includegraphics{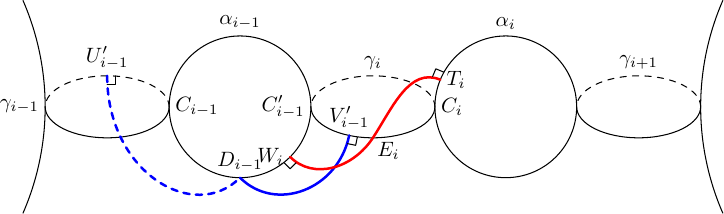}
 \end{center}
 \caption{The triangles in the surface}
 \label{fig:tt_alpha_2}
 \end{subfigure}

 \begin{subfigure}[htbp]{\textwidth}
 \begin{center}
  \includegraphics{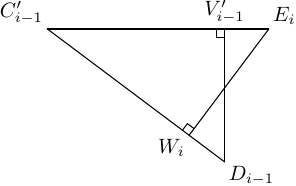}
 \end{center}
 \caption{The triangles}
 \label{fig:tt_alpha_3}
 \end{subfigure}

 \caption{Estimating $|t_{\alpha}|$}
 \label{fig:tt_alpha}
\end{figure}

In the right-angled triangles $\triangle C_{i-1}' D_{i-1} V'_{i-1}$ and $\triangle C'_{i-1}E_iW_i$ illustrated in Figure \ref{fig:tt_alpha_3}, we have the edge lengths
 \begin{eqnarray*}
  C'_{i-1}E_i = \frac{c}{2}, & C'_{i-1}D_{i-1} = \dfrac{c_{\alpha}}{2}, & E_iW_i = \frac{s_{\alpha}}{2}, \\
  D_{i-1}V'_{i-1} = \frac{s}{2}, & C'_{i-1}V'_{i-1} = \dfrac{t}{2}, & C'_{i-1}W_i = \frac{|t_{\alpha}|}{2}. 
 \end{eqnarray*}

 By \cite[Page 454, 2.2.2 (vi)]{buser2010geometry}, we have
 \[
  \frac{\tanh C'_{i-1}V'_{i-1}}{\tanh C'_{i-1}D_{i-1}} = \frac{\tanh C'_{i-1} W_i}{\tanh C'_{i-1} E_i}, 
 \]
 namely, 
 \[
  \frac{\tanh \frac{|t_{\alpha}|}{2}}{\tanh \frac{c}{2}} = \frac{\tanh \frac{t}{2}}{\tanh \frac{c_{\alpha}}{2}}. 
 \]
 Note that $\gamma$ is a systole, then $c_{\alpha} \ge c$ and $|t_{\alpha}| \le t \le c \le c_{\alpha}$. 

 Notice that $s$ is well-defined even if on the $Y\in \mathcal{P}^{\pm}_{0,4} \subseteq \mathcal{T}_g/ \modo^{\pm}_g$ in which the choice of $\gamma$ is not unique, because for any choice of $\gamma$, the map induced by $\gamma$ maps $Y$ to a point in $\pi^{*}(F_0)$ and $\pi^{*}(F_0)$ is a fundamental domain of the covering $\Pi$. Recall $\Pi$ maps the Teichm\"uller curve $\pi^{*}(\mathcal{T}_{0,4}) \subseteq \mathcal{T}_g$ to its image in $\mathcal{T}_g/ \modo_g^{\pm}$. Therefore, for any choice of $\gamma$ on $Y$, the map induced by $\gamma$ is unique. 
 The proof is complete. 
\end{proof}

Now we are ready to prove Proposition \ref{prop:four_sys_cand}.
\begin{proof}[Proof of Proposition \ref{prop:four_sys_cand}]
 By Lemma \ref{lem:section}, for $X\in\mathcal{P}_g \cap \pi^{*}(F_0)$, $\gamma \in \mathrm{S}(X)$. What is left to prove is $\alpha, \beta, \gamma, \delta$ are the unique systole candidate in $L_{CD}, L_C, L_{CE}, L_{DE}$ respectively. 

 \begin{figure}[htbp]
  \centering
  \includegraphics{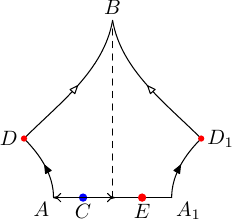}
  \caption{The pentagon $P$}
  \label{fig:half_pent_3}
 \end{figure}
 The multi-geodesic $\gamma$ is the unique systole candidate in $L_{CE}$ by Lemma \ref{lem:family_unique}. 
 To show this for the other three curve families, one may take a $4$-right-angled pentagonal fundamental domain $P$ of $O_3$ illustrated in Figure \ref{fig:half_pent_3}. In this pentagon, $B, C, E$ correspond to the singular points $B, C, E$ in $O_3$ respectively; $D$ and $D_1$ correspond to the singular point $D$ in $O_3$. Recall the orbifold $O_3$ is illustrated in Figure \ref{fig:22nn_222n_2}. For the lengths of the edges of this pentagon, one may see that 
 \[
  |AD| = |A_1D_1| = \frac{s}{2};
 \]
 \[
 |CE| = \frac{c}{2}, |AC| = \frac{t}{2}, |EA_1| = \frac{c-t}{2}.  
 \]

 Let $\eta \in L_{CD}$ be a systole candidate. The image of $\eta$ in $P$ consists of several geodesic segments as illustrated in Figure \ref{fig:half_pent_1}. Recall that $\gamma$ is a systole and its image in $P$ is the segment $A_1A$. Since $\eta$ is a systole candidate, any segment of $\eta$ in  $P$ is disjoint with the edge $A_1A$ except one segment joining $C$ by Lemma \ref{lem:intersect}. Notice that $P$ is axisymmetric with respect to the dashed line in Figure \ref{fig:half_pent_3} except the bottom edge $A_1A$. Then one may reflect some segments disjoint with $A_1A$ to obtain a piecewise geodesic segment, joining $C$ and $D$ (or $D_1$ ), with the same length as the image of $\eta$. 
 Therefore, the image of $\eta$ is longer than the segment $CD$ or $CD_1$. This implies  $\eta$ is longer than the simple closed geodesic lifts from $CD$ or $CD_1$ because the concerned curves join the same singular points in the orbifold $O_3$. 
 The last thing is comparing the length of $CD$ and $CD_1$. When $t \ne c$, $CD$ is shorter than $CD_1$. When $t =c$, $CD$ has the same length as $CD_1$. In this case, both $CD$ and $CD_1$ are not systoles by Lemma \ref{lem:family_unique}. Thus $CD$ lifts to the unique systole candidate in $L_{CD}$ when $X \in \pi^{*}(F_0)$. One can see that $CD$ lifts to $\alpha$ by assigning $t=0$. At this time, both $CD$ and $\alpha$ coincide with seams. Therefore, we have shown that $\alpha$ is the unique systole candidate in $L_{CD}$ when $X \in \pi^{*}(F_0)$. 
 
    \begin{figure}[htbp]
 \centering
 \begin{subfigure}[htbp]{.45\textwidth}
 \begin{center}
  \includegraphics{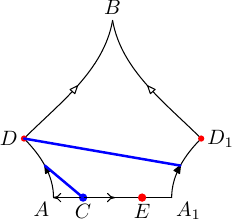}
 \end{center}
 \caption{$\eta$ of several segments}
 \label{fig:half_pent_1}
 \end{subfigure}
 \begin{subfigure}[htbp]{.45\textwidth}
 \begin{center}
  \includegraphics{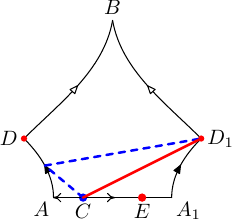}
 \end{center}
 \caption{Reflect some edges}
 \label{fig:half_pent_2}
 \end{subfigure}
 
 \caption{The image of $\eta$ in $P$}
 \label{fig:half_pent}
\end{figure}

    \begin{figure}[htbp]
 \centering
 \begin{subfigure}[htbp]{.45\textwidth}
 \begin{center}
  \includegraphics{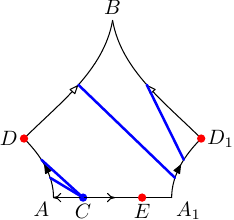}
 \end{center}
 \caption{$\eta$ of several segments}
 \label{fig:01_curve_2}
 \end{subfigure}
 \begin{subfigure}[htbp]{.45\textwidth}
 \begin{center}
  \includegraphics{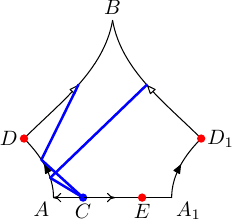}
 \end{center}
 \caption{Reflect some edges}
 \label{fig:01_curve_3}
 \end{subfigure}

 \begin{subfigure}[htbp]{.45\textwidth}
 \begin{center}
  \includegraphics{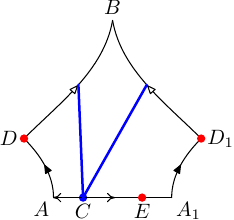}
 \end{center}
 \caption{$\beta$ in $P$}
 \label{fig:01_curve_4}
 \end{subfigure}
 
 \caption{The $L_C$ family in $P$}
 \label{fig:01_curve}
\end{figure}

    \begin{figure}[htbp]
 \centering
 \begin{subfigure}[htbp]{.45\textwidth}
 \begin{center}
  \includegraphics{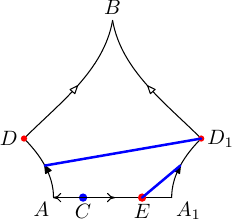}
 \end{center}
 \caption{$\eta$ of several segments}
 \label{fig:11_curve_1}
 \end{subfigure}
 \begin{subfigure}[htbp]{.45\textwidth}
 \begin{center}
  \includegraphics{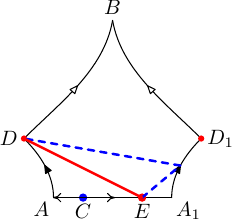}
 \end{center}
 \caption{Reflect some edges}
 \label{fig:11_curve_2}
 \end{subfigure}
 
 \caption{The curve family $L_{DE}$ in $P$}
 \label{fig:11_curve}
\end{figure}

The rest of the proof is to show $\beta, \delta$ are the unique systole candidates in $L_{DE}, L_C$ respectively. The proof is exactly the same as the case of $L_{CD}$, illustrated in Figure \ref{fig:01_curve} and \ref{fig:11_curve}. The proof is complete. 

\end{proof}

\subsection{The geodesic $\delta$}
The aim of this subsection is to prove Lemma \ref{lem:delta_not}, namely $\delta$ is not a systole when $X\in \mathcal{P}_g \cap \pi^{*}(F_0)$. To obtain this, some properties of $\ell_{\beta_i}$ ($i=1,2,..., g+1$) are needed. We first give these properties as a preparation. 

\begin{lemma}
 For the Fenchel-Nielsen coordinate $(c,t)$, when $0 \le t \le c$, we treat the length functions $\ell_{\beta_i} = \ell_{\beta_i}(c,t)$, $\ell_{\gamma_i} = \ell_{\gamma_i}(c,t)$ as functions of the variables $c, t$. 
 Consider the equation 
 \begin{equation}
  \ell_{\beta_i}(c,t) = \ell_{\gamma_i}(c,t). 
  \label{for:bc_eq}
 \end{equation}
 \begin{enumerate}
  \item If $t=c$, (\ref{for:bc_eq}) has a unique solution $c_1$. 
  \item If $t=0$, (\ref{for:bc_eq}) has no solution. 
  \item If $0<c<c_1$, (\ref{for:bc_eq}) has no solution. In particular, when $0<c<c_1$, $\ell_{\beta_i} > \ell_{\gamma_i}$. 
 \end{enumerate}
 \label{lem:bc_eqs}
\end{lemma}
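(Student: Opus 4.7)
My plan is to handle the three parts essentially in order, with parts (1) and (3) linked by a single explicit computation. Part (2) is immediate: the inequality $\ell_{\gamma_i}(c,0) < \ell_{\beta_i}(c,0)$ is exactly the second statement of (\ref{for:t=0}), so (\ref{for:bc_eq}) has no solution when $t=0$.

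For part (1), I would substitute $t=c$ directly into (\ref{for:beta}) to get $\cosh(\ell_{\beta_i}/2) = \cosh(s)\cosh^2(c/2) - \sinh^2(c/2)$. Applying $\cosh s = 1 + 2\sinh^2(s/2)$ and the seam-cuff relation (\ref{for:seam_cuff}) $\sinh(s/2)\sinh(c/2) = \cos(\pi/(g+1))$, this becomes
\[
\cosh(\ell_{\beta_i}/2) \;=\; 1 + \frac{2\cos^2(\pi/(g+1))\cosh^2(c/2)}{\sinh^2(c/2)}.
\]
Setting this equal to $\cosh c = 1 + 2\sinh^2(c/2)$ and clearing denominators reduces (\ref{for:bc_eq}) to $\cos(\pi/(g+1))\cosh(c/2) = \sinh^2(c/2)$. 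In the variable $v = \cosh(c/2) \geq 1$, this is the quadratic $v^2 - \cos(\pi/(g+1))\,v - 1 = 0$, which has a unique positive root (necessarily exceeding $1$ since $\cos(\pi/(g+1)) > 0$ for $g \geq 2$). The corresponding $c_1 > 0$ is the unique solution claimed in (1).

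For part (3), the idea is to exploit monotonicity in $t$ so that only the boundary case $t=c$ needs to be examined. By (\ref{for:pd_beta}), $\partial \ell_{\beta_i}/\partial t < 0$ while $\ell_{\gamma_i} = 2c$ is independent of $t$, so the function $t \mapsto \ell_{\beta_i}(c,t) - 2c$ is strictly decreasing on $[0,c]$. Consequently, positivity on the whole strip follows from positivity at $t=c$. The computation in part (1) shows that the sign of $\ell_{\beta_i}(c,c) - 2c$ is opposite to the sign of the quadratic $Q(v) := v^2 - \cos(\pi/(g+1))\,v - 1$ at $v = \cosh(c/2)$. Since $Q$ has unique positive root $v_1 = \cosh(c_1/2)$ and is strictly negative on $[1, v_1)$, we conclude $\ell_{\beta_i}(c,c) > 2c$ precisely for $0 < c < c_1$, which yields (3).

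No step is genuinely hard; the mild obstacle is bookkeeping the hyperbolic identities so that the seam length $s$ is eliminated cleanly and the analysis reduces to a single quadratic in $v = \cosh(c/2)$ that governs all three parts simultaneously.
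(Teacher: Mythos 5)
Your proof is correct. Parts (1) and (3) take a genuinely more explicit route than the paper, and the difference is worth noting. For (1), the paper performs essentially the same initial reduction (computing $\cosh\bigl(\ell_{\beta_i}(c,c)/2\bigr) = 2\cos^2\frac{\pi}{g+1} + \cosh s$) but then concludes by a soft monotonicity-plus-intermediate-value argument: $\ell_{\beta_i}(c,c)$ is strictly decreasing in $c$ (via the seam--cuff relation (\ref{for:seam_cuff}), $s$ decreases as $c$ grows) and blows up as $c\to 0^+$, while $\ell_{\gamma_i}(c,c)=2c$ increases from $0$, so they cross once. You instead push the algebra further, eliminating $s$ to arrive at the quadratic $v^2 - \cos\frac{\pi}{g+1}\,v - 1 = 0$ in $v=\cosh(c/2)$, from which existence and uniqueness drop out immediately and $c_1$ has a closed form. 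For (3), both proofs reduce to the diagonal $t=c$ via the monotonicity (\ref{for:pd_beta}); the paper then chains inequalities using the monotonicity of $\ell_{\beta_i}(c,c)$ in $c$, whereas you read off the sign of $\ell_{\beta_i}(c,c)-2c$ directly from the sign of the quadratic $Q(v)$ on $[1,v_1)$. Part (2) is identical. Your version buys an explicit formula for $c_1$ and avoids the need to argue separately that $\ell_{\beta_i}(c,c)\to+\infty$ and is strictly decreasing; the paper's version is shorter on algebra and uses the monotonicity machinery it already has in place.
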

\begin{proof}
 $(1)$ When $t=c$, by (\ref{for:beta}), we have
 \begin{eqnarray}
  \cosh \frac{\ell_{\beta_i}(c,c)}{2}
  &=& \cosh s \cosh^{2} \frac{c}{2} - \sinh^2 \frac{c}{2} \nonumber \\
  &=& (\cosh s -1) \sinh^2 \frac{c}{2} + \cosh s \nonumber \\
  &=& 2\sinh^2 \frac{s}{2} \sinh^2 \frac{c}{2} + \cosh s \nonumber\\
  &=& 2\cos^2 \frac{\pi}{g+1} + \cosh s \text{\,\,\,(by (\ref{for:seam_cuff})).} \label{for:cbeta}
 \end{eqnarray}
 By (\ref{for:seam_cuff}), $\ell_{\beta_i}(c,c)$ is strictly decreasing with respect to $c$, and $\lim\limits_{c\to 0^+}\ell_{\beta_i}(c,c) = + \infty$. On the other hand $\ell_{\gamma_i} (c,c) = 2c$. Thus $\ell_{\gamma_i}(c,c)$ is strictly increasing with respect to $c$, and $\ell_{\gamma_i}(c,c)=0$ when $c=0$. Therefore  (\ref{for:bc_eq}) has a unique solution. One may denote it as $c_1$. 

 $(2)$ When $t=0$, it follows directly from (\ref{for:t=0}). 

 $(3)$ For $0 \le t \le c < c_1$, suppose for contradiction that 
 \begin{equation}
 \ell_{\beta_i}(c,t) \le  \ell_{\gamma_i}(c,t) = 2c.
 \label{for:bct}
 \end{equation}
 By (\ref{for:pd_beta}), we have
 \begin{equation}
\ell_{\beta_i}(c,t) \ge \ell_{\beta_i}(c,c),
\label{for:betat}
 \end{equation}
 since $0 \le t \le c$. 
 We proved in $(1)$ that $\ell_{\beta_i}(c,c)$ is strictly decreasing with respect to  $c$. Thus when $c<c_1$, we have
\begin{equation}
 \ell_{\beta_i}(c,c) > \ell_{\beta_i}(c_1,c_1) = \ell_{\gamma_i}(c_1,c_1) = 2c_1 > 2c. 
 \label{for:bcc}
\end{equation}
Combining (\ref{for:bct}), (\ref{for:betat}), (\ref{for:bcc}), one may get a contradiction. The proof is complete.
\end{proof}
Consider a function
\begin{eqnarray*}
 F: [c_1, +\infty)\times (0,1] & \to \mathbb{R} \\
 (c,u) &\mapsto & \ell_{\beta_i}(c,cu) - \ell_{\gamma_i}(c,cu) = \ell_{\beta_i}(c,cu) - 2c. 
\end{eqnarray*}
We have the following two properties
\begin{lemma}
 There is a function 
 \[
  u_1:[c_1, +\infty) \to (0,1]
 \]
 such that $F(c, u_1(c)) = 0$. 
 \label{lem:u_implicit}
\end{lemma}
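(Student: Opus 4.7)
The plan is to produce $u_1(c)$ for each fixed $c \ge c_1$ by applying the intermediate value theorem to the single-variable function $u \mapsto F(c,u)$ on $[0,1]$, using the monotonicity in $t$ from \eqref{for:pd_beta} to guarantee uniqueness.

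First I would check the sign of $F$ at the endpoints $u=0$ and $u=1$. At $u=0$, the twist is $t=0$, so inequality \eqref{for:t=0} gives $\ell_{\beta_i}(c,0) > \ell_{\gamma_i}(c,0) = 2c$, whence $F(c,0) > 0$ for every $c > 0$. At $u=1$, we have $t=c$, and the computation (\ref{for:cbeta}) carried out in the proof of Lemma \ref{lem:bc_eqs}(1) shows that $\ell_{\beta_i}(c,c)$ is strictly decreasing in $c$ and satisfies $\ell_{\beta_i}(c_1,c_1) = 2c_1$. Consequently, for $c \ge c_1$ one has $\ell_{\beta_i}(c,c) \le 2c_1 \le 2c$, hence $F(c,1) \le 0$, with equality exactly when $c = c_1$.

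Next I would use monotonicity in $u$. Since $t = cu$ with $c>0$ fixed, differentiating gives $\partial F/\partial u = c\cdot \partial \ell_{\beta_i}/\partial t$, and by \eqref{for:pd_beta} this is strictly negative. Thus $u \mapsto F(c,u)$ is continuous and strictly decreasing on $[0,1]$, so by the intermediate value theorem it has a unique zero $u_1(c) \in (0,1]$. At $c=c_1$ this zero is $u_1(c_1)=1$; for $c > c_1$ we have $F(c,1) < 0$, so $u_1(c) \in (0,1)$. This defines the desired function $u_1:[c_1,+\infty) \to (0,1]$ with $F(c, u_1(c)) = 0$.

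There is no real obstacle here: everything reduces to the sign information already established in \eqref{for:t=0} and Lemma \ref{lem:bc_eqs}, together with the sign of $\partial \ell_{\beta_i}/\partial t$. If smoothness of $u_1$ were needed later, it would follow immediately from the implicit function theorem, since $\partial F/\partial u \ne 0$ on the zero set.
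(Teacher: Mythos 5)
Your proposal is correct. The main difference from the paper is methodological: the paper applies the implicit function theorem at the known zero $(c_1,1)$ (established in Lemma~\ref{lem:bc_eqs}(1)), obtains local existence, and then uses Lemma~\ref{lem:bc_eqs}(2) and the uniqueness part of Lemma~\ref{lem:bc_eqs}(1) to control the range of $u_1$ and guarantee the implicit function continues over all of $[c_1,+\infty)$. You instead fix $c\ge c_1$ and argue slice by slice: $F(c,0)>0$ from \eqref{for:t=0}, $F(c,1)\le 0$ from the strict decrease of $\ell_{\beta_i}(c,c)$ and the value $\ell_{\beta_i}(c_1,c_1)=2c_1$, and strict monotonicity in $u$ from \eqref{for:pd_beta}; the intermediate value theorem then produces the unique zero directly. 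Your route is more elementary and sidesteps the mild subtlety that the implicit function theorem is only local, so global continuation must be justified (which the paper does implicitly via the range argument). You also correctly observe that smoothness, if needed, follows afterward from the nonvanishing of $\partial F/\partial u$. One minor point worth stating explicitly: the paper declares the domain of $F$ to be $[c_1,+\infty)\times(0,1]$, so evaluating at $u=0$ requires the (obvious) continuous extension of $F$, since the length function $\ell_{\beta_i}(c,t)$ is defined and smooth at $t=0$.
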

\begin{proof}
 The main ingredients to prove this lemma are Lemma \ref{lem:bc_eqs} and the implicit function theorem. 

 By Lemma \ref{lem:bc_eqs} $(1)$, we have
 \begin{equation*}
 F(c_1, 1) = 0. 
 \end{equation*}
 By (\ref{for:pd_beta}), we have
 \begin{equation*}
  \frac{\partial F}{\partial u} < 0
 \end{equation*}
 for $(c,u) \in [c_1, +\infty)\times (0,1]$. 
 By the implicit function theorem, there is a $u_1: [c_1, +\infty) \to (0,1]$ such that $F(c, u_1(c)) = 0$. 
The range $u_1( [c_1, +\infty)) \subseteq (0,1]$ 
because Lemma \ref{lem:bc_eqs} $(2)$ implies $u_1(c) >0$; the uniqueness part of Lemma \ref{lem:bc_eqs} $(1)$ implies $u_1(c) <1$ if $c>c_1$. 
The proof is complete. 
\end{proof}

For the function $u_1$ in Lemma \ref{lem:u_implicit}, we have
\begin{lemma}
 The preimage $u_1^{-1}( \frac{1}{2})$ consists of exactly one point. One may denote it as $c_{ \frac{1}{2}}$. In particular, when $g \ge 5$,  $c_{ \frac{1}{2}} < 2.318$; when $g = 3$, $c_{ \frac{1}{2}} < 1.925$. 
 \label{lem:u_level}
\end{lemma}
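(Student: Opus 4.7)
The plan is to reduce the condition $u_1(c)=\tfrac12$ to a single-variable transcendental equation in $c$ whose monotonicity yields uniqueness, and then to verify the numerical upper bounds by a direct estimate.

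First I would substitute $t=c/2$ into the length formula (\ref{for:beta}). With $t/2=c/4$ and $c-t/2=3c/4$, this gives
\[
 \cosh\frac{\ell_{\beta_i}(c,c/2)}{2}=\cosh s\,\cosh\frac{c}{4}\cosh\frac{3c}{4}-\sinh\frac{c}{4}\sinh\frac{3c}{4}.
\]
The equation $u_1(c)=\tfrac12$ says $\ell_{\beta_i}(c,c/2)=\ell_{\gamma_i}(c,c/2)=2c$, equivalently $\cosh(\ell_{\beta_i}/2)=\cosh c$. Using $\cosh c=\cosh(3c/4)\cosh(c/4)+\sinh(3c/4)\sinh(c/4)$, the above collapses to
\[
 (\cosh s-1)\cosh\tfrac{c}{4}\cosh\tfrac{3c}{4}=2\sinh\tfrac{c}{4}\sinh\tfrac{3c}{4},
\]
that is, $\sinh^2(s/2)=\tanh(c/4)\tanh(3c/4)$. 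Combining with the cuff--seam identity (\ref{for:seam_cuff}), which gives $\sinh^2(s/2)=\cos^2\frac{\pi}{g+1}/\sinh^2(c/2)$, and using $\sinh(c/2)=2\sinh(c/4)\cosh(c/4)$, I obtain the single-variable equation
\[
 \cos^2\frac{\pi}{g+1}=G(c),\qquad G(c)\overset{\mathrm{def}}{=}4\sinh^3\!\tfrac{c}{4}\,\cosh\!\tfrac{c}{4}\,\tanh\!\tfrac{3c}{4}.
\]

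Next, for uniqueness, I would observe that each of the factors $\sinh^3(c/4)$, $\cosh(c/4)$, $\tanh(3c/4)$ is strictly positive and strictly increasing on $(0,+\infty)$, so $G$ is strictly increasing. Combined with $G(0^+)=0$ and $\lim_{c\to\infty}G(c)=+\infty$, this yields a unique $c_{1/2}\in(0,+\infty)$ solving $G(c_{1/2})=\cos^2\frac{\pi}{g+1}$; this is the unique preimage $u_1^{-1}(\tfrac12)$.

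Finally, for the numerical bounds I would exploit monotonicity of $G$: it suffices to exhibit values of $c$ at which $G$ already exceeds the relevant target. For $g\ge 5$, since $\cos^2\frac{\pi}{g+1}<1$ uniformly, a direct evaluation gives
\[
 G(2.318)=4\sinh^3(0.5795)\cosh(0.5795)\tanh(1.7385)\approx 1.013>1,
\]
so $c_{1/2}<2.318$. For $g=3$, the target is $\cos^2(\pi/4)=\tfrac12$, and
\[
 G(1.925)=4\sinh^3(0.48125)\cosh(0.48125)\tanh(1.44375)\approx 0.5005>\tfrac12,
\]
so $c_{1/2}<1.925$. The only delicate point is the numerical verification for the $g=3$ case, where the margin $G(1.925)-\tfrac12$ is small; I would carry out the arithmetic with enough decimal digits (five or six) to make the strict inequality unambiguous, which is routine.
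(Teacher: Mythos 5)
Your proposal is correct and follows the same strategy as the paper: substitute $t=c/2$ into the length formulas, use \eqref{for:seam_cuff} to eliminate $s$, and reduce $u_1(c)=\tfrac12$ to a single-variable equation whose strict monotonicity gives uniqueness, followed by a numerical check of the two bounds. The paper performs the final reduction via the substitution $C=\cosh(c/2)$, obtaining $\cos^2\frac{\pi}{g+1}=(C-1)^2(2C+1)/(2C-1)$, while you keep the equation in the product form $\cos^2\frac{\pi}{g+1}=4\sinh^3\frac{c}{4}\cosh\frac{c}{4}\tanh\frac{3c}{4}$; one can verify algebraically (using $C-1=2\sinh^2\frac{c}{4}$, $2C+1=4\cosh^2\frac{c}{4}-1$, $2C-1=4\sinh^2\frac{c}{4}+1$) that the two are identical. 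Your product form makes monotonicity immediate (a product of positive increasing factors) and so avoids the paper's derivative computation $\mathrm{d}L/\mathrm{d}C$; on the other hand the paper's rational form in $C$ gives the exact root $C=3/2$ when $g=3$, so the bound $c_{1/2}<1.925$ follows without the close numerical margin you must carefully verify.
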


\begin{proof}
 By (\ref{for:gamma}) and (\ref{for:beta}), when $u = \frac{t}{c} = \frac{1}{2}$, $F(c, \frac{1}{2}) =0$ is equivalent to 
 \begin{eqnarray*}
  \cosh c &=& \cosh s \cosh \frac{c}{4}\cosh \frac{3c}{4} - \sinh \frac{c}{4}\sinh \frac{3c}{4} \\
   &=& \left(\cosh s -1\right)\cosh \frac{c}{4}\cosh \frac{3c}{4} + \cosh \frac{c}{4}\cosh \frac{3c}{4} - \sinh \frac{c}{4}\sinh \frac{3c}{4} \\
   &=& 2\sinh^2 \frac{s}{2} \cosh \frac{c}{4}\cosh \frac{3c}{4} + \cosh \frac{c}{2}\\
   &=&  \frac{\cos^2 \frac{\pi}{g+1}}{\sinh^2 \frac{c}{2}} \left( \cosh c + \cosh \frac{c}{2} \right) + \cosh \frac{c}{2}. 
 \end{eqnarray*}
 Let $C = \cosh \frac{c}{2}$ and $L = \cos^2 \frac{\pi}{g+1}$. Then we have
 \begin{equation}
  L = \frac{(C-1)^2 (2C+1)}{2C-1}. 
  \label{for:lc}
 \end{equation}
 Treat $L$ as a function of $C$. One may find that. 
 \[
  L(1) = 0; \lim\limits_{C\to +\infty} L(C) = +\infty. 
 \]
 Then for any $L\in(0,1)$, there is a $C > 1 $ satisfying the equation (\ref{for:lc}). 
 Moreover, we have
 \[
\frac{\mathrm{d} L}{\mathrm{d}C} = \frac{2(C-1)(4C^2 -2C +1 )}{(2C-1)^2}. 
 \]
 One may find that 
 \begin{equation}
 \frac{\mathrm{d} L}{\mathrm{d}C} >0,
 \label{for:dlc}
 \end{equation}
 when $C >1$. Thus, for a fixed $L\in(0,1)$ the number $C$  satisfying the equation (\ref{for:lc}) is unique. Let $c_{ \frac{1}{2}} \overset{\mathrm{def}}{=} 2 \arccosh (C)$ for $L = \cos^2 \frac{\pi}{g+1}$. We have proved the existence and uniqueness of $c_{ \frac{1}{2}}$. 

 The last thing is to estimate $c_{ \frac{1}{2}}$. When $g = 3$, $L = \cos^2 \frac{\pi}{4} = \frac{1}{2}$. Solving the equation (\ref{for:lc}), one may get 
 \[
  C = 1.5 \text{, namely } c_{ \frac{1}{2}} < 1.925.
 \]

 For $g \ge 5$, (\ref{for:dlc}) implies $\cosh \left(\frac{1}{2}c_{ \frac{1}{2}}  \right) $ is bounded from above by $C$ if taking $L=1$. At this time, one may get
 \[
  C < 1.75  ,
 \]
 which implies $c_{ \frac{1}{2}} < 2.318$. The proof is complete. 
\end{proof}

The function $u = u_1(c)$ is illustrated in Figure \ref{fig:uc}.
\begin{figure}[htbp]
 \centering
 \includegraphics{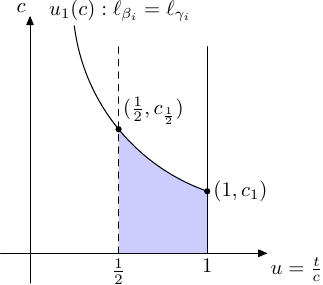}
 \caption{The function $u(c)$}
 \label{fig:uc}
\end{figure}

\begin{lemma}
 For $X \in \mathcal{P}_g \cap \pi^{*}(F_0)$, $\delta \notin \mathrm{S}(X)$. 
 \label{lem:delta_not}
\end{lemma}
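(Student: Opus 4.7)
The plan is a proof by contradiction: assume $\delta \in \mathrm{S}(X)$ for some $X \in \mathcal{P}_g \cap \pi^{*}(F_0)$. Since $\gamma \in \mathrm{S}(X)$ by Lemma \ref{lem:section}, the assumption forces $\ell_\delta = \ell_\gamma = 2c$, and also $\ell_\beta \ge 2c$. Using (\ref{for:delta_even}) and (\ref{for:delta_odd}), the identity $\ell_\delta = 2c$ takes the uniform form
\[
 \cosh \tfrac{s}{2}\, \cosh \tfrac{c-t}{2} \;=\; \cosh \tfrac{c}{M},
\]
where $M = 2g+2$ for even $g$ and $M = g+1$ for odd $g$. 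The strategy is to establish the strict inequality $\cosh \tfrac{s}{2} \cosh \tfrac{c-t}{2} > \cosh \tfrac{c}{M}$ throughout the subregion $R \subseteq F_0$ carved out by the constraint $\ell_\beta \ge 2c$, thereby contradicting equality.

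By Lemma \ref{lem:bc_eqs} and Lemma \ref{lem:u_implicit}, $R$ consists of the points with $c < c_1$ (with $t$ unconstrained in $[0,c]$) together with the points satisfying $c \ge c_1$ and $t/c \le u_1(c)$. I would split into three sub-regimes. First, for $c < c_1$: the relation (\ref{for:seam_cuff}) makes $s$ strictly decreasing in $c$, so it suffices to verify the endpoint estimate $(g+1)\,s(c_1) > c_1$, which is a short explicit computation using the closed-form values of $c_1$ and $s(c_1)$ extracted from the proof of Lemma \ref{lem:bc_eqs}(1). Then $(g+1)s > c$ yields $\cosh \tfrac{s}{2} > \cosh \tfrac{c}{2(g+1)} \ge \cosh \tfrac{c}{M}$, and $\cosh \tfrac{c-t}{2} \ge 1$ closes this regime.

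Second, for $c \ge c_{\frac{1}{2}}$: the uniqueness part of Lemma \ref{lem:u_level}, together with $u_1(c_1) = 1$ and $u_1(c) \to 0$ as $c \to \infty$, forces $u_1(c) \le \tfrac{1}{2}$, hence $c-t \ge c/2$ and $\cosh \tfrac{c-t}{2} \ge \cosh \tfrac{c}{4}$. For every $g \ge 2$ one has $c/4 \ge c/M$, with strict inequality except in the borderline case $g=3$, where strictness is supplied by $\cosh \tfrac{s}{2} > 1$. Third, for the intermediate range $c_1 \le c \le c_{\frac{1}{2}}$: the explicit upper bound on $c_{\frac{1}{2}}$ from Lemma \ref{lem:u_level} bounds $c$ uniformly from above; substituting into (\ref{for:seam_cuff}) produces a positive lower bound on $s$ that already exceeds $c/M$, so $\cosh \tfrac{s}{2}$ alone dominates $\cosh \tfrac{c}{M}$.

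The main obstacle is this intermediate regime $c_1 \le c \le c_{\frac{1}{2}}$, where both factors in the product $\cosh \tfrac{s}{2} \cosh \tfrac{c-t}{2}$ are only slightly larger than $1$ and the estimate must rest on the explicit numerical bounds on $c_{\frac{1}{2}}$ from Lemma \ref{lem:u_level}; the small-$g$ cases $g=2,4$ not explicitly covered there will need to be handled by the same kind of direct algebraic computation as in its proof.
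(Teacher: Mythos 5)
Your proposal takes a genuinely different route: a unified length-estimate argument for all $g$, whereas the paper splits by parity — for even $g$ it is purely topological ($\delta_1$ intersects each $\gamma_i$ twice, so by Claim~\ref{claim:once_intersect} it cannot share $\mathrm{S}(X)$ with $\gamma$, which Lemma~\ref{lem:section} places there; and $\{\beta,\delta\}$ does not fill), and for odd $g$ it first uses $\ell_{\delta}\le\ell_{\alpha}$ to reduce to $u\ge\tfrac12$ before estimating on a small compact triangle. The unified approach is appealing but has two real problems as written.

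First, in your regime $c<c_1$ the chain $\cosh\tfrac{s}{2}>\cosh\tfrac{c}{2(g+1)}\ge\cosh\tfrac{c}{M}$ has its second inequality reversed when $g$ is odd: there $M=g+1<2(g+1)$, so $\tfrac{c}{M}>\tfrac{c}{2(g+1)}$. The correct sufficient endpoint estimate is $(g+1)s(c_1)>2c_1$, not $>c_1$ — off by a factor of two. (Numerically this stronger inequality does hold, e.g.\ for $g=3$ one finds $\sinh\tfrac{c_1}{2}=1$ and $\sinh\tfrac{s_1}{2}=\tfrac1{\sqrt2}$, giving $4s_1\approx5.27>2c_1\approx3.53$, but your stated condition would not detect a failure.) Second, your regime $c\ge c_{\frac12}$ needs $u_1(c)\le\tfrac12$ for \emph{all} $c\ge c_{\frac12}$, which you obtain from $u_1(c)\to0$ as $c\to\infty$; that limit is never established in the paper, and Lemma~\ref{lem:u_level} by itself only gives uniqueness of the level set $u_1^{-1}(\tfrac12)$, not the shape of $u_1$ to the right of it. You could repair both issues at once by importing the paper's key observation that you omit: combining (\ref{for:alpha}) with (\ref{for:delta_even}) or (\ref{for:delta_odd}), if $u<\tfrac12$ then $\cosh\tfrac{\ell_{\delta}}{2M}=\cosh\tfrac{s}{2}\cosh\tfrac{c-t}{2}>\cosh\tfrac{s}{2}\cosh\tfrac{t}{2}=\cosh\tfrac{\ell_{\alpha}}{4}$, hence $\ell_{\delta}>\tfrac{M}{2}\ell_{\alpha}\ge\ell_{\alpha}\ge\ell_{\gamma}$ (using $M\ge4$ and $c\le c_{\alpha}$ in $\pi^*(F_0)$). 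This disposes of the whole half-strip $u<\tfrac12$ in one stroke, deletes your regimes one and two, and leaves only the compact wedge $\{u\ge\tfrac12,\ u\le u_1(c)\}$, where $c\le c_{\frac12}$ and the remaining explicit bound on $s$ is exactly your regime three.
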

\begin{proof}
 If $g$ is even, then for any $X \in \mathcal{P}_g \cap \pi^{*}(F_0)$, $\delta$ consists of one geodesic $\delta_1$. Thus $\delta_1$ intersects $\alpha_i$ and $\gamma_i$ twice for any $i= 1,2,...,g+1$, see \emph{e.g.} Figure \ref{fig:rotation_local_4}. 
 Suppose for contradiction that $\delta_1$ is a systole of some $X \in \mathcal{P}_g \cap \pi^{*}(F_0)$, then $\alpha_i$ and $\gamma_i$ are not systoles for $i=1,2,...,g+1$. On the other hand, $\delta_1$ and $\beta_1, \beta_2, ..., \beta_{g+1}$ do not fill $X$, because $\delta_1$ is a simple closed geodesic disjoint with $\{\beta_1, \beta_2, ..., \beta_{g+1}\}$. By Proposition \ref{prop:four_sys_cand}, $\mathrm{S}(X) \subseteq \left\{ \beta, \delta \right\} $, hence $X\notin \mathcal{P}_g$, we reach a contradiction. 

 For the case $g$ is odd, recall that $\delta$ consists of a pair of simple closed geodesics $\delta_1$ and $\delta_2$. Suppose for contradiction that $\delta$ is a systole of some $X \in \mathcal{P}_g \cap \pi^{*}(F_0)$. 
Recall since $X \in \pi^{*}(F_0)$, $\ell_{\gamma_i}(X) \le \ell_{\alpha_i}(X)$. On the other hand, $\left\{ \beta, \delta \right\} $ does not fill $X$. Thus for $X \in \mathcal{P}_g\cap \pi^{*}(F_0)$, $\left\{ \gamma, \delta  \right\} \subseteq \mathrm{S}(X)$. In other words, we have
\begin{equation}
  \ell_{\delta_j}(X) = \ell_{\gamma_i}(X) \le \ell_{\beta_i}(X) ;\, \ell_{\delta_j}(X) = \ell_{\gamma_i}(X) \le \ell_{\alpha_i}(X)
  \label{for:condition}
\end{equation}
 for $j=1,2$ and $i=1,2,...,g+1$. 

 For the Fenchel-Nielsen coordinate $(c,t)$, let $u \overset{\mathrm{def}}{=} \frac{t}{c}$. By (\ref{for:alpha}) and (\ref{for:delta_odd}), the condition $\ell_{\delta_j}(X)  \le \ell_{\alpha_i}(X)$ implies that
 \begin{equation}
  u \ge \frac{1}{2}. 
  \label{for:ad}
 \end{equation}
 On the other hand, by Lemma \ref{lem:bc_eqs} $(3)$, Lemma \ref{lem:u_implicit} and (\ref{for:pd_beta}), the condition $\ell_{\gamma_i}(X)  \le \ell_{\beta_i}(X)$ implies that
 \begin{equation}
  \text{either } c \le c_1 \text{ or }u \le u_1(c). 
  \label{for:bc}
 \end{equation}
 The points satisfying (\ref{for:ad}) and (\ref{for:bc}) are illustrated as the shaded area in Figure \ref{fig:uc}. Any point in this area satisfies 
 \begin{equation}
  c \le c_{ \frac{1}{2}}. 
  \label{for:c_up}
 \end{equation}
 By (\ref{for:seam_cuff}), one may get that
 \begin{equation}
  \sinh \frac{s}{2} \ge  {\cos\frac{\pi}{g+1}}\cdot \left( {\sinh \frac{c_{ \frac{1}{2}}}{2}} \right)^{-1}  \ge \begin{cases}
   \left({\sqrt{2}\sinh \frac{c_{ \frac{1}{2}}}{2}}  \right)^{-1}  , &\text{ if } g = 3 \\
   \left( {\frac{2}{\sqrt{3}}\sinh \frac{c_{ \frac{1}{2}}}{2}} \right)^{-1}  , &\text{ if } g  \ge 5
  \end{cases}. 
  \label{for:s_low}
 \end{equation}
 By (\ref{for:delta_odd}), one may get that
 \begin{equation}
  \ell_{\delta_i}(X) \ge (g+1)s \ge \begin{cases}
   4s, &\text{ if }g=3\\
   6s, & \text{ if }g \ge 5
  \end{cases}. 
  \label{for:d_low}
 \end{equation}
 Combining Lemma \ref{lem:u_level}, (\ref{for:c_up}), (\ref{for:s_low}), (\ref{for:d_low}), one may obtain that
 \begin{eqnarray*}
  \ell_{\delta_i}(X)&\ge& \begin{cases}
   4.77, &\text{ if }g = 3 \\
   6.85, &\text{ if }g \ge 5 \\
  \end{cases}; \\
  \ell_{\gamma_i}(X)&\le& \begin{cases}
   3.85, &\text{ if }g = 3 \\
   4.64, &\text{ if }g \ge 5 \\
  \end{cases}. 
 \end{eqnarray*}
 Thus $\ell_{\delta_i}(X) > \ell_{\gamma_i}(X)$, which contradicts to (\ref{for:condition}). The proof is complete. 
\end{proof}

\begin{proof}[Proof of Proposition \ref{prop:sys_cand}]
 The proof follows directly from Proposition \ref{prop:four_sys_cand} and Lemma \ref{lem:delta_not}. 
\end{proof}

\section{Thurston spine}
\label{sec:main}

In this section, we prove the main theorem. In the first subsection, Proposition \ref{prop:unique} is proved as a preparation. In the second subsection, the main theorem is proved. 

\subsection{Uniqueness of three systole point}
By the collar lemma (see \emph{e.g.} \cite[Theorem 4.1.1]{buser2010geometry}) and the Mumford compactness criterion (see \emph{e.g.} \cite{mumford1971remark}), there is a point $X_0 \in \pi^{*}(F_0)$ realizing the maximum of the systole function in $\pi^{*}(\mathcal{T}_{0,4})$, namely, $$\sys(X_0) \ge  \sys(X),\,\, \forall X \in \pi^{*}(F_0), $$
where $\sys(X)$ is the length of the systole of $X$. 
\begin{lemma}
 For $X\in \pi^{*}(\mathcal{T}_{0,4})$, if $\mathrm{S}(X) $ is a proper subset of $ \left\{ \alpha, \beta, \gamma \right\}$, then $X$ does not realize  the maximum of the systole function in $\pi^{*}(\mathcal{T}_{0,4})$. 

 \label{lem:max_abc}
\end{lemma}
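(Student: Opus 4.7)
The plan is to proceed by cases on $\mathrm{S}(X)$, which, by hypothesis, is either a singleton or a two-element subset of $\{\alpha,\beta,\gamma\}$. Since the systole function is $\modo^{\pm}_g$-invariant and $\pi^{*}(F_0)$ is a fundamental domain of the covering $\Pi$, I may assume $X \in \pi^{*}(F_0)$. The general strategy is to exhibit, in each case, an explicit tangent vector $(dc,dt)$ along which every length in $\mathrm{S}(X)$ strictly increases to first order; then by continuity of length functions, for a sufficiently small perturbation the lengths of all other simple closed geodesics stay strictly above the new minimum, so $\sys$ strictly increases.

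The one-systole cases are immediate from $\ell_\gamma=2c$ together with (\ref{for:pd_alpha}) and (\ref{for:pd_beta}): take $(dc,dt)=(1,0)$, $(0,1)$, or $(0,-1)$ for $\mathrm{S}(X)=\{\gamma\}$, $\{\alpha\}$, or $\{\beta\}$ respectively. The two-systole subcases involving $\gamma$ are only slightly more involved. For $\mathrm{S}(X)=\{\alpha,\gamma\}$, take $dc>0$ together with $dt$ slightly larger than $-(\partial_c\ell_\alpha/\partial_t\ell_\alpha)\,dc$, so that both $d\ell_\gamma$ and $d\ell_\alpha$ are positive. For $\mathrm{S}(X)=\{\beta,\gamma\}$, take $dc>0$ and $dt<0$; since $\partial_t\ell_\beta<0$, the term $\partial_t\ell_\beta\,dt$ is positive and dominates $\partial_c\ell_\beta\,dc$ once $|dt|$ is large enough, regardless of the sign of $\partial_c\ell_\beta$.

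The remaining case $\mathrm{S}(X)=\{\alpha,\beta\}$ is the main obstacle, because the sign of $\partial_c\ell_\beta$ is not made explicit in Section \ref{sec:pre} and it is a priori possible for $\nabla\ell_\alpha$ and $\nabla\ell_\beta$ to be antiparallel, in which case no single tangent direction can simultaneously increase both. My plan is to examine the locus $C:=\{\ell_\alpha=\ell_\beta\}$, which by the opposite signs $\partial_t\ell_\alpha>0>\partial_t\ell_\beta$ and the implicit function theorem is a smooth curve $t=t^{*}(c)$. Differentiating $\sys|_C=\ell_\alpha\circ(c,t^{*}(c))$ and using the relation $d\ell_\alpha=d\ell_\beta$ along $C$ yields
\[
 \frac{d\sys}{dc}\bigg|_{C} \;=\; \frac{\partial_c\ell_\alpha\,\partial_t\ell_\beta \;-\; \partial_t\ell_\alpha\,\partial_c\ell_\beta}{\partial_t\ell_\beta - \partial_t\ell_\alpha},
\]
whose denominator is strictly negative. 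Whenever this derivative is nonzero at $X$, moving along $C$ in the correct direction strictly increases $\sys$, while $\ell_\gamma(X)>\ell_\alpha(X)$ together with continuity keeps $\gamma$ off the systole set, closing the case.

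The hard part will be ruling out the degenerate locus on $C$ where the numerator vanishes (equivalently, where $\nabla\ell_\alpha$ and $\nabla\ell_\beta$ are antiparallel). I intend to address this by direct computation of $\partial_c\ell_\beta$ from (\ref{for:beta}) together with $s=s(c)$ via (\ref{for:seam_cuff}), verifying that the determinant $\partial_c\ell_\alpha\,\partial_t\ell_\beta-\partial_t\ell_\alpha\,\partial_c\ell_\beta$ does not vanish at points of $C\cap\pi^{*}(F_0)$ with $\ell_\alpha<\ell_\gamma$. As a fallback I can compare globally with the unique triple-systole point $X^{*}$ of Proposition \ref{prop:unique}, which lies on $C$: since $\sys|_C$ is real-analytic in $c$, it cannot be constant on a neighborhood of $X^{*}$ without contradicting uniqueness, so along $C$ one has $\sys(X)<\sys(X^{*})$ for $X$ near $X^{*}$, which produces the required point of larger systole.
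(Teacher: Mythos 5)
Your case analysis is sound and the one-systole cases plus the two cases involving $\gamma$ go through exactly as you describe (in fact more explicitly than in the paper, which only cites (\ref{for:gamma}), (\ref{for:pd_alpha}), (\ref{for:pd_beta}) and invokes a tangent vector $v$). The real issue is the antiparallel subcase of $\mathrm{S}(X)=\{\alpha,\beta\}$, and there your proposal has a genuine gap in both of the routes you offer.

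The ``direct computation'' route is never actually carried out: you announce the plan to verify that $\partial_c\ell_\alpha\,\partial_t\ell_\beta-\partial_t\ell_\alpha\,\partial_c\ell_\beta$ is nonvanishing on $C$, but that is precisely the hard content, and it is not at all obvious from (\ref{for:seam_cuff})--(\ref{for:beta}). The ``fallback'' route is circular: you appeal to the uniqueness statement of Proposition \ref{prop:unique}, but the paper's proof of Proposition \ref{prop:unique} opens by invoking Lemma \ref{lem:max_abc} (together with Proposition \ref{prop:sys_cand}) to identify $\mathrm{S}(X_0)=\{\alpha,\beta,\gamma\}$, so Proposition \ref{prop:unique} cannot be used to prove Lemma \ref{lem:max_abc}. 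Moreover, even setting circularity aside, real-analyticity of $\sys|_C$ only lets you rule out $\sys|_C$ being locally constant; an isolated critical point of $\sys|_C$ could still be a local maximum along $C$, which your argument does not exclude.

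The paper closes exactly this subcase by a different mechanism: when the projections of $\mathrm{d}\ell_{\alpha_i}$ and $\mathrm{d}\ell_{\beta_i}$ onto $T^*_X\pi^*(\mathcal{T}_{0,4})$ are antiparallel, take the $v$ with $\mathrm{d}\ell_{\alpha_i}(v)=\mathrm{d}\ell_{\beta_i}(v)=0$, and then invoke Wolpert's strict convexity of geodesic length functions along Weil--Petersson geodesics: $\mathrm{d}^2\ell_{\alpha_i}(v,v)>0$ and $\mathrm{d}^2\ell_{\beta_i}(v,v)>0$, so flowing along the WP geodesic through $(X,v)$ strictly increases both $\ell_\alpha$ and $\ell_\beta$ to second order, hence strictly increases $\sys$. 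Your tangent-to-$C$ direction coincides with this $v$ at the critical point, but $C$ is not a Weil--Petersson geodesic, so the convexity theorem does not transfer to $\sys|_C$; you would need control over the second fundamental form of $C$ that you do not have. If you want to salvage your approach without WP convexity, you would need to actually carry out the determinant computation; otherwise the cleanest fix is to adopt the paper's convexity argument for the degenerate direction.
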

\begin{proof}
 First consider the surface $X$, whose systoles consist of a single multi-geodesic among $\left\{ \alpha, \beta, \gamma \right\}$. Assume  $X\in \pi^{*}(\mathcal{T}_{0,4})$ satisfies $\mathrm{S}(X) = \{\eta\}$, for an $\eta \in \left\{ \alpha, \beta, \gamma \right\} $, where $\eta = \left\{ \eta_1, \eta_2, ...,\eta_{g+1} \right\} $. 
For any $X'$ in a sufficiently small neighborhood of $X$, one may have $$\mathrm{S}(X') = \{\eta\}.$$
 By (\ref{for:gamma}), (\ref{for:pd_alpha}), (\ref{for:pd_beta}), 
 there is a tangent vector $v \in T_{X} \pi^{*}(\mathcal{T}_{0,4})$ such that 
 \[
  \mathrm{d}\ell_{\eta_i}(v) >0, 
 \]
 for $i = 1,2,..., g+1$.  If $X'$ is on the flowline of the Weil-Petersson geodesic flow induced by $(X,v)$, then one may get that
 \[
  \ell_{\eta_i}(X) < \ell_{\eta_i}(X'). 
 \]
 Therefore, in a sufficiently small neighborhood of $X$, there is an $X'$ such that
 \[
  \sys(X) < \sys(X').
 \]
 Thus $X$ does not realize the maximum of the systole function on $\pi^{*}(\mathcal{T}_{0,4})$. 

 Consider the surface $X$ whose systoles consist of two multi-geodesics among $\left\{ \alpha, \beta, \gamma \right\}$. WLOG, one may assume $\mathrm{S}(X) = \left\{ \alpha, \beta \right\}$. For the surface $X$, if the projections of $\mathrm{d}\ell_{\alpha_i}$, $\mathrm{d}\ell_{\beta_i}$ on  $T^{*}_X \pi^{*}(\mathcal{T}_{0,4})$ are not vectors in opposite directions, then 
  there is a vector $v \in T_{X} \pi^{*}(\mathcal{T}_{0,4})$ such that 
 \[
  \mathrm{d}\ell_{\alpha_i}(v) >0, \mathrm{d}\ell_{\beta_i}(v) >0, 
 \]
 for $i = 1,2,..., g+1$. Then one may construct a surface $X'$ with larger systole than $X$ by $(X, v)$ as the single geodesic case. Therefore $X$ does not realize the maximum of the systole function on $\pi^{*}(\mathcal{T}_{0,4})$, if the projections of $\mathrm{d}\ell_{\alpha_i}$, $\mathrm{d}\ell_{\beta_i}$ on  $T^{*}_X \pi^{*}(\mathcal{T}_{0,4})$ are not vectors in opposite directions. 

If the projections of $\mathrm{d}\ell_{\alpha_i}$, $\mathrm{d}\ell_{\beta_i}$ on  $T^{*}_X \pi^{*}(\mathcal{T}_{0,4})$ are vectors in opposite directions, then 
  there is a vector $v \in T_{X} \pi^{*}(\mathcal{T}_{0,4})$ such that 
 \[
  \mathrm{d}\ell_{\alpha_i}(v) =0, \mathrm{d}\ell_{\beta_i}(v) =0, 
 \]
 Recall that 
 \[
  \mathrm{d}^2\ell_{\alpha_i}(v,v) >0, \mathrm{d}^2\ell_{\beta_i}(v,v) >0, 
 \]
 with respect to the Weil-Petersson metric by the strict convexity of length functions along Weil-Petersson geodesics (see \emph{e.g.} \cite{wolpert1987geodesic}). Then along the Weil-Petersson geodesic flow induced by $(X,v)$, there is a surface $X'$ such that
 \[
  \sys(X') > \sys(X), 
 \]
 when $X'$ is sufficiently close to $X$. Therefore $X$ does not realize the maximum of the systole function on $\pi^{*}(\mathcal{T}_{0,4})$, when $\mathrm{S}(X)$ consists of two of the three multi-geodesics. 
 The proof is complete. 

\end{proof}

\begin{proposition}
There is a unique point $X_0 \in \pi^{*}(F_0)$ such that $\mathrm{S}(X_0) = \{ \alpha, \beta, \gamma \}$. 
 \label{prop:unique}
\end{proposition}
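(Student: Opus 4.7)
The plan is to realize $X_0$ as a global maximizer of the systole function on $\pi^{*}(F_0)$. By the collar lemma, $\sys(X) \to 0$ as $X$ approaches any of the three cusps of $\Pi(\pi^{*}(F_0))$ (where $\alpha$, $\gamma$, or $\delta$ pinches), so by Mumford's compactness criterion $\sys$ attains its maximum at some $X_0 \in \pi^{*}(F_0)$. A standard argument, identical in spirit to the proof of Lemma~\ref{lem:max_abc}, shows that the systoles at any maximizer must fill, so $X_0 \in \mathcal{P}_g$, and Proposition~\ref{prop:sys_cand} then gives $\mathrm{S}(X_0) \subseteq \{\alpha,\beta,\gamma\}$. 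Lemma~\ref{lem:max_abc} rules out every proper subset, so $\mathrm{S}(X_0) = \{\alpha,\beta,\gamma\}$.

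\textbf{Uniqueness.} I would reduce to a one-dimensional problem by observing that $\ell_{\alpha} = \ell_{\gamma}$ is equivalent to $c = c_{\alpha}$ (by (\ref{for:gamma}) and the definition $c_{\alpha} = \ell_{\alpha_i}/2$), so any three-systole point must lie on the Teichm\"uller geodesic $L_{-1,1} = \{c = c_{\alpha}\}$ of Lemma~\ref{lem:l_pm1}. I would parametrize $L_{-1,1}$ by $c$, using $\cosh(t/2) = \cosh(c/2)/\cosh(s/2)$ (from $c_{\alpha} = c$ and (\ref{for:c_alpha})) together with $\sinh(s/2)\sinh(c/2) = \cos\frac{\pi}{g+1}$ to express $t = t(c)$ and $s = s(c)$, and set $h(c) := \ell_{\beta_i}(c, t(c)) - 2c$. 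The three-systole condition becomes $h(c) = 0$. For the endpoints, $h > 0$ at the value $c_{\min}$ where $L_{-1,1}$ meets $L_0$ (namely $t=0$) by (\ref{for:t=0}); as $c \to \infty$ along $L_{-1,1}$, one has $t \to c$, the ideal endpoint being the point $1 \in \partial_{\infty}\mathbb{H}^2$ shared with $L_1$, so (\ref{for:cbeta}) keeps $\ell_{\beta_i}$ bounded while $\ell_{\gamma_i} = 2c \to \infty$, giving $h \to -\infty$. Combined with strict monotonicity $h'(c) < 0$, the intermediate value theorem yields exactly one zero.

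\textbf{Main obstacle.} The delicate step is the strict monotonicity $h'(c) < 0$. Along $L_{-1,1}$ one has $h'(c) = \partial_c \ell_{\beta_i} + \partial_t \ell_{\beta_i}\cdot t'(c) - 2$; the sign $\partial_t \ell_{\beta_i} < 0$ from (\ref{for:pd_beta}) is favourable, and $t'(c) > 0$ can be read off from the explicit formula for $t(c)$ above (since $t$ grows from $0$ to the asymptote $c$ as $c$ grows from $c_{\min}$ to $\infty$). What remains is to control $\partial_c \ell_{\beta_i}$, which requires an honest implicit computation from (\ref{for:beta}) and (\ref{for:seam_cuff}). If a direct calculation turns out to be intricate, my fall-back strategy, matching the two-graph picture of the introduction, is to use the implicit functions $u = u_0(c)$ for $\ell_{\alpha_i} = \ell_{\gamma_i}$ and $u = u_1(c)$ for $\ell_{\beta_i} = \ell_{\gamma_i}$ (Lemmas~\ref{lem:u0_implicit} and~\ref{lem:u_implicit}) and, using the signs in (\ref{for:pd_alpha}) and (\ref{for:pd_beta}), show that $u_0$ is strictly increasing while $u_1$ is strictly decreasing on their common domain; two such graphs cross at most once, which together with the existence part above completes the proof.
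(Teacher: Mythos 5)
Your existence argument essentially matches the paper's: both produce $X_0$ as a global maximizer of the systole function via the collar lemma and Mumford compactness, observe that a maximizer's systoles must fill so $X_0 \in \mathcal{P}_g$, and then combine Proposition~\ref{prop:sys_cand} with Lemma~\ref{lem:max_abc} to get $\mathrm{S}(X_0) = \{\alpha,\beta,\gamma\}$.

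For uniqueness your route is genuinely different from the paper's, and this is where a gap appears. The paper never touches a one-dimensional IVT argument: given a putative second three-systole point $X_0'$, it runs the left earthquake path from $X_0'$ to $X_0$, uses Kerckhoff's strict convexity of length functions along earthquake paths to get $\mathrm{d}\ell_{\eta_i}(v) \ge 0$ for every $\eta \in \{\alpha,\beta,\gamma\}$ at the endpoint $X_0$, with strict inequality for at least one $\eta$ because the three multicurves fill, and then flows past $X_0$ to contradict maximality (invoking Lemma~\ref{lem:max_abc} a second time in the degenerate case). This completely sidesteps any need for sign information on $\partial_c \ell_{\beta_i}$ or $\partial_c \ell_{\alpha_i}$, which is exactly where your proposal runs into trouble.

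Concretely, your reduction to $L_{-1,1}$, the identification of $\{\ell_\alpha = \ell_\gamma\}$ with $\{c = c_\alpha\}$, and the sign analysis at the two ends of $L_{-1,1}$ are all fine (modulo a small imprecision in invoking (\ref{for:cbeta}), which is stated on $\{t=c\}$, rather than doing the asymptotic estimate along $L_{-1,1}$ where $t \to c$ but $t \ne c$). But the uniqueness hinges entirely on strict monotonicity of $h$, equivalently on $u_0$ increasing and $u_1$ decreasing, and neither of these can be ``read off'' from (\ref{for:pd_alpha}) and (\ref{for:pd_beta}). Applying the implicit function theorem to $G(c,u) = \ell_{\alpha_i}(c,cu) - 2c$ gives $u_0'(c) = -\partial_c G/\partial_u G$ with $\partial_u G = c\,\partial_t \ell_{\alpha_i} > 0$, but $\partial_c G = \partial_c\ell_{\alpha_i} + u\,\partial_t\ell_{\alpha_i} - 2$ is a negative term plus a positive term plus $-2$, whose overall sign (\ref{for:pd_alpha}) alone does not determine. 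For $u_1$ the situation is worse: $u_1'(c)$ involves $\partial_c\ell_{\beta_i}$, a quantity whose sign is nowhere established in the paper and certainly does not follow from (\ref{for:pd_beta}), which only controls $\partial_t\ell_{\beta_i}$. The needed derivative computations (chain rule through $s(c)$ via (\ref{for:seam_cuff})) are exactly the ``honest implicit computation'' you flag, and the fall-back does not avoid them; so as written the uniqueness step has a real hole. The earthquake-convexity argument is the cleaner fix.
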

\begin{proof}
 The proof of this proposition is an analog to \cite[Corollary 21]{schaller1999systoles}. 
 Assume $X_0$ is the point in $\pi^{*}(F_0)$, realizing the maximum of the systole function in $\pi^{*}(\mathcal{T}_{0,4})$. 
 By Proposition \ref{prop:sys_cand} and Lemma \ref{lem:max_abc}, $\mathrm{S}(X_0) = \left\{ \alpha, \beta, \gamma \right\} $. 
 Suppose for contradiction that there is another $X_0' \in \pi^{*}(F_0)$ whose systole is also $\{ \alpha, \beta, \gamma \}$. 
 Then there is a left earthquake flow in $\pi^{*}(\mathcal{T}_{0,4})$ flowing from $X_0'$ to $X_0$ by \cite[III.1.5.4.]{thurston2006earthquakes}. This path flows the set of geodesics $\{ \alpha, \beta, \gamma \}$ on $X_0'$ to the set of geodesics $\{ \alpha, \beta, \gamma \}$ on $X_0$. By the strict convexity of length functions along left earthquake path (see \emph{e.g.} \cite{kerckhoff1983nielsen}), for the vector $v \in T_{X_0}\pi^{*}(\mathcal{T}_{0,4})$ tangent to the flow, one may have
 \[
  \mathrm{d}\ell_{\alpha_i}(v) \ge 0, \mathrm{d}\ell_{\beta_i}(v) \ge 0, \mathrm{d}\ell_{\gamma_i}(v) \ge 0, 
 \]
 for $i =1,2,..., g+1$. As $\{ \alpha, \beta, \gamma \}$ fills the surface, at least one of the three numbers is $>0$. 
 Then by flowing along an earthquake path induced by $(X_0,v)$, one may get a surface $X'$ near $X_0$ such that 
 \[
  \sys(X_0) \le \sys(X'). 
 \]
 If $\sys(X_0) < \sys(X')$, then  $X_0$ does not realize the maximum of the systole function in $\pi^{*}(\mathcal{T}_{0,4})$. A contradiction is reached. 
If $\sys(X_0) = \sys(X')$, then
 $\mathrm{S}(X')$ is a proper subset of $\{ \alpha, \beta, \gamma \}$. 
 By Lemma \ref{lem:max_abc}, $X'$ is not a point realizing the maximum of the systole function in $\pi^{*}(F_0)$, hence $X_0$ is also not. A contradiction is reached, and the proof is complete. 
\end{proof}
\subsection{The main theorem}
Assume the $X_0\in \pi^{*}(F_0)$ realizing the maximum of the systole function in $\pi^{*}(\mathcal{T}_{0,4})$ has Fenchel-Nielsen coordinate $(c_M, t_M)$ and $u_M = \frac{t_M}{c_M}$. 
Consider the Fenchel-Nielsen coordinate $(c,t)$ and $u= \frac{t}{c}$. Lemma \ref{lem:u_implicit} provides a curve in $(c,u) \in (0,+\infty)\times (0,1]$, parametrized by the function $u_1:[c_1, +\infty)\to (0,1]$, consisting of the points satisfying that
\[
 \ell_{\beta_i}(X) = \ell_{\gamma_i}(X)
\]
for $i =1,2,...,g+1$, when $(c,u)\in (0,+\infty)\times (0,1]$. 

Recall the Teichm\"uller geodesic $L_{-1,1}$ consists of the points $X$ such that
\[
 \ell_{\alpha_i}(X) = \ell_{\gamma_i}(X)
\]
for $i =1,2,...,g+1$. 
Similar to Lemma \ref{lem:u_implicit}, one can show that
\begin{lemma}
 There is a function $u_0:[c_0, +\infty)\to [0,1)$ such that its graph $(c, u_0(c))$ is the geodesic $L_{-1,1}$ in $\pi^{*}(F_0)$. 
 \label{lem:u0_implicit}
\end{lemma}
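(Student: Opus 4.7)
The plan is to mirror the proof of Lemma~\ref{lem:u_implicit}, with $\ell_{\beta_i}-\ell_{\gamma_i}$ replaced by $\ell_{\alpha_i}-\ell_{\gamma_i}$, and then to apply the implicit function theorem. Concretely, I would introduce
\[
G(c,u) \;\overset{\mathrm{def}}{=}\; \ell_{\alpha_i}(c, cu) - 2c
\]
on the strip $(0,+\infty)\times[0,1]$. By (\ref{for:gamma}), $G = \ell_{\alpha_i}-\ell_{\gamma_i}$, so the zero set of $G$ inside the region $\{0\le t\le c\}$ is exactly $L_{-1,1}\cap \pi^{*}(F_0)$; and (\ref{for:pd_alpha}) yields the crucial monotonicity $\partial_u G = c\,\partial_t\ell_{\alpha_i} > 0$.

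Next I would analyze $G$ on the two horizontal sides of the strip. On $u=0$ one has $G(c,0)=2s(c)-2c$, where $s(c)$ is determined by (\ref{for:seam_cuff}) and is strictly decreasing in $c$ with $s\to +\infty$ as $c\to 0^{+}$ and $s\to 0$ as $c\to +\infty$. Hence $G(\,\cdot\,,0)$ has exactly one zero, the unique $c_0$ satisfying $\sinh^{2}(c_0/2) = \cos(\pi/(g+1))$, with $G(c,0) < 0$ precisely for $c > c_0$. On $u=1$, (\ref{for:alpha}) gives
\[
\cosh\bigl(\ell_{\alpha_i}(c,c)/4\bigr) \;=\; \cosh(s/2)\cosh(c/2) \;>\; \cosh(c/2),
\]
so $G(c,1) > 0$ for every $c>0$.

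Combining these with $\partial_u G>0$: for each $c\ge c_0$ the intermediate value theorem produces a unique $u_0(c)\in[0,1)$ with $G(c,u_0(c))=0$ (and $u_0(c_0)=0$); for $c<c_0$ the monotonicity forces $G(c,u)\ge G(c,0)>0$ throughout the strip, so no solution exists. The implicit function theorem, applied at any zero using $\partial_u G\neq 0$, then furnishes smoothness of $u_0$. Finally, the identification $c = c_\alpha \iff \ell_{\gamma_i}=\ell_{\alpha_i} \iff G=0$ shows that the graph of $u_0$ is precisely $L_{-1,1}\cap \pi^{*}(F_0)$.

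The one genuinely new ingredient compared to Lemma~\ref{lem:u_implicit} is the analysis of the two horizontal boundaries: producing the correct threshold $c_0$ at $u=0$ and checking positivity at $u=1$. Both reduce to direct one-line computations using (\ref{for:seam_cuff}) and (\ref{for:alpha}), after which the rest of the argument is mechanically parallel to Lemma~\ref{lem:u_implicit}.
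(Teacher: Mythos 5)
Your proposal is correct and follows the same overall strategy as the paper: the same auxiliary function $G(c,u)=\ell_{\alpha_i}(c,cu)-2c$, the same monotonicity $\partial_u G>0$ from (\ref{for:pd_alpha}), and the implicit function theorem. The one place you depart from the paper is in pinning down the range $[0,1)$. The paper dispatches this by citing Lemma~\ref{lem:l_pm1}, which uses the Gromov-boundary description of the ends of $L_{-1,1}$ to conclude that $L_{-1,1}\subseteq F$ and hence stays off the edge $u=1$. You instead compute both horizontal boundaries of the strip directly: $G(c,0)=2s(c)-2c$, strictly decreasing from $+\infty$ to $-\infty$, producing the unique threshold $c_0$ (equivalently $\sinh^2(c_0/2)=\cos(\pi/(g+1))$, matching (\ref{for:cc_alpha_t0})); and $G(c,1)>0$ for all $c$ via (\ref{for:alpha}). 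Combined with $\partial_u G>0$, the intermediate value theorem and monotonicity then yield existence, uniqueness, the threshold, and $u_0(c)<1$ all at once, without invoking the Gromov boundary. This is a slightly more elementary and self-contained route; the paper's route has the advantage of reusing a lemma already established for other purposes. One small thing worth making explicit in your write-up: to identify the graph with $L_{-1,1}\cap\pi^*(F_0)$ you should observe that on $L_{-1,1}$ one has $c_\alpha=c$ and $t_\alpha=-t$, so the remaining constraint $|t_\alpha|\le c_\alpha$ from the definition of $F$ reduces to $u\le 1$ and is therefore automatic; you gesture at this but it deserves one sentence.
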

\begin{proof}
 By (\ref{for:cc_alpha_t0}), there is a unique point $(c_0,0)$ on the geodesic $\left\{ u=0 \right\} $ such that $\ell_{\gamma_i} = \ell_{\alpha_i}$. 
 Similar to Lemma \ref{lem:bc_eqs} $(3)$, by (\ref{for:pd_alpha}), for $(c,u)\in (0,+\infty) \times (0,1)$, if $\ell_{\alpha_i}(c,cu) = \ell_{\gamma_i}(c,cu)$, then  $c> c_0$. 
 Consider the function
 \[
  G(c,u) \overset{\mathrm{\mathrm{def}}}{=} \ell_{\alpha_i}(c,cu) - \ell_{\gamma_i}(c,cu) = \ell_{\alpha_i}(c,cu) - 2c
 \]
 defined on $(c,u)\in (0,+\infty) \times [0,1]$. 
 By (\ref{for:pd_alpha}), one may get that
 \[
  \frac{\partial G}{\partial u} (c,u) > 0 
 \]
 for $(c,u)\in (0,+\infty) \times [0,1]$. 
 Then by the implicit function theorem, there is a function 
 \[
  u_0: [c_0, +\infty) \to [0,1)
 \]
 such that 
 \[
  G(c, u_0(c)) = 0. 
 \]
 The range $u_0([c_0,+\infty))$ follows from Lemma \ref{lem:l_pm1}. The proof is complete.

\end{proof}

Now we are ready to state the main theorem. 
Let 
\begin{eqnarray*}
 \Gamma(u_1([c_1,c_{M}])) &\overset{\mathrm{def}}{=}& \left\{ (c, u )\,|\, c\in [c_1,c_{M}], u = u_1(c) \right\}; \\
 \Gamma(u_0([c_0,c_{M}])) &\overset{\mathrm{def}}{=}& \left\{ (c, u )\,|\, c\in [c_0,c_{M}], u= u_0(c)\right\}.
\end{eqnarray*}
\begin{theorem}
 \[
  \pi^{*}(F_0) \cap \mathcal{P}_g = \Gamma(u_1([c_1, c_M]))\cup \Gamma(u_0([c_0, c_M])). 
 \]
 \label{thm:main}
\end{theorem}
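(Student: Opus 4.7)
The plan is to establish both inclusions of the claimed equality, working in the $(c,u)$-coordinates on $\pi^{*}(F_0)$ with $u = t/c$.

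For the forward inclusion $\pi^{*}(F_0)\cap \mathcal{P}_g \subseteq \Gamma(u_1([c_1,c_M])) \cup \Gamma(u_0([c_0,c_M]))$, fix $X$ in the left-hand side. By Proposition \ref{prop:sys_cand} together with Lemma \ref{lem:section}, one has $\gamma \in \mathrm{S}(X) \subseteq \{\alpha,\beta,\gamma\}$. Since no single multi-geodesic among $\alpha$, $\beta$, $\gamma$ fills the surface, the filling condition on $\mathrm{S}(X)$ forces at least one of $\alpha$, $\beta$ into $\mathrm{S}(X)$. The case $\beta \in \mathrm{S}(X)$ gives $\ell_\beta(X) = \ell_\gamma(X)$, placing $X$ on the graph of $u_1$ from Lemma \ref{lem:u_implicit}; the case $\alpha \in \mathrm{S}(X)$ gives $\ell_\alpha(X) = \ell_\gamma(X)$, placing $X$ on $L_{-1,1}$, which is the graph of $u_0$ by Lemma \ref{lem:u0_implicit}. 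The lower bound $c \geq c_1$ on $\Gamma(u_1)$ is immediate from Lemma \ref{lem:bc_eqs}. For $c \leq c_M$: since $X \in F_0$ forces $\ell_\gamma \leq \ell_\alpha$, and by Proposition \ref{prop:unique} the graphs of $u_0$ and $u_1$ meet exactly at $(c_M, u_M)$, the monotonicity (\ref{for:pd_alpha}) pins the sign of $\ell_\alpha - \ell_\gamma$ along $u = u_1(c)$ to be non-negative precisely for $c \leq c_M$. The argument for $\Gamma(u_0)$ is analogous, with $c_0$ arising at the boundary $u = 0$ via (\ref{for:cc_alpha_t0}).

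For the reverse inclusion, take $X \in \Gamma(u_1([c_1,c_M]))$. Then $\ell_\beta(X) = \ell_\gamma(X) = 2c$, and the monotonicity argument just given shows $\ell_\alpha(X) \geq 2c$, hence $X \in \pi^{*}(F_0)$. By Lemma \ref{lem:family_sys_cand} together with the pentagon classification from the proof of Proposition \ref{prop:four_sys_cand} (the portion which only uses $X \in \pi^{*}(F_0)$ and not $X \in \mathcal{P}_g$), $\mathrm{S}(X) \subseteq \{\alpha,\beta,\gamma,\delta\}$. To conclude $X \in \mathcal{P}_g$ it suffices to show $\ell_\delta(X) \geq 2c$: this puts $\{\beta,\gamma\} \subseteq \mathrm{S}(X)$, and $\{\beta,\gamma\}$ is a filling pair, as $\beta \cup \gamma$ cuts each of the two $(g+1)$-holed spheres in the construction of Section \ref{sec:pre} into polygonal disks. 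The inequality $\ell_\delta \geq \ell_\gamma$ on the arc is exactly the numerical bound packaged in Lemma \ref{lem:delta_not}: the conditions $c \in [c_1,c_M]$ and $u = u_1(c) \geq 1/2$ (which holds by monotonicity of $u_1$ and Lemma \ref{lem:u_level}, together with $c_M \leq c_{1/2}$) place the arc inside the region where the explicit length formulas (\ref{for:seam_cuff}), (\ref{for:delta_even}), (\ref{for:delta_odd}) force $\ell_\delta > 2c$. The arc $\Gamma(u_0([c_0,c_M]))$ is handled identically with $\alpha$ in place of $\beta$.

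The main obstacle is the endpoint identification: proving that the ranges $[c_1,c_M]$ and $[c_0,c_M]$ are sharp. This couples the local implicit-function description of the arcs (Lemmas \ref{lem:u_implicit} and \ref{lem:u0_implicit}) to the global uniqueness of the three-systole point $X_0$ (Proposition \ref{prop:unique}); it is this uniqueness that permits the sign of $\ell_\alpha - \ell_\gamma$ (respectively $\ell_\beta - \ell_\gamma$) along each arc to be determined by a single boundary evaluation and then transported by continuity. Once the ranges are pinned down, the $\ell_\delta$ estimate is a direct reuse of Lemma \ref{lem:delta_not}.
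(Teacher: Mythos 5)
Your two-inclusion strategy matches the paper's in all essentials: both directions ultimately reduce to the length inequalities along the arcs $\ell_\alpha = \ell_\gamma$ (graph of $u_0$) and $\ell_\beta = \ell_\gamma$ (graph of $u_1$), Lemma~\ref{lem:bc_eqs} gives the lower endpoints, and Proposition~\ref{prop:unique} pins the upper endpoint $c_M$ and the sign of the third length. Your forward inclusion is phrased as a systole analysis and the paper's is phrased as a boundary/interior case split of $\pi^*(F_0)$, but these are the same argument in different clothes.

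There is, however, one unjustified assertion in your reverse inclusion that deserves to be flagged. You claim $u_1(c) \ge \tfrac{1}{2}$ on $[c_1,c_M]$ via ``monotonicity of $u_1$'' and ``$c_M \le c_{1/2}$.'' Neither is established: Lemma~\ref{lem:u_implicit} gives $\partial F/\partial u < 0$, not monotonicity of $u_1$ in $c$, and the paper never compares $c_M$ to $c_{1/2}$ (Lemma~\ref{lem:u_level} only bounds $c_{1/2}$, it says nothing about $c_M$). Fortunately the claim is not needed. If a point on the arc has $u_1(c) < \tfrac12$, then by the argument giving~(\ref{for:ad}) one gets $\ell_{\delta_j} > \ell_{\alpha_i}$, and since $X \in \pi^*(F_0)$ forces $\ell_{\alpha_i} \ge \ell_{\gamma_i} = 2c$, the bound $\ell_{\delta_j} > 2c$ follows anyway; the numerical estimate of Lemma~\ref{lem:delta_not} is only required in the complementary region $u \ge \tfrac12$. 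Even more simply, the whole $\delta$-exclusion step is subsumed by the second assertion of Lemma~\ref{lem:section}: for any $X \in \pi^*(F_0)$ one already has $\gamma \in \mathrm{S}(X)$, hence $\ell_{\gamma_i} \le \ell_{\delta_j}$ automatically, and along $\Gamma(u_1)$ the systole set contains the filling pair $\{\beta,\gamma\}$ without any explicit $\delta$ estimate. This is effectively what the paper's proof does, which is why it never mentions $\delta$ in this direction.
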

\begin{proof}
 By (\ref{for:t=0}), the point $(c_0,0) \in \mathcal{P}_g$. More precisely, we have
 \[
  \ell_{\alpha_i}(c_0,0) = \ell_{\gamma_i}(c_0,0) < \ell_{\beta_i}(c_0,0)
 \]
 for $i =1,2,..., g+1$. 
 For $c \in [c_0,c_M)$ the inequality 
 \[
  \ell_{\alpha_i}(c, u_0(c)\cdot c) = \ell_{\gamma_i}(c,u_0(c)\cdot c) < \ell_{\beta_i}(c,u_0(c)\cdot c)
 \]
 for $i =1,2,..., g+1$ 
 always holds by Proposition \ref{prop:unique}. Hence $(c,u_0(c)) \in \mathcal{P}_g$. Notice that $(c,u_0(c))\in L_{-1,1}\cap \pi^{*}(F_0) \subseteq \pi^{*}(F_0)$. 

 On the other hand, by (\ref{for:t=c}), the point $(c_1,1) \in \mathcal{P}_g$. More precisely, we have
 \[
  \ell_{\beta_i}(c_1,1\cdot c_1) = \ell_{\gamma_i}(c_1,1\cdot c_1) < \ell_{\alpha_i}(c_1,1\cdot c_1)
 \]
 for $i =1,2,..., g+1$. 
 For $c \in [c_1,c_M)$ the inequality 
 \[
  \ell_{\beta_i}(c,u_1(c)\cdot c) = \ell_{\gamma_i}(c,u_1(c)\cdot c) < \ell_{\alpha_i}(c,u_1(c)\cdot c)
 \]
 for $i =1,2,..., g+1$
 always holds by Proposition \ref{prop:unique}. Hence $(c,u_1(c)) \in \mathcal{P}_g$. By Lemma \ref{lem:u_implicit} and Proposition \ref{prop:unique}, $(c,u_1(c)) \in \pi^{*}(F_0)$. 
 Therefore we have proved $\Gamma(u_1([c_1, c_M]))\cup \Gamma(u_0([c_0, c_M])) \subseteq  \pi^{*}(F_0) \cap \mathcal{P}_g$. 

 The last thing is to show $\Gamma(u_1([c_1, c_M]))\cup \Gamma(u_0([c_0, c_M])) =  \pi^{*}(F_0) \cap \mathcal{P}_g$. Recall the domain $\pi^{*}(F_0)$ is bounded by three geodesics $L_0 = \left\{ u=0 \right\} $, $L_1 = \left\{ u=1 \right\}$ and $L_{-1,1} = \left\{ \ell_{\alpha_i} = \ell_{\gamma_i} \right\} $. By Lemma \ref{lem:bc_eqs} and Lemma \ref{lem:u0_implicit}, one may have have
 \[
  L_0 \cap \mathcal{P}_g = \left\{ (c_0,0) \right\} ; L_1 \cap \mathcal{P}_g = \left\{ (c_1,1\cdot c_1) \right\}. 
 \]
By Proposition \ref{prop:unique}, for $c > c_M$, one may have
 \[
  \ell_{\alpha_i}(c,u_0(c)\cdot c) = \ell_{\gamma_i}(c,u_0(c)\cdot c) > \ell_{\beta_i}(c,u_0(c)\cdot c)
 \]
 for $i =1,2,..., g+1$. Therefore, one may have
 \[
  L_{-1,1} \cap \mathcal{P}_g \cap \pi^{*}(F_0) = \Gamma(u_0([c_0,c_M])). 
 \]
 For $(c,t)$ and $u = \frac{t}{c}$ in the interior of $\pi^{*}(F_0)$, one may have
 \[
  \ell_{\alpha_i}(c,u\cdot c) > \ell_{\gamma_i}(c,u\cdot c). 
 \]
 For any fixed $c>0$, there is at most one $u\in (0,1]$ satisfying 
 \[
  \ell_{\beta_i}(c,u\cdot c) = \ell_{\gamma_i}(c,u\cdot c). 
 \]
 This point is the point $(c,u_1(c))$ when  $c \ge c_1$. This point is in $\pi^{*}(F_0)$ if and only if $c \le c_{M}$ by Proposition \ref{prop:unique}. 
 Therefore $\Gamma(u_1([c_1, c_M]))\cup \Gamma(u_0([c_0, c_M])) =  \pi^{*}(F_0) \cap \mathcal{P}_g$. The proof is complete. 

\end{proof}

By the reflection $r_0$, $r_1$, one may get $\mathcal{P}_g \cap \pi^{*}(F)$, as illustrated in Figure \ref{fig:fund_dom_2}. 
By the action of $\pmodo_{0,4} = \left< D_{\alpha}, D_{\gamma} \right>$, one may get the Thurston spine in $\pi^{*}(\mathcal{T}_{0,4})$ and $\pi^{*}(\mathcal{M}_{0,4})$, as illustrated in Figure \ref{fig:spine}.

One may see that
\begin{corollary}

	For the Teichm\"uller curve $\pi^{*}(\mathcal{T}_{0,4})$ and the Thurston spine $\mathcal{P}_g$, the intersection  $\pi^{*}(\mathcal{T}_{0,4}) \cap \mathcal{P}_g$ is an equivariant deformation retract of $\pi^{*}(\mathcal{T}_{0,4})$. 
 \label{cor:conj}
\end{corollary}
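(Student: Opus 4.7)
The plan is to construct a deformation retract on the fundamental domain $\pi^*(F_0)$ that is compatible with the boundary identifications, and then extend equivariantly over the whole Teichm\"uller curve. By Proposition \ref{prop:fundamental_curve}, $\pi^*(F_0)$ is a fundamental domain for the stabilizer $G \subseteq \modo_g^{\pm}$ of $\pi^*(\mathcal{T}_{0,4})$, and by Theorem \ref{thm:main} the set $\pi^*(F_0) \cap \mathcal{P}_g$ is an embedded path from $(c_0, 0)$ to $(c_1, c_1)$ through $X_0$, consisting of the sub-arc $\Gamma(u_0([c_0, c_M])) \subseteq L_{-1,1}$ on the boundary together with the interior arc $\Gamma(u_1([c_1, c_M]))$.

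First I would record how $\partial \pi^*(F_0)$ behaves in the quotient: each of its three bounding geodesics is fixed pointwise by an order-two element of $G$, namely $L_0$ by $r_0$, $L_{-1,1}$ by $r_1$, and $L_1$ by the composition $D_\gamma \circ r_0$ (which sends $(c,c) \mapsto (c,-c) \mapsto (c,c)$ by the formulas in the proof of Lemma \ref{lem:geod}). Equivariance therefore forces any valid retract to preserve each of these three geodesics set-wise. I would accordingly prescribe the retract on $\partial \pi^*(F_0)$ first: collapse $L_0$ onto the endpoint $(c_0, 0)$, collapse both subarcs of $L_1$ symmetrically onto $(c_1, c_1)$ using arc-length parametrization (making the result $D_\gamma \circ r_0$-invariant), and on $L_{-1,1}$ fix $\Gamma(u_0([c_0, c_M]))$ pointwise while collapsing the complementary ray from $X_0$ toward the ideal vertex $1$ onto $X_0$.

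Second, the interior arc $\Gamma(u_1([c_1, c_M]))$ cuts the ideal triangle $\pi^*(F_0)$ into two topological disks $A$ and $B$, each of which now has a specified continuous deformation retract of its boundary onto a subarc of $\pi^*(F_0) \cap \mathcal{P}_g$. A standard disk-extension argument, or an explicit geometric construction foliating $A$ and $B$ by geodesic segments transverse to $\Gamma(u_1([c_1, c_M]))$ and projecting along them, produces a continuous deformation retract of each disk that extends the boundary data. Assembling yields a map $H_0 \colon \pi^*(F_0) \times [0,1] \to \pi^*(F_0)$ that deformation-retracts $\pi^*(F_0)$ onto $\pi^*(F_0) \cap \mathcal{P}_g$ and is invariant under each of $r_0$, $r_1$, and $D_\gamma \circ r_0$.

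Finally, I would extend equivariantly by the formula $H(g(y), s) := g(H_0(y, s))$ for $g \in G$ and $y \in \pi^*(F_0)$. Compatibility of neighbouring translates along their shared boundary geodesics follows from the boundary invariance of $H_0$, so $H$ is a well-defined continuous $G$-equivariant deformation retract of $\pi^*(\mathcal{T}_{0,4})$ onto $\pi^*(\mathcal{T}_{0,4}) \cap \mathcal{P}_g$. The main obstacle will be matching the disk retracts on $A$ and $B$ continuously across $L_1$ near the parabolic ideal vertex $\infty$; the natural fix is to arrange the retract on a cusp neighbourhood to preserve horocycles based at $\infty$, so that it is automatically compatible with the parabolic identification generated by $D_\gamma$.
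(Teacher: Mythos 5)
The paper offers no argument for this corollary beyond the phrase ``One may see that,'' so there is no written proof to compare against. Your approach---prescribe a boundary-compatible deformation retract on the fundamental triangle $\pi^{*}(F_0)$ and extend by the group---is the natural way to fill this gap, and its outline is correct. Two remarks. First, $\pi^{*}(F_0)$ is not an ideal triangle: its sides $L_0$ and $L_{-1,1}$ meet at the finite point $(c_0,0)$, where by (\ref{for:cc_alpha_t0}) one has $\sinh^2(c_0/2)=\cos\frac{\pi}{g+1}$; the stabilizer of this vertex is $\langle r_0, r_1\rangle \cong \mathbb{Z}/2 \times \mathbb{Z}/2$, so one should check that $H_0$ fixes $(c_0,0)$ (it does, since $(c_0,0)$ is the endpoint of $\Gamma(u_0)$ and hence in the spine). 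Second, the ``main obstacle'' you flag near the cusp at $\infty$ is not a genuine difficulty: the disks $A$ and $B$ are glued to each other along $\Gamma(u_1([c_1,c_M]))$, not along $L_1$, and across $L_1$ the adjacent tile is the translate $D_\gamma r_0(\pi^{*}(F_0))$ --- since $D_\gamma \circ r_0$ fixes $L_1$ \emph{pointwise}, equivariant compatibility across $L_1$ requires nothing beyond $H_0(L_1 \cap F_0, s) \subseteq L_1$, which you already impose; no special ``symmetric arc-length'' choice or horocycle-preserving normalization is needed, and the tiling is locally finite at every point of $\pi^{*}(\mathcal{T}_{0,4})$ (the cusp itself is not in the domain). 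With these clarifications the argument is sound, and it is consistent with the trivalent-tree picture claimed in the abstract.
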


In Figure \ref{fig:spine}, one circle realizes $D_{\alpha}$ and the other circle realizes $D_{\gamma}$. Then we have that
\begin{corollary}
	Any element in $\left<D_{\alpha}, D_{\gamma} \right> \subseteq \modo_g$ can be realized as an essential loop in $q(\pi^{*}(\mathcal{T}_{0,4})\cap \mathcal{P}_g) \subseteq \mathcal{M}_g$. 
	\label{cor:loop}
\end{corollary}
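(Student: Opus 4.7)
The plan is to realize each element $\phi \in \langle D_\alpha, D_\gamma\rangle$ as the holonomy of a path inside the spine intersection joining $X_0$ to $\phi(X_0)$ for a fixed basepoint $X_0$. The two ingredients I will use are $\langle D_\alpha, D_\gamma\rangle$-invariance of $\pi^{*}(\mathcal{T}_{0,4}) \cap \mathcal{P}_g$ and its path-connectedness, both of which are essentially already in hand.

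Concretely, I would first fix a basepoint $X_0 \in \pi^{*}(\mathcal{T}_{0,4}) \cap \mathcal{P}_g$ -- for example the unique three-systole point supplied by Proposition \ref{prop:unique} -- and set $x_0 \overset{\mathrm{def}}{=} q(X_0) \in \mathcal{M}_g$. Since $\mathcal{P}_g$ is defined by the topological condition that the set of systoles fills the surface, the whole group $\modo_g$ preserves $\mathcal{P}_g$; in particular the subgroup $\langle D_\alpha, D_\gamma\rangle \subseteq \modo_g$ preserves $\pi^{*}(\mathcal{T}_{0,4}) \cap \mathcal{P}_g$, so $\phi(X_0)$ again lies in that set. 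Next, Corollary \ref{cor:conj} says $\pi^{*}(\mathcal{T}_{0,4}) \cap \mathcal{P}_g$ is a deformation retract of the contractible space $\pi^{*}(\mathcal{T}_{0,4}) \cong \mathbb{H}^2$, and in particular is path-connected; pick a path $\Gamma_\phi$ in $\pi^{*}(\mathcal{T}_{0,4}) \cap \mathcal{P}_g$ from $X_0$ to $\phi(X_0)$. The projection $\ell_\phi \overset{\mathrm{def}}{=} q \circ \Gamma_\phi$ is then a based loop at $x_0$ whose image lies in $q(\pi^{*}(\mathcal{T}_{0,4}) \cap \mathcal{P}_g)$.

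To identify the mapping class represented by $\ell_\phi$, I would invoke the standard description $\pi_1^{\mathrm{orb}}(\mathcal{M}_g, x_0) \cong \modo_g$ coming from the orbifold cover $q : \mathcal{T}_g \to \mathcal{M}_g$: a based loop at $x_0$ corresponds to the unique element of $\modo_g$ sending $X_0$ to the endpoint of its $\mathcal{T}_g$-lift starting at $X_0$. By construction $\Gamma_\phi$ is itself such a lift and ends at $\phi(X_0)$, so $\ell_\phi$ represents $\phi$; essentiality is automatic whenever $\phi \neq 1$.

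The step that I expect to require the most care -- rather than any genuine difficulty -- is the identification of the copy of $\pmodo_{0,4}$ acting via $\pi^{*}$ on $\mathcal{T}_{0,4}$ with the subgroup $\langle D_\alpha, D_\gamma\rangle \subseteq \modo_g$ featuring in the statement: here the symbol $D_\alpha$ must be read as its canonical lift to a product of Dehn twists along $\alpha_1,\ldots,\alpha_{g+1}$ commuting with the order-$(g+1)$ rotation $\rho$, and similarly for $D_\gamma$. Once that bookkeeping is fixed, the $\modo_g$-invariance of $\mathcal{P}_g$, the path-connectedness of its intersection with the Teichm\"uller curve, and the basepoint-lift description of $\pi_1^{\mathrm{orb}}$ are all either immediate or already established above.
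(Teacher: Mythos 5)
Your proposal is correct and takes essentially the same approach as the paper. The paper's justification is the one-line observation preceding the corollary — that the two circles in Figure~\ref{fig:spine} realize $D_{\alpha}$ and $D_{\gamma}$, so any word in them is realized by concatenation — and this is the concrete instance of the lifting argument you carry out abstractly via $\langle D_{\alpha}, D_{\gamma}\rangle$-invariance of $\pi^{*}(\mathcal{T}_{0,4})\cap\mathcal{P}_g$ together with the path-connectedness supplied by Corollary~\ref{cor:conj}.
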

The group $\left<D_{\alpha}, D_{\gamma} \right> \subseteq \modo_g$ contains both reducible and pseudo-Anosov elements, see \emph{e.g.} \cite{thurston1988geometry}.

 \bibliographystyle{alpha}
 \bibliography{spine_example}
\end{document}